\newtheorem{theorem}{Theorem}[section]
\newtheorem{proposition}[theorem]{Proposition}
\newtheorem{lemma}[theorem]{Lemma}
\newtheorem{remark}[theorem]{Remark}
\def\cA{\mathcal A}
\def\cC{\mathcal C}
\def\cH{\mathcal H}
\def\cX{\mathcal X}
\def\fqs{\mathbb{F}_{q^2}}
\def\PG{{\rm{PG}}}
\def\SL{{\rm{SL}}}
\def\ord{\mbox{\rm ord}}
\def\det{\mbox{\rm det}}
\def\Ker{\mbox{\rm Ker}}
\def\bq{{\bar q}}
\newcommand{\PSL}{\mbox{\rm PSL}}
\newcommand{\PGL}{\mbox{\rm PGL}}
\newcommand{\PSU}{\mbox{\rm PSU}}
\newcommand{\PGU}{\mbox{\rm PGU}}
\newcommand{\aut}{\mbox{\rm Aut}}
\newcommand{\diag}{\mbox{\rm diag}}
\title{Quotients of the Hermitian curve from subgroups of $\PGU(3,q)$ without fixed points or triangles}
\date{}
\author{Maria Montanucci and Giovanni Zini }
\begin{document}

%\vspace{0.5cm}\noindent {\em Keywords}:
%Algebraic curves, algebraic function fields, positive characteristic, automorphism groups.
%\vspace{0.2cm}\noindent

%\vspace{0.5cm}\noindent {\em Subject classifications}:
%\vspace{0.2cm}\noindent  14H37, 14H05.

\begin{abstract}
%The $\mathbb{F}_{q^2}$-maximal Hermitian curve $\cH_q$ is a leading example of a maximal curve over a finite field.
%Several papers in the literature compute the genera of quotient curves $\cH_q/G$ where $G$ is an automorphism group of $\cH_q$.
In this paper we deal with the problem of classifying the genera of quotient curves $\mathcal{H}_q/G$, where $\mathcal{H}_q$ is the $\mathbb{F}_{q^2}$-maximal Hermitian curve and $G$ is an automorphism group of $\cH_q$.
The groups $G$ considered in the literature fix either a point or a triangle in the plane $\PG(2,q^6)$.
%In all these papers $G$ either fixes a point or a triangle in $\PG(3,q^6)$.
%In this paper, we give a complete list of genera of quotients $\cH_q/G$, when $G \leq \aut(\cH_q) \cong \PGU(3,q)$ does not leave invariant a point or triangle in $\PG(2,q^6)$.
In this paper, we give a complete list of genera of quotients $\cH_q/G$, when $G \leq \aut(\cH_q) \cong \PGU(3,q)$ does not leave invariant any point or triangle in the plane.
%As a result, the classification of all subgroups $G$ of $\PGU(3,q)$ satisfying this property is given up to isomorphisms.
As a result, the classification of subgroups $G$ of $\PGU(3,q)$ satisfying this property is given up to isomorphism.
\end{abstract}

\maketitle

\begin{small}

{\bf Keywords:} Hermitian curve, unitary groups, quotient curves, maximal curves

{\bf 2000 MSC:} 11G20

\footnote{
This research was partially supported by Ministry for Education, University
and Research of Italy (MIUR)
 and by the Italian National Group for Algebraic and Geometric Structures
and their Applications (GNSAGA - INdAM).

URL: Maria Montanucci (maria.montanucci@unibas.it), Giovanni Zini (giovanni.zini@unimib.it).
}

\end{small}

\section{Introduction}

Let $\mathbb{F}_q$ be a finite field of order $q$ and $\cX$ be a projective, irreducible, non-singular algebraic curve of genus $g$ defined over $\mathbb{F}_{q}$. The problems of determining the maximum number of points over $\mathbb{F}_{q}$ that $\cX$ can have and finding examples of algebraic curves $\cX$ with many rational points have been important, not only from the theoretic perspective, but also for applications in Coding Theory; see for instance \cite{GS,vdG,vdG2}.
The Hasse-Weil Theorem provides an upper bound for the number of rational points $|\cX(\mathbb{F}_q)|$ that a curve $\cX$ defined over $\mathbb{F}_q$ can have, namely $|\cX(\mathbb{F}_q)| \leq q+1+2g \sqrt{q}$. If $|\cX(\mathbb{F}_q)| =q+1+2g \sqrt{q}$, then the curve $\cX$ is said to be $\mathbb{F}_q$-maximal. Clearly, $\cX$ can be $\mathbb{F}_q$-maximal only if either $g$ is zero or $q$ is a square.
A natural question in this context is: over a finite field $\mathbb{F}_{q^2}$ of square cardinality, which non-negative integers $g$ can be realized as the genera of maximal curves over $\mathbb{F}_{q^2}$?

A leading example of a maximal curve is the Hermitian curve $\cH_q$ over $\mathbb{F}_{q^2}$, where $q$ is a power of a prime $p$. It is defined as the non-singular plane curve admitting one of the following birational equivalent plane models: $X^{q+1}+Y^{q+1}+Z^{q+1}=0$ and $X^qZ+XZ^q=Y^{q+1}$.

For fixed $q$, the curve $\cH_q$ has the largest genus $g(\cH_q)=q(q-1)/2$ that an $\mathbb{F}_{q^2}$-maximal curve can have; see \cite{GSX} and the references therein. The full automorphism group $\aut(\cH_q)$ is isomorphic to $\PGU(3,q)$, the group of projectivities of ${\rm PG}(2,q^2)$ commuting with the unitary polarity associated with $\cH_q$. The automorphism group $\aut(\cH_q)$ is extremely large with respect to the value $g(\cH_q)$. Indeed it is know that the Hermitian curve is the unique curve of  genus $g \geq 2$ up to isomorphisms admitting an automorphism group of order at least $16g^4$. 

By a result commonly referred to as the Kleiman-Serre covering result, see \cite{KS} and \cite[Proposition 6]{L}, a curve defined over $\mathbb{F}_{q^2}$ which is $\mathbb{F}_{q^2}$-covered  by an $\mathbb{F}_{q^2}$-maximal curve is $\mathbb{F}_{q^2}$-maximal as well. In particular, $\mathbb{F}_{q^2}$-maximal curves can be obtained as Galois $\mathbb{F}_{q^2}$-subcovers of an $\mathbb{F}_{q^2}$-maximal curve $\cX$, that is, as quotient curves $\cX/G$ where $G$ is a finite $\mathbb{F}_{q^2}$-automorphism group of $\cX$. Since $\aut(\cH_q)$ is large and has plenty of non-conjugated subgroups, it seems to be natural to construct maximal curves as Galois $\mathbb{F}_{q^2}$-subcovers $\cH_q/G$ of the Hermitian curve $\cH_q$ with $G \leq \PGU(3,q)$. Indeed, most of the known maximal curves are constructed in this way; see for instance \cite{GSX,CKT2,GHKT2,MZ2,MZ} and the references therein.
The most significant cases treated in the literature are the following:
\begin{itemize}
\item $G$ fixes an $\mathbb{F}_{q^2}$-rational point of $\cH_q$; see \cite{BMXY,GSX,AQ}.
\item $G$ normalizes a Singer subgroup of $\cH_q$ acting on three $\mathbb{F}_{q^6}$-rational points of $\cH_q$; see \cite{GSX,CKT}.
\item $G$ has prime order; see \cite{CKT2}.
\item $G$ fixes an $\mathbb{F}_{q^2}$-rational point off $\cH_q$ see \cite{MZ}. 
\end{itemize}
From these results, in order to obtain the complete list of genera of quotients $\cH_q/G$ of the Hermitian curve $\cH_q$, only the following cases for $G\leq\aut(\cH_q)$ still have to be described and characterized completely:
\begin{enumerate}
\item $G$ fixes a self-polar triangle in ${\rm PG}(2,q^2)$,
\item $G$ fixes a point $P \in \cH_q(\mathbb{F}_{q^2})$, $p=2$ and $|G|=p^\ell d$ where $p^\ell \leq q$ and $d \mid (q-1)$,
\item $G$ fixes an $\mathbb{F}_{q^2}$-rational point $P\notin\cH_q$,
\item $G$ does not fix any point or triangle in $\PG(2,q^6)$.
\end{enumerate}
The structure of maximal subgroups of $\PGU(3,q)$ fixing a point or a triangle in $\PG(2,q^6)$ is known, see \cite{M,H}; yet, the structure of subgroups $\PGU(3,q)$ fixing no points nor triangles is not explicitly described when $3 \mid (q+1)$, that is, when $\PGU(3,q)\ne\PSU(3,q)$.
Also, the groups $G\leq\PGU(3,q)$ considered in the literature always have a fixed point or a fixed triangle in $\PG(2,q^6)$ (with the single exception of \cite[Proposition 5.1]{MZ2}).

In this paper we give the complete list of genera of quotients $\cH_q/G$ when $G$ does not fix any point or triangle in $\PG(2,q^6)$. To this aim we also provide the list of such subgroups of $\PGU(3,q)$ up to isomorphisms, and we give explicitly the list of maximal subgroups of $\PGU(3,q)$, which was already provided in \cite{M,H} for $3\nmid(q+1)$.
%Furthermore, as a corollary, we obtain the list of maximal subgroups of $\PGU(3,q)$ which was already obtained by Mitchell and Hartley when $3 \nmid (q+1)$.

The organization of the paper is the following. Section \ref{sec:preliminari} provides preliminary results on the structure of $\PGU(3,q)$ and $\PSU(3,q)$, and on the Riemann-Hurwitz formula applied to quotients of the Hermitian curve. Section \ref{sec:sottogruppimassimaliPGU} contains a description of the maximal subgroups of $\PGU(3,q)$ in relation to the maximal subgroups of $\PSU(3,q)$. Section \ref{sec:generi} applies the results of Sections \ref{sec:preliminari} and \ref{sec:sottogruppimassimaliPGU} to compute the genus of $\cH_q/G$ whenever $G$ does not fix any point or triangle.
Section \ref{sec:geometria} lists the subgroups of $\PGU(3,q)$ acting in the plane without fixed points or triangles.

\section{Preliminary results}\label{sec:preliminari}

Throughout the paper, $q$ will denote a power $p^n$ of a prime $p$.
Maximal subgroups of $\PSU(3,q)$ were explicitly described in \cite{M} and \cite{H}, see also \cite[Theorem A.10]{HKT}. 

The structure of maximal subgroups $M$ of the automorphism group $\aut(\cH_q)\cong\PGU(3,q)$ of the Hermitian curve $\cH_q$ which fix a point or a triangle in $\PG(2,\bar{\mathbb{F}}_{q^2})$ is known and the genera of quotients $\cH_q/G$ with $G\leq M$ have been deeply investigated; see for instance \cite{BMXY,CKT,CKT2,GSX,MZ2,MZ}. Such maximal subgroups are the following, up to conjugation.
\begin{itemize}
\item[(I)] The stabilizer $M_1$ of a point $P\in\cH_q(\mathbb{F}_{q^2})$. The group $M_1$ has order $q^3(q^2-1)$ and is a semidirect product of its unique Sylow $p$-subgroup of order $q^3$ and a cyclic group $C$ of order $q^2-1$. The group $C$ fixes also another point $Q\ne P$ with $Q\in\cH_q(\mathbb{F}_{q^2})$.
\item[(II)] The stabilizer $M_2$ of a pole-polar pair $(P,\ell)$ with respect to the unitary polarity associated to $\cH_q$, with $P\in\PG(2,q^2)\setminus\cH_q$ and $P\notin\ell$. The group $M_2$ has order $q(q+1)^2(q-1)$ and is a semidirect product isomorphic to $\SL(2,q)\rtimes C_{q+1}$, where $C_{q+1}$ is cyclic of order $q+1$.
\item[(III)] The stabilizer $M_3$ of a self-polar triangle $T=\{P_1,P_2,P_3\}$ with respect to the unitary polarity associated to $\cH_q$, with $P_i\in\PG(2,q^2)\setminus\cH_q$. The group $M_3$ has order $6(q+1)^2$ and is a semidirect product of an abelian group $C_{q+1}\times C_{q+1}$ fixing $T$ pointwise and a symmetric group $S_3$ acting faithfully on $T$.
\item[(IV)] The stabilizer $M_4$ of a triangle $T=\{P_1,P_2,P_3\}\subset\cH_q(\mathbb{F}_{q^6})\setminus\cH_q(\mathbb{F}_{q^2})$ which is invariant under the Frobenius automorphism $(X,Y,Z)\mapsto (X^{q^2},Y^{q^2},Z^{q^2})$. The group $M_4$ has order $3(q^2-q+1)$ and is a semidirect product $C_{q^2-q+1}\rtimes C_3$, where $C_{q^2-q+1}$ is a Singer group acting semiregularly on $\PG(2,q^2)$ and fixing $T$ pointwise, and $C_3$ has a unique orbit on $T$.
\end{itemize}

For any $i\in\{1,2,3,4\}$, the intersection $\PSU(3,q)\cap M_i$ of $\PSU(3,q)$ with the maximal subgroup $M_i$ listed above is a maximal subgroup of $\PSU(3,q)$ with index $\gcd(3,q+1)$ in $M_i$; see \cite{M} and \cite{H}.

\begin{theorem}{\rm{(}\cite{M,H}\rm{)}}\label{MaximalSubgroupsNotFixingPSU}
If $p$ is odd, then the maximal subgroups of $\PSU(3,q)$ not fixing a point nor a triangle are the following:
\begin{itemize}
\item[(i)] the Hessian groups of order $216$ when $9\mid(q+1)$, and of order $72$ when $3\mid(q+1)$ and $9\nmid(q+1)$;
\item[(ii)] ${\rm PGL}(2,q)$ preserving a conic;
\item[(iii)] ${\rm PSL(2,7)}$ when $p=7$ or $-7$ is not a square in $\mathbb{F}_q$;
\item[(iv)] the alternating group ${\rm A}_6$ when either $p=3$ and $n$ is even, or $5$ is a square in $\mathbb{F}_q$ but $\mathbb{F}_q$ contains no cube roots of unity;
\item[(v)] a group of order $720$ containing the alternating group ${\rm A}_6$ as a normal subgroup when $p=5$ and $n$ is odd;
\item[(vi)] the alternating group ${\rm A}_7$ when $p=5$ and $n$ is odd;
\item[(vii)] $\PSU(3,p^m)$ where $n/m$ is an odd prime different from $3$; 
\item[(viii)] subgroups containing $\PSU(3,p^m)$ as a normal subgroup of index $3$, when $m\mid n$, $n/m=3$, and $3\mid(q+1)$. 
\end{itemize}
If $p=2$, then the maximal subgroups of $\PSU(3,q)$ not fixing a point nor a triangle are the following:
\begin{itemize}
\item[(vii$^\prime$)] $\PSU(3,2^m)$ where $m\mid n$ and $n/m$ is an odd prime different from $3$; when $m=1$, this is the Hessian group of order $72$;
\item[(viii$^\prime$)] subgroups containing $\PSU(3,2^m)$ as a normal subgroup of index $3$, where $n/m=3$ and $3\mid(q+1)$. For $m=1$ this is the Hessian group of order $216$;
\item[(ix$^\prime$)]  a group of order $36$, which exists as a maximal subgroup when $n=1$.
\end{itemize}
\end{theorem}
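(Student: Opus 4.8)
\emph{Proof strategy.} The statement is classical: it is due to Mitchell \cite{M} for $p$ odd and to Hartley \cite{H} for $p=2$ (see also \cite[Theorem A.10]{HKT}). The plan is therefore not to reprove the full subgroup classification of $\PSU(3,q)$, but to explain how the list above is extracted from it, namely by recording \emph{all} maximal subgroups of $\PSU(3,q)$ and then discarding those that leave invariant a point or a triangle of $\PG(2,\bar{\mathbb{F}}_{q^2})$.

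First I would invoke the classical trichotomy for a subgroup $H\leq\PGU(3,q)$ viewed as a projective linear group on $\PG(2,\bar{\mathbb{F}}_{q^2})$: $H$ is reducible, or irreducible but imprimitive, or primitive. If $H$ is reducible it fixes a point or a line; applying the unitary polarity attached to $\cH_q$ one sees that in either case $H$ fixes a point, and the maximal such subgroups are exactly $M_1\cap\PSU(3,q)$ and $M_2\cap\PSU(3,q)$ from the list preceding the statement. If $H$ is imprimitive it permutes three lines through the origin, i.e.\ it stabilizes a triangle (possibly with vertices defined only over an extension of $\mathbb{F}_{q^2}$), and the maximal such subgroups are $M_3\cap\PSU(3,q)$, the stabilizer of a self-polar triangle over $\mathbb{F}_{q^2}$, and $M_4\cap\PSU(3,q)$, the normalizer of a Singer cyclic subgroup, which fixes a Frobenius-stable triangle of $\mathbb{F}_{q^6}$-points. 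Hence ``$H$ fixes no point and no triangle'' is equivalent to ``$H$ is primitive'', and the theorem amounts to listing the \emph{primitive} maximal subgroups of $\PSU(3,q)$.

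Next I would split the primitive maximal subgroups into the two types occurring in \cite{M,H}. The \emph{geometric} ones are the stabilizer $\PGL(2,q)$ of an irreducible conic (item (ii), unconditional) and the subfield subgroups $\PSU(3,p^m)$ with $m\mid n$ and $n/m$ odd, together with their extensions by a field automorphism of order $3$; a short argument with the subgroup lattice of $\PSU(3,q)$ shows $\PSU(3,p^m)$ is maximal exactly when $n/m$ is an odd prime $\ne 3$, and for $n/m=3$ it is normal of index $3$ in a maximal subgroup precisely when $3\mid(q+1)$ --- this gives (vii), (viii), (vii$^\prime$), (viii$^\prime$). The \emph{exceptional} ones are the almost simple groups with socle ${\rm A}_6$, ${\rm A}_7$ or $\PSL(2,7)$, and the solvable Hessian groups, which are the normalizers in $\PSU(3,q)$ of a non-cyclic group of order $9$ acting irreducibly on the underlying $3$-space (of orders $36$, $72$, $216$ according to how large a part of the normalizer lies in $\PSU(3,q)$). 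For each candidate one quotes from \cite{M,H} the arithmetic condition under which the relevant $3$-dimensional projective representation is realizable over $\mathbb{F}_{q^2}$ with an invariant non-degenerate Hermitian form --- this is the source of the quadratic- and cubic-residue hypotheses on $p$ in (iii)--(vi) --- and the condition under which the resulting subgroup is not properly contained in a larger subgroup.

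The hard part is precisely this last piece of bookkeeping: the inclusions among the small primitive groups and the subfield subgroups. One must check, for instance, that the Hessian group of order $72$ is contained in the one of order $216$ exactly when $9\mid(q+1)$, so that the order-$72$ group is maximal only when $3\mid(q+1)$ and $9\nmid(q+1)$; that $\PSU(3,2)$ is isomorphic to the Hessian group of order $72$ and $\PSU(3,2).3$ to the one of order $216$, which accounts for the parentheticals in (vii$^\prime$) and (viii$^\prime$); that ${\rm A}_6\cong\PSL(2,9)$ embeds in ${\rm A}_7$ and in certain overgroups tied to $\PSU(3,5)$, forcing the case distinction between (iv), (v) and (vi); and that $\PSL(2,7)\cong\PSL(3,2)$ is maximal under the stated condition on $7$. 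All these facts are in \cite{M,H}; transcribing them correctly requires care with the exceptional isomorphisms just mentioned and with the distinction between the field $\mathbb{F}_{q^2}$ over which the representation lives and the field $\mathbb{F}_q$ attached to $\PGU(3,q)$.
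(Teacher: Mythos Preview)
The paper does not give its own proof of this theorem: it is stated with the attribution ``(\cite{M,H})'' in the preliminaries section and used thereafter as a quoted classical result, so there is nothing to compare your argument against on the paper's side. Your proposal is exactly the right attitude toward such a statement --- you correctly identify it as the Mitchell--Hartley classification, reduce ``no fixed point or triangle'' to primitivity via the reducible/imprimitive/primitive trichotomy together with the unitary polarity, and then sketch how the primitive maximal subgroups split into geometric (conic stabilizer, subfield subgroups) and exceptional (Hessian, $\PSL(2,7)$, ${\rm A}_6$, ${\rm A}_7$, and their extensions) families with the appropriate arithmetic conditions.

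Two small points of care. First, item (ii), $\PGL(2,q)$ preserving a conic, is not unconditional: it appears only in the odd-$p$ list, since in characteristic $2$ a smooth conic has a nucleus and its stabilizer is reducible. Second, your description of the Hessian groups as ``normalizers in $\PSU(3,q)$ of a non-cyclic group of order $9$ acting irreducibly'' and the accompanying case split on $9\mid(q+1)$ versus $3\mid(q+1)$, $9\nmid(q+1)$ is right, but note that for $p=2$ the order-$36$ group in (ix$^\prime$) is a genuinely separate maximal subgroup only at $q=2$, where $\PSU(3,2)$ itself is solvable; your sketch folds this into the Hessian discussion, which is fine, but the paper lists it separately because for $q=2$ the ambient group is too small for the usual hierarchy to kick in.
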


The following lemma recalls how an element of $\PGU(3,q)$ of a given order acts on $\PG(2,\mathbb{K})$, and in particular on $\cH_q(\mathbb{F}_{q^2})$; for the usual terminology about collineations of projective planes; see \cite{HP}.

\begin{lemma}{\rm{(}\cite[Lemma 2.2]{MZ}\rm{)}}\label{classificazione}
For a nontrivial element $\sigma\in\PGU(3,q)$, one of the following cases holds.
\begin{itemize}
\item[(A)] ${\rm ord}(\sigma)\mid(q+1)$ and $\sigma$ is a homology whose center $P$ is a point off $\cH_q$ and whose axis $\ell$ is a chord of $\cH_q(\mathbb{F}_{q^2})$ such that $(P,\ell)$ is a pole-polar pair with respect to the unitary polarity associated to $\cH_q(\mathbb{F}_{q^2})$.
\item[(B)] ${\rm ord}(\sigma)$ is coprime to $p$ and $\sigma$ fixes the vertices $P_1,P_2,P_3$ of a non-degenerate triangle $T$.
\begin{itemize}
\item[(B1)] The points $P_1,P_2,P_3$ are $\fqs$-rational, $P_1,P_2,P_3\notin\cH_q$ and the triangle $T$ is self-polar with respect to the unitary polarity associated to $\cH_q(\mathbb{F}_{q^2})$. Also, $o(\sigma)\mid(q+1)$.
\item[(B2)] The points $P_1,P_2,P_3$ are $\fqs$-rational, $P_1\notin\cH_q$, $P_2,P_3\in\cH_q$. %and the polar lines of $P_1,P_2,P_3$ are $P_2P_3$, $P_1P_2$, $P_1P_3$, respectively.
     Also, $o(\sigma)\mid(q^2-1)$ and $\ord(\sigma)\nmid(q+1)$.
\item[(B3)] The points $P_1,P_2,P_3$ have coordinates in $\mathbb{F}_{q^6}\setminus\mathbb{F}_{q^2}$, $P_1,P_2,P_3\in\cH_q$. %and their polar lines are $P_1P_2$, $P_2P_3$, $P_3P_1$, respectively.
    Also, $\ord(\sigma)\mid (q^2-q+1)$.
\end{itemize}
\item[(C)] ${\rm ord}(\sigma)=p$ and $\sigma$ is an elation whose center $P$ is a point of $\cH_q$ and whose axis $\ell$ is a tangent of $\cH_q(\mathbb{F}_{q^2})$; here $(P,\ell)$ is a pole-polar pair with respect to the unitary polarity associated to $\cH_q(\mathbb{F}_{q^2})$.
\item[(D)] ${\rm ord}(\sigma)=p$ with $p\ne2$, or ${\rm ord}(\sigma)=4$ and $p=2$. In this case $\sigma$ fixes an $\fqs$-rational point $P$, with $P \in \cH_q$, and a line $\ell$ which is a tangent  of $\cH_q(\mathbb{F}_{q^2})$; here $(P,\ell)$ is a pole-polar pair with respect to the unitary polarity associated to $\cH_q(\mathbb{F}_{q^2})$.
\item[(E)] $p\mid{\rm ord}(\sigma)$, $p^2\nmid{\rm ord}(\sigma)$, and ${\rm ord}(\sigma)\ne p$. In this case $\sigma$ fixes two $\fqs$-rational points $P,Q$, %the line $PQ$ linewise, and another line $\ell$ through $P$ linewise.
     with $P\in\cH_q$, $Q\notin\cH_q$. %$PQ$ is the polar line of $P$, and $\ell$ is the polar line of $Q$.
\end{itemize}
\end{lemma}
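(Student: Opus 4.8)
The plan is to fix a representative $A\in{\rm GU}(3,q)$ of $\sigma$, acting on $V=\bfq^3$ and preserving the nondegenerate Hermitian form $H$ whose isotropic points are exactly $\cH_q$, and to read the geometry of $\sigma$ on $\PG(2,\bfq)$ off the Jordan decomposition $A=A_sA_u$, where $A_s$ is semisimple, $A_u$ is unipotent, $A_sA_u=A_uA_s$, and both lie in ${\rm GU}(3,q)$ (being powers of $A$). Two elementary facts about a unitary operator are used throughout: if $v$ is an eigenvector with eigenvalue $\lambda$ then $H(v,v)=\lambda^{q+1}H(v,v)$, so $\lambda^{q+1}\ne1$ forces $v$ isotropic; and if $v,w$ are eigenvectors with eigenvalues $\lambda\ne\mu$ then $H(v,w)=0$ unless $\mu=\lambda^{-q}$, the ``form partner'' of $\lambda$ inside the eigenvalue set. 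Moreover $A\in{\rm GU}(3,q)$ forces the eigenvalue multiset of $A$ to be stable, with multiplicities, under $\lambda\mapsto\lambda^{-q}$. The first dichotomy is whether $p\nmid\ord(\sigma)$, so that $A_u=1$ and $A$ is diagonalizable over $\bfq$, or $p\mid\ord(\sigma)$, so that $A_u\ne1$.

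In the semisimple case I split according to the multiplicities of the eigenvalues $\lambda_1,\lambda_2,\lambda_3$. If exactly two coincide, $\sigma$ is a homology: the simple eigenline is the center $P$ and the eigenplane of the repeated eigenvalue is the axis $\ell$; stability under $\lambda\mapsto\lambda^{-q}$ forces both eigenvalues into the group $\mu_{q+1}$ of $(q+1)$-st roots of unity, so $\ord(\sigma)\mid q+1$ and (the partner condition being inactive) $P\not\subset\ell$, $(P,\ell)$ is a pole--polar pair, the generator of $P$ is anisotropic, hence $P\notin\cH_q$ and $\ell$ is a secant; this is case~(A). If the three eigenvalues are distinct, $\sigma$ fixes exactly the vertices of a nondegenerate triangle, and the cycle type of $\lambda\mapsto\lambda^{-q}$ on $\{\lambda_1,\lambda_2,\lambda_3\}$ gives three subcases. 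Trivial permutation: all $\lambda_i\in\mu_{q+1}\subseteq\fqs$, the three eigenvectors are pairwise orthogonal and, by nondegeneracy, anisotropic, the triangle is self-polar off $\cH_q$, and $\ord(\sigma)\mid q+1$: case~(B1). A transposition: all $\lambda_i\in\fqs$ with the swapped pair outside $\mu_{q+1}$, so those two eigenvectors are isotropic, giving two vertices on $\cH_q$ and one off it, with $\ord(\sigma)\mid q^2-1$ but $\ord(\sigma)\nmid q+1$: case~(B2). A $3$-cycle: the $\lambda_i$ form a Frobenius orbit in $\mathbb F_{q^6}\setminus\fqs$, none of them in $\mu_{q+1}$, so all three eigenvectors are isotropic, the vertices lie on $\cH_q$ and are cyclically permuted by the $q^2$-power Frobenius; here the partner relation $\lambda_i^{-q}\in\{\lambda_1,\lambda_2,\lambda_3\}$ yields $\lambda_i^{q^3+1}=1$, and with $q^3+1=(q+1)(q^2-q+1)$ and $\gcd(q^3+1,q^2-1)=q+1$ this gives $\ord(\sigma)\mid q^2-q+1$: case~(B3).

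In the non-semisimple case the nonzero unipotent $A_u$ has Jordan type $(2,1)$ or $(3)$, and type $(3)$ forces $A_s$ scalar, since the centralizer of a regular unipotent $3\times3$ matrix is scalars times unipotents; also $(I+N)^p=I$ for $p$ odd (as $N^3=0$) while $(I+N)^2=I+N^2$ for $p=2$, so unipotents of ${\rm GU}(3,q)$ have order dividing $p$ for $p$ odd and dividing $4$ for $p=2$, with order $4$ only in type $(3)$; in particular $p^2\nmid\ord(\sigma)$ always. If $A=A_u$ is a transvection, $\sigma$ is an elation of order $p$ with axis $\ell$ given by the two-dimensional $\Ker N$ and center $P$ given by $\Im N\subseteq\Ker N$; writing $N=\phi(\cdot)\,u$ and expanding $H(Av,Aw)=H(v,w)$ gives $u\perp\Ker N$, and $u\in\Ker N$ then makes $u$ isotropic, so $P\in\cH_q(\fqs)$ with $\ell$ the tangent at $P$: case~(C). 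If $A=A_u$ is regular unipotent, its invariant subspaces form the chain $\Ker N\subset\Ker N^2\subset V$ with $\Ker N=\Im N^2$ and $\Ker N^2=\Im N$, so $\sigma$ has the unique fixed point $P$ given by the one-dimensional $\Ker N$ and the unique fixed line $\ell$ given by $\Ker N^2$; taking $w$ spanning $\Ker N$ in the expansion of $H(Av,Aw)=H(v,w)$ gives $\Im N\perp\Ker N$, hence $\Ker N$ isotropic, so $P\in\cH_q(\fqs)$ and $\ell=P^\perp$, while $\ord(\sigma)=p$ for $p$ odd and $\ord(\sigma)=4$ for $p=2$: case~(D). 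Finally, if $A_s$ and $A_u$ are both nontrivial then $A_u$ has type $(2,1)$, and commuting with $N$ forces $A_s$ to have a double eigenvalue $\mu$ on the plane $V_\mu$ carrying $N$ and a simple eigenvalue $\nu$ on a complementary line $V_\nu$; so $\sigma$ has exactly two fixed points, namely $P$, the center of the transvection inside $\mathbb P(V_\mu)$, which is isotropic by the computation above so that $P\in\cH_q$, and $Q$ given by $V_\nu$, for which $V_\nu^\perp=V_\mu$ makes $H$ nondegenerate on $V_\nu$ so that $Q\notin\cH_q$; and $p\mid\ord(\sigma)$, $\ord(\sigma)\ne p$: case~(E). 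In every subcase the $\fqs$-rationality claims follow from $A$ and its characteristic polynomial being defined over $\fqs$.

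I expect the delicate part to be the bookkeeping in the semisimple case: translating the two constraints --- characteristic polynomial over $\fqs$, and eigenvalue multiset stable under $\lambda\mapsto\lambda^{-q}$ --- into the exact divisibility statements ($q+1$; $q^2-1$ but not $q+1$; $q^2-q+1$) and, at the same time, into the precise number of isotropic eigenvectors, hence of triangle vertices on $\cH_q$, in each cycle type, keeping careful track of the ``form partner'' pairing $\lambda\leftrightarrow\lambda^{-q}$ on the eigenvalues. By contrast the unipotent part of the argument is comparatively routine: it amounts to listing the invariant subspaces of a nilpotent $3\times3$ matrix and expanding the single quadratic identity $H(Av,Aw)=H(v,w)$ coming from $A\in{\rm GU}(3,q)$.
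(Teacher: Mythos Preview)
The paper does not prove this lemma; it is quoted verbatim as a preliminary result from \cite{MZ} (their Lemma~2.2) and used as a black box throughout Sections~\ref{sec:sottogruppimassimaliPGU}--\ref{sec:generi}. So there is no ``paper's own proof'' to compare against.

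Your argument is correct and is the natural one: lift $\sigma$ to $A\in{\rm GU}(3,q)$, take the Jordan decomposition $A=A_sA_u$ (both factors lie in ${\rm GU}(3,q)$ because they are powers of $A$), and split into the semisimple case (where the cycle structure of $\lambda\mapsto\lambda^{-q}$ on the eigenvalue multiset distinguishes (A), (B1), (B2), (B3)) and the non-semisimple case (where the Jordan type $(2,1)$ or $(3)$ of $A_u$, together with whether $A_s$ is scalar, distinguishes (C), (D), (E)). The key computations --- that the double/simple eigenspaces of a unitary homology are orthogonal complements, that an eigenvalue outside $\mu_{q+1}$ forces isotropy, that $\mathrm{Im}\,N$ is isotropic for a unitary transvection, and the order bounds $p$ (odd) or $4$ ($p=2$) for unipotents in ${\rm GL}(3)$ --- are all handled correctly. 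Two small points you leave implicit but which go through: in (B3), the order divisibility $\ord(\sigma)\mid q^2-q+1$ comes from the eigenvalue \emph{ratios} $\lambda_i/\lambda_j=\lambda_i^{q+1}$, which lie in $\mu_{q^2-q+1}$ since $\lambda_i^{q^3+1}=1$; and in (E), the anisotropy of $V_\nu$ follows by applying your case~(A) analysis to the unitary homology $A_s$. The rationality statements are, as you note, immediate from $A$ and its characteristic polynomial being defined over $\mathbb{F}_{q^2}$.
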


Throughout the paper, a nontrivial element of $\PGU(3,q)$ is said to be of type (A), (B), (B1), (B2), (B3), (C), (D), or (E), as given in Lemma \ref{classificazione}; $G$ always stands for a subgroup of $\PGU(3,q)$.

Every subgroup $G$ of $\PGU(3,q)$ produces a quotient curve $\cH_q/G$, and the cover $\cH_q\rightarrow\cH_q/G$ is a Galois cover defined over $\mathbb{F}_{q^2}$  where the degree of the different divisor $\Delta$ is given by the Riemann-Hurwitz formula \cite[Theorem 3.4.13]{Sti},
$$\Delta=(2g(\cH_q)-2)-|G|(2g(\cH_q/G)-2).$$ 

On the other hand, $\Delta=\sum_{\sigma\in G\setminus\{id\}}i(\sigma)$, where $i(\sigma)\geq0$ is given by the Hilbert's different formula \cite[Thm. 3.8.7]{Sti}, namely
\begin{equation}\label{contributo}
\textstyle{i(\sigma)=\sum_{P\in\cH_q(\bar{\mathbb{F}}_{q^2})}v_P(\sigma(t)-t),}
\end{equation}
where $t$ is a local parameter at $P$.

By analyzing the geometric properties of the elements $\sigma \in \PGU(3,q)$, it turns out that there are only few possibilities for $i(\sigma)$, as stated in the following theorem.

\begin{theorem}\label{caratteri}{\rm{(}\cite[Theorem 2.7]{MZ}\rm{)}}
For a nontrivial element $\sigma\in \PGU(3,q)$ one of the following cases occurs.
\begin{enumerate}
\item If $\ord(\sigma)=2$ and $2\mid(q+1)$, then $\sigma$ is of type {\rm(A)} and $i(\sigma)=q+1$.
\item If $\ord(\sigma)=3$, $3 \mid(q+1)$ and $\sigma$ is of type {\rm(B3)}, then $i(\sigma)=3$.
\item If $\ord(\sigma)\ne 2$, $\ord(\sigma)\mid(q+1)$ and $\sigma$ is of type {\rm(A)}, then $i(\sigma)=q+1$.
\item If $\ord(\sigma)\ne 2$, $\ord(\sigma)\mid(q+1)$ and $\sigma$ is of type {\rm(B1)}, then $i(\sigma)=0$.
\item If $\ord(\sigma)\mid(q^2-1)$ and $\ord(\sigma)\nmid(q+1)$, then $\sigma$ is of type {\rm(B2)} and $i(\sigma)=2$.
\item If $\ord(\sigma)\ne3$ and $\ord(\sigma)\mid(q^2-q+1)$, then $\sigma$ is of type {\rm(B3)} and $i(\sigma)=3$.
\item If $p=2$ and $\ord(\sigma)=4$, then $\sigma$ is of type {\rm(D)} and $i(\sigma)=2$.
\item If $\ord(\sigma)=p$, $p \ne2$ and $\sigma$ is of type {\rm(D)}, then $i(\sigma)=2$.
\item If $\ord(\sigma)=p$ and $\sigma$ is of type {\rm(C)}, then $i(\sigma)=q+2$.
\item If $\ord(\sigma)\ne p$, $p\mid\ord(\sigma)$ and $\ord(\sigma)\ne4$, then $\sigma$ is of type {\rm(E)} and $i(\sigma)=1$.
\end{enumerate}
\end{theorem}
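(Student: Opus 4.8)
The starting point is Lemma \ref{classificazione}, which already partitions the nontrivial elements $\sigma\in\PGU(3,q)$ into the geometric types (A)--(E) according to $\ord(\sigma)$ and the configuration of fixed points and lines. The plan is to go through the ten cases in the statement one at a time, in each case first pinning down the type of $\sigma$ (which in most items is forced by the arithmetic conditions on $\ord(\sigma)$ via Lemma \ref{classificazione}), and then computing $i(\sigma)=\sum_{P}v_P(\sigma(t)-t)$ from \eqref{contributo} by a local analysis at the fixed points of $\sigma$ on $\cH_q(\bar{\mathbb F}_{q^2})$. The key observation is that $i(\sigma)$ receives a nonzero contribution only from points $P$ that are fixed by $\sigma$, so one only has to understand the action of $\sigma$ on the completion of the local ring at each such $P$, and whether $P$ is a ramification point of $\cH_q\to\cH_q/\langle\sigma\rangle$ that is tame or wild.

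First I would dispose of the tame cases, i.e. those with $p\nmid\ord(\sigma)$, namely items (1)--(6). Here each fixed point $P$ on $\cH_q$ contributes exactly $1$ to $i(\sigma)$, so $i(\sigma)$ equals the number of fixed points of $\sigma$ lying on $\cH_q(\bar{\mathbb F}_{q^2})$; this number is read off from Lemma \ref{classificazione}. For type (A) (items (1),(3)) the axis $\ell$ is a chord, hence meets $\cH_q$ in $q+1$ points, all fixed, giving $i(\sigma)=q+1$; the center $P$ is off $\cH_q$ so contributes nothing. For type (B1) (item (4)) all three vertices are off $\cH_q$ and no other point is fixed, so $i(\sigma)=0$. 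For type (B2) (item (5)) exactly two vertices, $P_2$ and $P_3$, lie on $\cH_q$, giving $i(\sigma)=2$. For type (B3) (items (2) and (6)) all three vertices lie on $\cH_q$, giving $i(\sigma)=3$; item (2) is just the special instance $\ord(\sigma)=3$ of this, included separately because it overlaps with the wild prime $p=3$ only in the excluded sense $3\mid(q+1)$, and one checks $3\nmid(q^2-q+1)$ is false in general so the element is genuinely of type (B3), not (C) or (D). Throughout, one must verify that $\sigma$ has no \emph{other} fixed points on $\cH_q$ beyond the triangle vertices (equivalently, $\sigma$ has an eigenbasis of three points and no eigenline meets $\cH_q$ in a fixed point other than a vertex), which follows from the diagonalizability of $\sigma$ over $\bar{\mathbb F}_{q^2}$ in the tame case.

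The wild cases $p\mid\ord(\sigma)$, items (7)--(10), are where the main work lies and constitute the expected obstacle, since now $v_P(\sigma(t)-t)$ can exceed $1$ and must be computed from the precise form of the Sylow-$p$ action. For type (C) (item (9)), $\sigma$ is an elation with center $P\in\cH_q$ and tangent axis $\ell$; choosing the standard model $X^qZ+XZ^q=Y^{q+1}$ with $P$ the common point of $\cH_q$ and $\ell$, one writes $\sigma$ in a unipotent normal form, picks a local parameter $t$ at $P$, and computes $v_P(\sigma(t)-t)=q+2$; since $P$ is the only fixed point of $\sigma$ on $\cH_q$ this gives $i(\sigma)=q+2$. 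For type (D) (items (7),(8): $\ord(\sigma)=p\ne2$, or $\ord(\sigma)=4$ with $p=2$), $\sigma$ again fixes a unique point $P\in\cH_q$ with tangent axis, but now $\sigma$ is not an elation; the local computation at $P$ yields $v_P(\sigma(t)-t)=2$, hence $i(\sigma)=2$. For type (E) (item (10)), $\sigma$ has order divisible by $p$ but not $p^2$ and not equal to $p$, so it factors (in its action near its two fixed points $P\in\cH_q$, $Q\notin\cH_q$) as a commuting product of a $p$-part and a prime-to-$p$ part; the prime-to-$p$ part acts on the tangent direction at $P$ nontrivially, so the ramification at $P$ is no longer wild in the relevant sense and one gets $v_P(\sigma(t)-t)=1$, whence $i(\sigma)=1$. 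In every wild subcase the heart of the argument is the explicit choice of coordinates putting $\sigma$ into a canonical matrix shape adapted to its fixed structure, followed by a direct series expansion of $\sigma(t)-t$ in a local parameter; this is the same technique used in \cite{MZ} and I would cite \cite[Theorem 2.7]{MZ} for the computations, reproducing only the coordinate normalizations and the resulting valuations, and checking case by case (using Lemma \ref{classificazione}) that the arithmetic hypotheses of each item pin down exactly one geometric type.
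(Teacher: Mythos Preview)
The paper does not prove Theorem \ref{caratteri}; it simply quotes it from \cite[Theorem 2.7]{MZ} as a preliminary result, with no argument given. Your proposal is a correct outline of exactly the kind of proof that underlies that citation: use Lemma \ref{classificazione} to determine the type, count fixed points of $\sigma$ on $\cH_q$ in the tame case (each contributing $1$ to $i(\sigma)$), and in the wild case put $\sigma$ into a normal form on the model $X^qZ+XZ^q=Y^{q+1}$ and compute $v_P(\sigma(t)-t)$ directly. Since you already plan to cite \cite{MZ} for the explicit local computations in cases (7)--(10), your write-up would effectively be a gloss on the cited theorem rather than an independent proof, which is consistent with how the paper treats the result.
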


%Since we were not able to find references for the complete list of maximal subgroups of $\PGU(3,q)$, we determine in the following sections the complete list of maximal subgroups of $\PGU(3,q)$ when $3 \mid (q+1)$, that is, when $\PGU(3,q)\ne\PSU(3,q)$.
%We also determine completely the genera of quotients $\cH_q/G$ when $G$ is contained in a maximal subgroup $M$ of $\PGU(3,q)$ such that neither $M$ nor $G$ fix a point, or a triangle, or a pole-polar pair.

%In the following sections we proceed with a case-by-case analysis on maximal subgroups $M$ of $\PGU(3,q)$ such that $M \cap \PSU(3,q)$ is isomorphic to one of the groups listed above.

\section{On maximal subgroups of $\PGU(3,q)$}\label{sec:sottogruppimassimaliPGU}

Since we were not able to find references for the complete list of maximal subgroups of $\PGU(3,q)$, we determine it in this section when $3 \mid (q+1)$, that is, when $\PGU(3,q)\ne\PSU(3,q)$.
The result is the following.

\begin{theorem} \label{MaximalSubgroupsPGU}
Let $q=p^n$ be a prime power such that $3 \mid (q+1)$. Then the following is the list of maximal subgroups of $\PGU(3,q)$.
\begin{enumerate}
\item The stabilizer $M_1$ of a point $P\in\cH_q(\mathbb{F}_{q^2})$. The group $M_1$ has order $q^3(q^2-1)$ and is a semidirect product of its unique Sylow $p$-subgroup of order $q^3$ and a cyclic group $C$ of order $q^2-1$. The group $C$ fixes also another point $Q\ne P$ with $Q\in\cH_q(\mathbb{F}_{q^2})$.
\item The stabilizer $M_2$ of a pole-polar pair $(P,\ell)$ with respect to the unitary polarity associated to $\cH_q$, with $P\in\PG(2,q^2)\setminus\cH_q$ and $P\notin\ell$. The group $M_2$ has order $q(q+1)^2(q-1)$ and is a semidirect product isomorphic to $\SL(2,q)\rtimes C_{q+1}$, where $C_{q+1}$ is cyclic of order $q+1$.
\item The stabilizer $M_3$ of a self-polar triangle $T=\{P_1,P_2,P_3\}$ with respect to the unitary polarity associated to $\cH_q$, with $P_i\in\PG(2,q^2)\setminus\cH_q$. The group $M_3$ has order $6(q+1)^2$ and is a semidirect product of an abelian group $C_{q+1}\times C_{q+1}$ fixing $T$ pointwise and a symmetric group $S_3$ acting faithfully on $T$.
\item The stabilizer $M_4$ of a triangle $T=\{P_1,P_2,P_3\}\subset\cH_q(\mathbb{F}_{q^6})\setminus\cH_q(\mathbb{F}_{q^2})$ which is invariant under the Frobenius automorphism $(X,Y,Z)\mapsto (X^{q^2},Y^{q^2},Z^{q^2})$. The group $M_4$ has order $3(q^2-q+1)$ and is a semidirect product $C_{q^2-q+1}\rtimes C_3$, where $C_{q^2-q+1}$ is a Singer group acting semiregularly on $\PG(2,q^2)$ and fixing $T$ pointwise, and $C_3$ has a unique orbit on $T$.
\item The normal subgroup $\PSU(3,q)$ of index $3$ in $\PGU(3,q)$.
\item The Hessian group $H_{216} \cong \PGU(3,2)$, if $p$ is odd and $9 \nmid (q+1)$.
\item $\PGU(3,p^m)$ where $m \mid n$ and $n/m$ is an odd prime different from $3$.
\end{enumerate}
\end{theorem}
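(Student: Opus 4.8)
The plan is to deduce the list for $G:=\PGU(3,q)$ from the known classification of maximal subgroups of $N:=\PSU(3,q)$ — namely Theorem \ref{MaximalSubgroupsNotFixingPSU} together with the four subgroups $M_i\cap N$ arising from cases (I)--(IV) — by exploiting that $N$ is a normal subgroup of $G$ of prime index $3$. First I would record the standard dictionary for a group with a normal subgroup of prime index. We may assume $q>2$, since $G=\PGU(3,2)$ is itself the Hessian group of order $216$ and is handled by a direct computation; then the only proper nontrivial normal subgroup of $G$ is $N$. If $M$ is maximal in $G$ then either $M=N$, or $MN=G$, and in the latter case $M_0:=M\cap N$ has index $3$ in $M$, is normalized by $M$, and in fact $M=N_G(M_0)$ (indeed $N_G(M_0)\supseteq M$ equals $M$ or $G$, and $M_0\trianglelefteq G$ is impossible, as $M_0\ne N$ because $N\not\le M$, and $M_0\ne1$ because $M$ would then have order $3$). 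Conversely, if $K$ is a maximal subgroup of $N$ which is invariant under the diagonal automorphism of order $3$ — equivalently $N_G(K)\not\le N$, equivalently the $G$-conjugacy class of $K$ consists of a single $N$-conjugacy class — then $N_G(K)\cap N=K$, so $N_G(K)$ is maximal in $G$, and every maximal $M$ of $G$ with $M\cap N$ maximal in $N$ arises this way. So the argument reduces to: (a) going through the maximal subgroups $K$ of $N$, deciding which are diagonal-invariant and computing $N_G(K)$ for those; and (b) excluding novelties, i.e. maximal $M$ of $G$ with $M\cap N$ not maximal in $N$.

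For (a), the geometric cases are direct: for $i\in\{1,2,3,4\}$ the subgroup $M_i\cap N$ is maximal in $N$, it is diagonal-invariant because $N_G(M_i\cap N)$ contains $M_i\not\le N$, and $N_G(M_i\cap N)=M_i$ since both stabilize the same configuration; this yields items (1)--(4), while item (5) is $N$ itself. For the non-geometric maximal subgroups of $N$ I would use the embedding $N_G(K)/C_G(K)\hookrightarrow\aut(K)$, noting $C_G(K)=1$ because these $K$ act irreducibly. For the classes (ii)--(vi) of Theorem \ref{MaximalSubgroupsNotFixingPSU} (the groups $\PGL(2,q)$, $\PSL(2,7)$, $A_6$, its order-$720$ extension, $A_7$), the outer automorphisms realizable by conjugation inside $\PGU(3,q)$ form a group of order coprime to $3$ — for the alternating and related groups $\aut(K)/K$ is a $2$-group, while for $\PGL(2,q)$ the only outer automorphisms are field automorphisms, which alter the isomorphism type of the natural $3$-dimensional module and so are not induced by an element of $G$ — hence $N_G(K)$ cannot be an extension $K.3$; so these classes are not diagonal-invariant, each splits into three $N$-classes, and, lying properly inside $N$, they produce no maximal subgroup of $G$.

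For the subfield subgroups (cases (vii)--(viii)), writing $q=p^n$, $m\mid n$, and using that $3\mid(q+1)$ forces $p\equiv2\pmod3$, $n$ odd, hence $m$ odd and $3\mid p^m+1$, a short norm computation in the cyclic group of $(q+1)$-st roots of unity shows that $\PGU(3,p^m)\le\PSU(3,q)$ holds exactly when $3\mid(n/m)$. Therefore, when $n/m$ is an odd prime $\ne3$ the subgroup $\PSU(3,p^m)$ is diagonal-invariant with $N_G(\PSU(3,p^m))=\PGU(3,p^m)$, giving item (7); whereas when $n/m=3$ the maximal subgroup $\PSU(3,p^m).3$ of $N$ coincides with $\PGU(3,p^m)$, already lies inside $N$, and is not maximal in $G$. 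For the Hessian groups (case (i)) one examines the $3$-part of $|\PGU(3,q)|$: if $p$ is odd and $9\nmid(q+1)$ the maximal Hessian subgroup of $N$ is $H_{72}$, it is diagonal-invariant, and $N_G(H_{72})\cong\PGU(3,2)$, which gives item (6); if $9\mid(q+1)$ the maximal Hessian subgroup of $N$ already has order $216$, lies in $N$, and turns out to be self-normalizing in $G$, so it contributes nothing new. Step (b) is then handled by checking, for each maximal $K$ of $N$ whose $G$-class is fused, that no proper diagonal-invariant subgroup $M_0<K$ has $N_G(M_0)$ maximal in $G$: such an $M_0$ would again force $N_G(M_0)\cap N$ to be one of the maximal subgroups of $N$ already treated.

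The step I expect to be the main obstacle is precisely the bookkeeping in (a) for the subfield and Hessian cases, together with the small field $q=2$ and the borderline behaviour when $q+1$ carries an extra factor of $3$: these are exactly the points at which the passage from $\PSU(3,q)$ to $\PGU(3,q)$ is genuinely nontrivial, requiring explicit computations with norms, determinants and $3$-local structure rather than the formal normal-subgroup-of-prime-index argument that governs everything else.
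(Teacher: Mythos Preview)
Your high-level strategy matches the paper's: both exploit $[G:N]=3$ with $N=\PSU(3,q)$ and run through the maximal subgroups of $N$ from Theorem~\ref{MaximalSubgroupsNotFixingPSU}. The execution differs substantially. Where you dispose of cases (ii)--(vi) via the embedding $N_G(K)/C_G(K)\hookrightarrow\aut(K)$ together with $C_G(K)=1$ and knowledge of $\mathrm{Out}(K)$, the paper instead assumes $M\cap N=H$, so that $|M|=3|H|$, and then works concretely: for the Hessian case with $9\mid(q+1)$ it enumerates by computer the groups of order $648$ containing $\PGU(3,2)$ and discards each via a small normal subgroup (Proposition~\ref{HessianInPSU}); for $\PGL(2,q)$ it shows any extension must be $\PGL(2,q)\rtimes C_3$ with $3\mid n$, then analyses the action of the extra order-$3$ element on the fixed conic using the element-type classification of Lemma~\ref{classificazione} (Proposition~\ref{maxPGLconica}); for $A_6$, $A_7$ and the order-$720$ group it argues that an extension by $C_3$ forces an order-$3$ element centralising $H$, whose fixed locus $H$ would then stabilise, contradicting Theorem~\ref{MaximalSubgroupsNotFixingPSU}. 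Your route is faster whenever $|\mathrm{Out}(K)|$ is coprime to $3$, but your module-theoretic assertion for $\PGL(2,q)$ needs as much care as the paper's geometric argument, and the subfield and Hessian bookkeeping you correctly flag as the crux is handled in both approaches by essentially the same norm and determinant computations (Lemmas~\ref{lemma1sottoPSU}--\ref{lemma2sottoPSU}, Proposition~\ref{maxpgupsu}).

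One point to repair: your step (b) is not correct as written. You claim that if $M_0=M\cap N$ is non-maximal in $N$ then ``$N_G(M_0)\cap N$ [is] one of the maximal subgroups of $N$ already treated''; but you have already argued $N_G(M_0)=M$, so $N_G(M_0)\cap N=M_0$ itself, which was assumed non-maximal. Excluding novelties needs a separate argument --- for instance, $M$ permutes the maximal overgroups of $M_0$ in $N$, and one checks that no configuration yields a new maximal subgroup of $G$. (The paper does not make this step explicit either.)
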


In order to prove Theorem \ref{MaximalSubgroupsPGU}, we proceed with a case-by-case analysis on maximal subgroups $M$ of $\PGU(3,q)$ such that $M\cap\PSU(3,q)$ is isomorphic to one of the maximal subgroups of $\PSU(3,q)$ not fixing a point nor a triangle, listed in Theorem \ref{MaximalSubgroupsNotFixingPSU}.

\subsection{Hessian groups}

\begin{proposition}\label{HessianInPSU}
Let $q=p^n$ be a power of an odd prime $p$ such that $9\mid(q+1)$. Let $H\cong\PGU(3,2)$ be a maximal subgroup of $\PSU(3,q)$ isomorphic to the Hessian group of order $216$, and $M$ be a maximal subgroup of $\PGU(3,q)$ containing $H$. Then $M\cong\PSU(3,q)$.
\end{proposition}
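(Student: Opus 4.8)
The plan is to exploit the fact that $H\cong\PGU(3,2)$ is simultaneously "small" (order $216$) and "rigid" (it is the full Hessian group, already maximal in $\PSU(3,q)$), so the only room for a larger overgroup $M$ inside $\PGU(3,q)$ is the index-$3$ overgroup $\PSU(3,q)$ itself. First I would observe that $M\cap\PSU(3,q)$ is a subgroup of $\PSU(3,q)$ strictly containing $H$ or equal to $H$; since $H$ is \emph{maximal} in $\PSU(3,q)$ by Theorem~\ref{MaximalSubgroupsNotFixingPSU}(i), either $M\cap\PSU(3,q)=H$ or $M\cap\PSU(3,q)=\PSU(3,q)$. In the latter case $M\supseteq\PSU(3,q)$, and since $\PSU(3,q)$ is maximal in $\PGU(3,q)$ (Theorem~\ref{MaximalSubgroupsPGU}(5), or directly from $[\PGU(3,q):\PSU(3,q)]=3$ being prime) and $M\ne\PGU(3,q)$ by maximality of $M$, we get $M=\PSU(3,q)$, as desired. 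So the whole content is to rule out the case $M\cap\PSU(3,q)=H$.

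Suppose then $M\cap\PSU(3,q)=H$. Since $[\PGU(3,q):\PSU(3,q)]=3$, the image of $M$ in $\PGU(3,q)/\PSU(3,q)\cong C_3$ is trivial or all of $C_3$; the first possibility forces $M=H$, contradicting maximality of $M$ (as $H\subsetneq\PSU(3,q)\subsetneq\PGU(3,q)$), so $M$ surjects onto $C_3$ and $[M:H]=3$, i.e.\ $|M|=648$ and $M/H\cong C_3$ with $H\trianglelefteq M$. The next step is to derive a contradiction from the existence of such an $M$. I would argue that an order-$648$ group $M$ with normal subgroup $H\cong\PGU(3,2)$ of index $3$ cannot embed in $\PGU(3,q)$ acting without fixed point or triangle: first, $M$ normalizes $H$, hence normalizes the (unique up to the $H$-action) configuration preserved by $H$ in $\PG(2,\overline{\mathbb F}_{q^2})$ — namely the Hesse configuration of $9$ inflection points of a cubic, equivalently the relevant set of $12$ lines / $4$ triangles. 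Since $M$ permutes this finite structure and $H$ already induces its full "geometric" stabilizer inside $\PGU(3,q)$ (the normalizer of the Hessian configuration in $\PGL(3,\overline{\mathbb F}_{q^2})$ is the Hessian group of order $216$, whose intersection with the unitary group is $H$), any element of $M$ normalizing $H$ and acting on the same configuration must already lie in $H$: concretely, $N_{\PGU(3,q)}(H)=H$ because $N_{\PGL(3,\bar{\mathbb F})}(\text{Hessian configuration})$ has order $216$ and $H$ realizes the unitary part of it entirely. This gives $M\le N_{\PGU(3,q)}(H)=H$, contradicting $|M|=648$.

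The main obstacle I expect is the last step: showing cleanly that $N_{\PGU(3,q)}(H)=H$, i.e.\ that $H$ is self-normalizing in $\PGU(3,q)$. The cleanest route is probably via the action on the Hesse configuration: $H$ stabilizes a Hesse pencil of cubics, the $9$ base points form an $\mathbb F_3$-affine plane $\AG(2,3)$, and the subgroup of $\PGL(3)$ stabilizing this set and acting as all of $\AGL(2,3)$ on it has order $216$; an element normalizing $H$ must stabilize the set of $H$-fixed structures (the configuration is the \emph{unique} such), hence lies in that order-$216$ group, and intersecting with $\PGU(3,q)$ brings it back into $H$. Care is needed because one must check $H$ does not fix a point or a triangle (true, by the hypothesis that it is the maximal subgroup of type (i) in Theorem~\ref{MaximalSubgroupsNotFixingPSU}) so that the normalizer argument is not disturbed by some extra invariant subspace; and one must verify that the Hesse configuration attached to $H$ is genuinely unique, so that $N_{\PGU(3,q)}(H)$ has nothing else to act on. Modulo these checks — which are essentially the classical geometry of the Hesse configuration plus the observation $\gcd$-type constraint $9\mid(q+1)$ ensuring the configuration is defined over the right field — the argument closes.
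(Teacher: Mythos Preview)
Your reduction to $|M|=648$ with $H\trianglelefteq M$ is correct and matches the paper's opening. From there the routes diverge. The paper does not argue that $H$ is self-normalizing; instead it invokes a MAGMA computation listing the three abstract groups of order $648$ that contain a copy of $\PGU(3,2)$, and checks that each of them has a normal subgroup of order $2$ or $3$ --- so $M$ would fix a point or a triangle and hence lie in one of the geometric maximal subgroups $M_1,\dots,M_4$, contradicting its maximality.

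Your route via the Hesse configuration is more conceptual and would avoid the computer check, but it rests on the step you yourself flag and do not prove: that $H$ stabilizes a \emph{unique} Hesse configuration, so that anything normalizing $H$ is trapped in its order-$216$ projective stabilizer. This is a genuine gap, though fillable: the nine configuration points are exactly the centers of the nine involutions of $H$ (each involution is a homology; in the $(C_3)^2\rtimes\SL(2,3)$ model the unique involution of $\SL(2,3)$ fixes a single inflection point and must have it as center since all four Hesse lines through that point are preserved, and the translations carry this over to the remaining eight), so the configuration is intrinsic to $H$ and your normalizer argument then closes. An even shorter bypass, in the spirit of the paper's later Propositions \ref{max720}--\ref{maxa6}: since $Z(H)=1$ and $|\Aut(\PGU(3,2))|=432<648$, the conjugation map $M\to\Aut(H)$ has nontrivial kernel, producing a normal $C_3$ in $M$ disjoint from $H$, and hence the same fixed-point-or-triangle contradiction --- with neither MAGMA nor Hesse geometry.
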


\begin{proof}
Since $H\leq M\cap\PSU(3,q)\leq \PSU(3,q)$ and $H$ is maximal in $\PSU(3,q)$, either $M\cap\PSU(3,q)=H$ or $M\cap\PSU(3,q)=\PSU(3,q)$.
As $\PSU(3,q)$ is maximal in $\PGU(3,q)$, the claim is proved once that $M\cap\PSU(3,q)=\PSU(3,q)$.
Assume by contradiction that $M \cap \PSU(3,q)=H$.
This implies $M\not\subseteq\PSU(3,q)$ as $M$ is maximal in $\PGU(3,q)$. Also, $\PSU(3,q)$ is maximal and normal in $\PGU(3,q)$ with index $3$. Then $\PGU(3,q)=\langle M,\PSU(3,q) \rangle = \PSU(3,q)M$ and
\begin{equation}\label{trevolte}
3\cdot|\PSU(3,q)|=|\PGU(3,q)|=|\PSU(3,q)M|=\frac{|\PSU(3,q)|\cdot|M|}{|H|},
\end{equation}
so that $|M|=3|H|=648$.
By direct checking with MAGMA \cite{MAGMA}, there are exactly $3$ groups of order $648$ containing a subgroup of order $216$ isomorphic to $\PGU(3,2)$, namely $SmallGroup(648,i)$ with $i \in \{702,756,757\}$.
\begin{itemize}
\item {$M\cong SmallGroup(648,702)$.} In this case $M$ has a normal subgroup of order $3$, say $\langle \alpha \rangle$; hence, $M$ acts on the points fixed by $\alpha$. By Lemma \ref{classificazione}, $\alpha$ is either of type (A), or (B1), or (B3); hence, either $M$ fixes a pole-polar pair $(P,\ell)$ with $P\in\PG(2,q^2)\setminus\cH_q$, or $M$ stabilizes a self-polar triangle $\{P_1,P_2,P_3\}\subseteq\PG(2,q^2)\setminus\cH_q$, or $M$ stabilizes a Frobenius-invariant triangle $\{P_1,P_2,P_3\}\subseteq\cH_q(\mathbb{F}_{q^6})\setminus\cH_q(\mathbb{F}_{q^2})$. Then $M$ is properly contained in one of the maximal subgroups $M_1,M_2,M_3,M_4$ described in Theorem \ref{MaximalSubgroupsPGU}, a contradiction to the maximality of $M$.
\item {$M\cong SmallGroup(648,756)$.} In this case $M$ has a normal subgroup of order $2$, say $\langle \alpha \rangle$; hence, $M$ acts on the points fixed by $\alpha$. By Lemma \ref{classificazione}, $\alpha$ is of type (A) and fixes a point $P \in PG(2,q^2) \setminus \cH_q$. Thus, $M$ fixes $P$ and is properly contained in the maximal subgroup $M_2$ of $\PGU(3,q)$, a contradiction.
\item {$M\cong SmallGroup(648,757)$.} In this case $M$ has a normal subgroup of order $2$, and a contradiction follows as in the previous case.
\end{itemize}
Then $M\cap\PSU(3,q)=\PSU(3,q)$ and the claim is proved.
\end{proof}

%Even if it is not necessary to investigate the genera of quotient curves $\cH_q/K$ where $K \leq H \cong \PSU(3,2)$, as one would obtain the same formulas as obtained in the previous section for subgroups of $\PGU(3,2)$, also in this case can be interesting to analyze the structure of a maximal subgroup of $\PGU(3,q)$ containing the Hessian subgroup of order $72$ isomorphic to $\PSU(3,2)$. It turns out that even if $\PSU(3,q)$ does not contain a subgroup isomorphic to the Hessian group $\PGU(3,2)$ of order $216$, a maximal subgroup $M$ of $\PGU(3,q)$ containing $\PSU(3,2)$ is indeed isomorphic to $\PGU(3,2)$ and $M \cap \PSU(3,q) \cong \PSU(3,2)$.

\begin{proposition} \label{maxpsu32}
Let $q=p^n$ be a power of an odd prime $p$ such that $3 \mid (q+1)$ and $9 \nmid (q+1)$. Let $H\cong\PSU(3,2)$ be a maximal subgroup of $\PSU(3,q)$ isomorphic to the Hessian group of order $72$, and $M$ be a maximal subgroup of $\PGU(3,q)$ containing $H$. Then $M \cong \PGU(3,2)$ is isomorphic to the Hessian group of order $216$.
\end{proposition}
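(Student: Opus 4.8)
The plan is to mimic the structure of the proof of Proposition \ref{HessianInPSU}, but now the relevant maximality is that of $\PGU(3,2)$ inside $\PGU(3,q)$ rather than that of $\PSU(3,q)$. First I would record the easy reductions. Since $H\cong\PSU(3,2)$ is maximal in $\PSU(3,q)$ and $H\le M\cap\PSU(3,q)\le\PSU(3,q)$, we have either $M\cap\PSU(3,q)=H$ or $M\cap\PSU(3,q)=\PSU(3,q)$. In the latter case $M\supseteq\PSU(3,q)$, so by maximality of $\PSU(3,q)$ in $\PGU(3,q)$ either $M=\PSU(3,q)$ — impossible, since $H$ is maximal but not normal in $\PSU(3,q)$ while $M=\PSU(3,q)$ would force $M$ not to be among the subgroups containing $H$ properly in the required way, more precisely $\PSU(3,2)$ is \emph{not} maximal in $\PGU(3,q)$'s subgroup $\PSU(3,q)$? — no wait, it is maximal there; so I must instead argue $M=\PSU(3,q)$ cannot occur because $\PSU(3,2)$ is not maximal in $\PSU(3,q)$ is false. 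Let me restate: if $M\cap\PSU(3,q)=\PSU(3,q)$ then $M=\PSU(3,q)$ or $M=\PGU(3,q)$; but $M=\PGU(3,q)$ is excluded since $M$ is a proper maximal subgroup, and $M=\PSU(3,q)$ is excluded because then $M$ would be maximal in $\PGU(3,q)$ yet $H=\PSU(3,2)$ is properly contained in the maximal subgroup $\PGU(3,2)$ of $\PSU(3,q)$... actually $\PGU(3,2)\not\le\PSU(3,q)$ when $3\mid(q+1)$. The cleanest route: exhibit $\PGU(3,2)\le\PGU(3,q)$ explicitly containing $H$, so that $M$ maximal and $H\le M$ forces $M\cap\PSU(3,q)$ to be a subgroup of $\PSU(3,q)$ containing $H$ and not equal to $\PSU(3,q)$ unless $\PGU(3,2)\le\PSU(3,q)$, contradiction.

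So the key first step is to show that $H\cong\PSU(3,2)\le\PSU(3,q)$ is contained in a copy of $\PGU(3,2)$ inside $\PGU(3,q)$. For this I would use that $\PSU(3,2)$, the Hessian group of order $72$, has its normalizer in $\PGU(3,q)$ at least as large as $\PGU(3,2)$ (order $216$): the Hessian configuration it preserves is defined over the prime field data, and the full stabilizer in $\PGU(3,\bar{\mathbb F}_q)$ of the relevant structure (the nine inflexion points of a Hesse pencil, together with the $12$ lines) is exactly $\PGU(3,2)\cong$ Hessian group of order $216$, which embeds in $\PGU(3,q)$ precisely when $3\mid(q+1)$ because that is the condition for the primitive cube roots of unity needed for the Hessian configuration to be realized with the unitary polarity. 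I would make this precise by quoting the known embedding $\PGU(3,2)\hookrightarrow\PGU(3,q)$ valid under $3\mid(q+1)$ and noting $\PGU(3,2)\cap\PSU(3,q)=\PSU(3,2)=H$ by the index-$3$ relation, since $|\PGU(3,2)|/|\PSU(3,2)|=3=[\PGU(3,q):\PSU(3,q)]$.

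Given that embedding, the argument concludes quickly. Let $\widetilde H\cong\PGU(3,2)$ be such a subgroup of $\PGU(3,q)$ with $\widetilde H\cap\PSU(3,q)=H$. Since $M$ is maximal in $\PGU(3,q)$ and contains $H$, consider $M\cap\PSU(3,q)$: it contains $H$, so it is either $H$ or $\PSU(3,q)$. If $M\cap\PSU(3,q)=\PSU(3,q)$ then $M=\PSU(3,q)$ or $M=\PGU(3,q)$; the latter is excluded, and the former is excluded because then $\widetilde H$ would not be contained in $M$ yet $\langle M,\widetilde H\rangle$ would be a proper subgroup strictly between $M$ and $\PGU(3,q)$ — contradiction with maximality of $M=\PSU(3,q)$, since $\widetilde H\not\le\PSU(3,q)$ forces $\langle\PSU(3,q),\widetilde H\rangle=\PGU(3,q)$, and that is fine, so actually $M=\PSU(3,q)$ is not directly contradictory; instead I use that $M$ was assumed to contain $H$ and to be maximal, and $H\le\widetilde H\lneq\PGU(3,q)$, so if $M=\PSU(3,q)$ then $M$ and $\widetilde H$ are two distinct maximal subgroups containing $H$, which is allowed. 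Therefore I must instead argue directly: $M$ maximal, $H\le\widetilde H$, and $\widetilde H$ itself is maximal in $\PGU(3,q)$ (this is the content of the theorem being bootstrapped — or I prove it here by the same order-$648$ MAGMA analysis as in Proposition \ref{HessianInPSU}), so $H\le M$ and $H\le\widetilde H$; to show $M\cong\PGU(3,2)$ I show $M=\widetilde H$. Since $\widetilde H$ is maximal and $H\lneq\widetilde H$, and $M$ is any maximal subgroup containing $H$, it suffices to rule out $M\ne\widetilde H$: if $M\ne\widetilde H$ then $M\cap\PSU(3,q)=H$ (it cannot be $\PSU(3,q)$ by the argument that $M=\PSU(3,q)$ would contain $H$ but then $\PGU(3,2)=\langle H,\text{any element of }\widetilde H\setminus\PSU(3,q)\rangle\not\le M$, which is consistent — so I need the $648$ analysis). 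Concretely: $M\not\le\PSU(3,q)$, hence $\PGU(3,q)=\PSU(3,q)M$ and the index computation gives $|M|=3|H|=216$; then $M$ is a group of order $216$ containing $H\cong\PSU(3,2)$, and by checking (MAGMA, or the classification of groups of order $216$ with a subgroup $\cong\PSU(3,2)$ of index $3$) $M\cong\PGU(3,2)$, the Hessian group of order $216$. The main obstacle is the bookkeeping in the case $M\le\PSU(3,q)$ versus $M\not\le\PSU(3,q)$: one must cleanly dispose of the possibility $M=\PSU(3,q)$, which is done exactly as in \eqref{trevolte} — if $M\cap\PSU(3,q)=\PSU(3,q)$ and $M$ is a proper maximal subgroup then $M=\PSU(3,q)$, but then $M$ does not contain the subgroup $\widetilde H\cong\PGU(3,2)$ which does contain $H$; this does not yet contradict anything, so the real point is that we are asked to prove $M\cong\PGU(3,2)$ for \emph{the} maximal $M$ containing $H$, and there could a priori be two — hence I instead observe that $\widetilde H\cong\PGU(3,2)$ is itself maximal (proved via the order-$648$ MAGMA check, identical to Proposition \ref{HessianInPSU}, since any overgroup of order $648$ leads to one of $SmallGroup(648,i)$, $i\in\{702,756,757\}$, each forced to fix a point or triangle and hence contained in some $M_j$), so $M=\widetilde H$ is the unique maximal subgroup of $\PGU(3,q)$ containing $H$ that is not $\PSU(3,q)$, and since $H$ is not contained in $\PSU(3,q)$ as a maximal-in-$\PGU$ subgroup... — the clean final statement is: $H\le\widetilde H\lneq\PGU(3,q)$ with $\widetilde H$ maximal, so $M=\widetilde H\cong\PGU(3,2)$.
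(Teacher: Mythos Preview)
Your proposal wanders through several false starts and never settles into a proof; in particular the closing line ``$\widetilde H$ maximal, so $M=\widetilde H$'' is a non sequitur, since a given $H$ can lie in more than one maximal subgroup. Two concrete issues stand out. First, your worry about $M=\PSU(3,q)$ is justified and cannot be resolved: $\PSU(3,q)$ \emph{is} a maximal subgroup of $\PGU(3,q)$ containing $H$. The paper's proof does not address this case either; what it actually establishes is that $\PGU(3,2)$ embeds in $\PGU(3,q)$ as a maximal subgroup, which is the content needed for Theorem~\ref{MaximalSubgroupsPGU}, and the proposition should be read in that light. Second, your order-$648$ check for the maximality of $\widetilde H\cong\PGU(3,2)$ rests on a false premise. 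If $M'\lneq\PGU(3,q)$ satisfies $M'\supsetneq\widetilde H$, then $M'\cap\PSU(3,q)\supseteq H$, so by maximality of $H$ in $\PSU(3,q)$ either $M'\cap\PSU(3,q)=H$, giving $|M'|\le 3|H|=216=|\widetilde H|$ and hence $M'=\widetilde H$, or $M'\supseteq\PSU(3,q)$, giving $M'\supseteq\langle\PSU(3,q),\widetilde H\rangle=\PGU(3,q)$. No group of order $648$ ever appears, and maximality of $\widetilde H$ follows at once from $\widetilde H\not\le\PSU(3,q)$.

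The paper argues maximality by a different mechanism. After constructing $M\cong\PGU(3,2)$ explicitly via the generators in \cite[Theorem~9]{M} and verifying that they preserve the Fermat model of $\cH_q$, it invokes Mitchell's structure theory: $M$ is the \emph{unique} tame subgroup of $\PGU(3,q)$ that contains order-$3$ homologies and fixes no point, line, or triangle, so any proper overgroup $M'\supsetneq M$ must be non-tame and hence contain $p$-elements; then \cite[Theorems~18 and 28]{M} force $M'\supseteq\PSU(3,q)$ and therefore $M'=\PGU(3,q)$. Your index-$3$ route, once corrected as above, is shorter and avoids these citations. What neither your sketch nor the paper's proof supplies is that every maximal $M\ne\PSU(3,q)$ containing $H$ is isomorphic to $\PGU(3,2)$; for that one would genuinely need the order-$216$ classification you allude to but do not carry out.
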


\begin{proof}
Arguing as in the proof of Theorem 9 in \cite{M}, one can explicitly construct a subgroup $M$ of $\PGL(3,q^2)$ containing $H$ as a normal subgroup of index $3$ and isomorphic to the Hessian group $\PGU(3,2)$ of order $216$.
By direct computation, the generators of $M$ leave invariant the Hermitian curve $\cH_q$ in its Fermat equation $X^{q+1}+Y^{q+1}+Z^{q+1}=0$; hence, $M$ is a subgroup of $\PGU(3,q)$.
Also, $M$ is not contained in $\PSU(3,q)$.
Now we show that $M$ is maximal in $\PGU(3,q)$.

Suppose by contradiction there there exists a proper subgroup $M^\prime$ of $\PGU(3,q)$ containing $M$ properly.
By \cite[Theorem 9]{M}, $M$ is the unique tame subgroup of $\PGU(3,q)$ such that $M$ does not leave invariant a point, or a line, or a triangle and $M$ contains homologies of order $3$.
Since $M\subseteq M^\prime$, also $M^\prime$ does not leave invariant a point, or a line, or a triangle and $M^\prime$ contains homologies of order $3$. Then $M^\prime$ contains $p$-elements, which are clearly elements of $\PSU(3,q)$ because $p\nmid[\PGU(3,q):\PSU(3,q)]$.
From \cite[Theorem 18]{M}, $M^\prime$ contains elements of type (C) because $M^\prime$ cannot contain only $p$-elements of type (D). From \cite[Theorem 28]{M}, $M^\prime$ contains the entire $\PSU(3,q)$; since $M^\prime$ is not contained in $\PSU(3,q)$, this implies $M^\prime=\PGU(3,q)$, a contradiction.
\end{proof}

\subsection{$\PGL(2,q)$ preserving a conic}

\begin{proposition} \label{maxPGLconica}
Let $q=p^n$ be a power of an odd prime $p$ with $3 \mid (q+1)$. Let $H$ be a maximal subgroup of $\PSU(3,q)$ such that $H \cong \PGL(2,q)$ and $H$ fixes a conic $\cC$. Let $M$ be a maximal subgroup of $\PGU(3,q)$ containing $H$. Then $M \cong \PSU(3,q)$.
\end{proposition}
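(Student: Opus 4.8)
The plan is to follow the template of Proposition~\ref{HessianInPSU}. Since $H$ is maximal in $\PSU(3,q)$ and $H\le M\cap\PSU(3,q)\le\PSU(3,q)$, either $M\cap\PSU(3,q)=\PSU(3,q)$ --- and then $M=\PSU(3,q)$, because $\PSU(3,q)$ is maximal in $\PGU(3,q)$ while $M$ is a proper subgroup --- or $M\cap\PSU(3,q)=H$, which I will rule out. So suppose $M\cap\PSU(3,q)=H$. Then $M\not\subseteq\PSU(3,q)$: otherwise $M=M\cap\PSU(3,q)=H$ would be a proper subgroup of $\PSU(3,q)$, which is itself a proper subgroup of $\PGU(3,q)$, contradicting the maximality of $M$.

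The first thing to prove is that $\cC$ is the \emph{unique} $H$-invariant conic of $\PG(2,q^2)$. The group $H$ fixes no point and no line of $\PG(2,q^2)$ --- for a subgroup of $\PGU(3,q)$ these two conditions are equivalent via the unitary polarity, and they hold here since $H$ is, up to conjugacy, the maximal subgroup of type~(ii) in Theorem~\ref{MaximalSubgroupsNotFixingPSU} --- so $H$ acts irreducibly on $\PG(2,q^2)$, and in fact absolutely irreducibly: otherwise the commuting algebra of the underlying $3$-dimensional module would be $\mathbb{F}_{q^6}$, forcing $H\cong\PGL(2,q)$, which is non-abelian as $q\ge5$, to be cyclic. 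By Schur's lemma the space of $H$-invariant symmetric bilinear forms on this module is then at most one-dimensional; as $p$ is odd and a non-degenerate conic is determined up to a scalar by its quadratic form, $\cC$ is the only $H$-invariant conic. Since $H\trianglelefteq M$, every $m\in M$ carries $\cC$ to a conic invariant under $mHm^{-1}=H$, forcing $m(\cC)=\cC$; hence $M\le\mathrm{Stab}_{\PGU(3,q)}(\cC)$.

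The second, decisive step is to show that $\mathrm{Stab}_{\PGU(3,q)}(\cC)\subseteq\PSU(3,q)$; together with the previous step this yields $M\subseteq\PSU(3,q)$, the contradiction sought. I would choose coordinates so that $\cH_q:X^{q+1}+Y^{q+1}+Z^{q+1}=0$ and $\cC:X^2+Y^2+Z^2=0$ --- this is legitimate because, up to conjugacy, $H$ is the orthogonal group $\mathrm{PSO}(3,q)$ in these coordinates (see \cite{M,H}), and the inclusion to be proved is conjugation-invariant. Let $[g]\in\mathrm{Stab}_{\PGU(3,q)}(\cC)$, with representative $g$ in $\mathrm{GU}(3,q)$, so $\overline{g}^{T}g=I$; preserving $\cC$ forces $g^{T}g=\lambda I$ for some $\lambda\in\mathbb{F}_{q^2}^{*}$. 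Combining the two identities gives $g=\lambda\,\overline{g}$, whence $\lambda\overline{\lambda}=1$; writing $\lambda=\mu^{1-q}$ with $\mu\in\mathbb{F}_{q^2}^{*}$ and replacing $g$ by $\mu^{-1}g$, one may assume $\overline{g}=g$, so that $[g]\in\PGL(3,q)$ with $\det g\in\mathbb{F}_q^{*}$. Now $3\mid(q+1)$ implies $3\nmid(q-1)=|\mathbb{F}_q^{*}|$, so every element of $\mathbb{F}_q^{*}$ is a cube in $\mathbb{F}_{q^2}^{*}$; passing to a unitary representative of $[g]$ multiplies its determinant by a cube, hence the determinant of a $\mathrm{GU}(3,q)$-representative of $[g]$ is a cube in $\mathbb{F}_{q^2}^{*}$, which is exactly the condition for $[g]\in\PSU(3,q)$. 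This gives $\mathrm{Stab}_{\PGU(3,q)}(\cC)\subseteq\PSU(3,q)$ and finishes the proof.

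The hard part will be this last step. Once $\cC$ is pinned down as the unique $H$-invariant conic the rest is formal, but one must run the linear algebra over $\mathbb{F}_{q^2}$ with care, reduce to a matrix defined over $\mathbb{F}_q$, and track the determinant modulo cubes --- and this is precisely where the hypothesis $3\mid(q+1)$ enters, since it is what guarantees that $\mathbb{F}_q^{*}$ has no non-trivial cubic quotient.
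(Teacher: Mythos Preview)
Your proof is correct, and it takes a genuinely different route from the paper's.

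The paper argues as follows: assuming $M\cap\PSU(3,q)=H$, one gets $|M|=3|H|$ with $H\trianglelefteq M$; an external reference then forces $M\cong\PGL(2,q)\rtimes C_3$, and the order-$3$ generator $\alpha$ of $C_3$ is analysed via the element classification of Lemma~\ref{classificazione}. Each possible type (A), (B1), (B3) for $\alpha$ is eliminated by a separate geometric argument (a secant-line count for homologies; a diagonal-matrix computation showing type~(B1) elements of order $3$ lie in $\PSU(3,q)$; an orbit--stabilizer argument on $\cH_q\cap\cC$ for type~(B3)).

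Your approach bypasses all of this. Using absolute irreducibility and Schur's lemma you pin down $\cC$ as the \emph{unique} $H$-invariant conic, so the normalizer $M$ of $H$ must preserve $\cC$; then a clean matrix computation with the commuting Hermitian and orthogonal forms shows directly that $\mathrm{Stab}_{\PGU(3,q)}(\cC)\subseteq\PSU(3,q)$, the key point being that the representative can be taken in $\GL(3,q)$ and that $3\nmid(q-1)$ makes every element of $\mathbb{F}_q^{*}$ a cube. This is more self-contained (no appeal to Lemma~\ref{classificazione} or to \cite{GKeven}) and in fact proves the sharper statement $\mathrm{Stab}_{\PGU(3,q)}(\cC)=H$. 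The paper's case-by-case method, on the other hand, is what generalises to the remaining maximal subgroups treated in Section~\ref{sec:sottogruppimassimaliPGU}, where no such convenient invariant object is available.
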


\begin{proof}
Since $H\leq M\cap\PSU(3,q)\leq\PSU(3,q)$, we have either $M\cap\PSU(3,q)=H$ or $M\cap\PSU(3,q)=\PSU(3,q)$, and the claim is proved once that $M\cap\PSU(3,q)=\PSU(3,q)$.
Assume by contradiction that $M\cap\PSU(3,q)=H$. As in the proof of Proposition \ref{HessianInPSU}, Equation \eqref{trevolte} holds and $|M|=3\cdot|\PGL(2,q)|$.
Since $\PSU(3,q)$ is normal in $\PGU(3,q)$, also $H$ is normal in $M$. Hence $M$ is a degree $3$ normal extension of $\PGL(2,q)$.
From \cite[Remark 6.12]{GKeven}, $M$ is a semidirect product $M=\PGL(2,q) \rtimes C_3$ where $C_3=\langle \alpha \rangle$ is a cyclic group of order $3$ and $3 \mid n$.
Let $I=\cH_q\cap\cC$ be the set of the $q+1$ points of intersection between $\cH_q$ and $\cC$. The set $I$ is the unique orbit of size $q+1$ of $H$ on $\cH_q$; see \cite[Lemma 3.1]{CE}.
Since $M$ normalizes $H$, $M$ acts on the set of orbits of $H$ on $\cH_q$ having the same size; hence, $M$ acts on $I$.
By Lemma \ref{classificazione}, $\alpha$ is either of type (A), or of type (B1), or of type (B3).
\begin{itemize}
\item[(1)] Assume that $\alpha$ is a homology, with center $P$ and axis $\ell$. Recall that $\cC$ is irreducible, and let $r$ be a secant line to $\cC$ passing through $P$. Then the size of $(r\cap\cH_q)\setminus\ell$ is $1$ or $2$, according to $r\cap\ell\in\cC$ or $r\cap\ell\notin\cC$, respectively.
Since $r$ and $\ell$ are fixed by $\alpha$, we have that $\alpha$ acts on the $1$ or $2$ points of $(r\cap\cH_q)\setminus\ell$. This is a contradiction to the action on the plane of $\alpha$, which has long orbits of size $3$ out of $\ell$ and $P$.
\item[(2)] Assume that $\alpha$ is of type (B1). Since $o(\alpha)=3$, this implies that $\alpha\in\PSU(3,q)$. In fact, we can use the Fermat model $X^{q+1}+Y^{q+1}+Z^{q+1}=0$ of $\cH_q$ and assume up to conjugation that $\alpha$ fixes the fundamental triangle, so that $\alpha$ is represented by a diagonal matrix $\diag(\lambda,\mu,1)$ with $\lambda^{3}=\mu^{3}=1$. As $\alpha$ is of type (B1), we have $\lambda\ne1$, $\mu\ne1$, $\mu\ne\lambda$. Then $\mu=\lambda^{-1}$ and $\det(\alpha)=1$, so that $\alpha\in\PSU(3,q)$. This contradicts $M\not\subseteq\PSU(3,q)$.
\item[(3)] Assume that $\alpha$ is of type (B3). By the Orbit-Stabilizer Theorem, the stabilizer $M_P$ in $M$ of a point $P\in I$ has order $3q(q-1)$. From $P\in\cH_q$ and \cite[Lemma 11.44]{HKT}, we have that $M_P$ is a semidirect product $M_P={M}_P^1 \rtimes {M}_P^2$, where ${M}_P^1$ is the Sylow $p$-subgroup of $M_P$ and ${M}_P^2$ is cyclic of order $3(q-1)$. Analogously, the stabilizer $H_P$ of $P$ in $H$ satisfies $H_P={H}_P^1 \rtimes {H}_P^2$ with $|{H}_P^2|=q-1$; up to conjugation, ${M}_P^2$ contains ${H}_P^2$. Since $3 \nmid (q-1)$, there exists an element $\beta \in {M}_P^2 \setminus H$ of order $3$. As $\beta$ fixes an $\mathbb{F}_{q^2}$-rational point $P$ of $\cH_q$, $\beta$ is a homology by Lemma \ref{classificazione}. Then $M=H\rtimes\langle\beta\rangle$ and a contradiction is obtained as in Case (1).
\end{itemize}
\end{proof}

\subsection{$\PSL(2,7)$ when $p=7$ or $\sqrt{-7}\notin\mathbb{F}_q$}

\begin{proposition} \label{maxpsl27}
Let $q=p^n$ be a power of an odd prime $p$ such that $3 \mid (q+1)$ and either $p=7$ or $\sqrt{-7}\notin\mathbb{F}_q$. Let $H$ be a maximal subgroup of $\PSU(3,q)$ with $H \cong \PSL(2,7)$, and $M$ be a maximal subgroup of $\PGU(3,q)$ containing $H$. Then $M \cong \PSU(3,q)$.
\end{proposition}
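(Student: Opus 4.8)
The plan is to mimic the structure of the proof of Proposition \ref{maxPGLconica}. Since $H\cong\PSL(2,7)$ is maximal in $\PSU(3,q)$ and $H\leq M\cap\PSU(3,q)\leq\PSU(3,q)$, we have either $M\cap\PSU(3,q)=H$ or $M\cap\PSU(3,q)=\PSU(3,q)$; in the latter case the maximality of $\PSU(3,q)$ in $\PGU(3,q)$ (Theorem \ref{MaximalSubgroupsPGU}(5)) gives $M=\PSU(3,q)$, so it suffices to rule out $M\cap\PSU(3,q)=H$. Assuming this for contradiction, $M\not\subseteq\PSU(3,q)$ by maximality of $M$, and since $\PSU(3,q)$ is normal of index $3$ in $\PGU(3,q)$ we get $\PGU(3,q)=\PSU(3,q)M$; the index computation in Equation \eqref{trevolte} then forces $|M|=3|H|=3\cdot168=504$. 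Moreover $H=M\cap\PSU(3,q)$ is normal in $M$ (being the preimage of a normal subgroup under $M\hookrightarrow\PGU(3,q)\to\PGU(3,q)/\PSU(3,q)$), so $M$ is a degree-$3$ normal extension of $\PSL(2,7)$.

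Next I would identify $M$ concretely. Since $\mathrm{Out}(\PSL(2,7))\cong C_2$, the group $M$ of order $504$ with $\PSL(2,7)\trianglelefteq M$ and $M/\PSL(2,7)\cong C_3$ cannot act by outer automorphisms on the simple group $\PSL(2,7)$; the conjugation map $M\to\mathrm{Aut}(\PSL(2,7))$ has image containing $\mathrm{Inn}(\PSL(2,7))\cong\PSL(2,7)$ and index dividing $\gcd(3,2)=1$ beyond it in a way that forces the quotient $C_3$ to act trivially, so $M$ has a central subgroup of order $3$ intersecting $H$ trivially, whence $M\cong\PSL(2,7)\times C_3$ (the extension splits because $\gcd(3,|H|)=\gcd(3,168)=3$... actually $3\mid 168$, so one must instead argue: the $C_3$ acts trivially on $H$, so $\langle H,\alpha\rangle$ for a preimage $\alpha$ of a generator has $\alpha$ centralizing $H$; replacing $\alpha$ by $\alpha h$ one arranges $\alpha^3=1$ using that the Schur multiplier / relevant cohomology with the trivial action and the fact that $H$ has trivial center kills the obstruction, giving $M\cong H\times\langle\alpha\rangle$). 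Thus $M=H\times\langle\alpha\rangle$ with $\alpha$ of order $3$ commuting with $H$. Alternatively, and more robustly, I would just invoke a direct MAGMA check as in Proposition \ref{HessianInPSU}: there is a unique group of order $504$ containing $\PSL(2,7)$, namely $\PSL(2,7)\times C_3$, and analyze it directly.

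The geometric contradiction then comes from the central element $\alpha$ of order $3$. Since $\langle\alpha\rangle$ is normal in $M$ (in fact central), $M$ fixes the set of fixed points of $\alpha$ in $\PG(2,q^6)$. By Lemma \ref{classificazione}, $\alpha$ is of type (A), (B1), or (B3) since it has order $3$ coprime to $p$ (note $p=7$ is allowed, but $3\ne 7$ so $\alpha$ is still semisimple). If $\alpha$ is of type (A) it fixes a pole-polar pair $(P,\ell)$, so $M\leq M_2$; if of type (B1) it stabilizes a self-polar triangle, so $M\leq M_3$; if of type (B3) it stabilizes a Frobenius-invariant triangle, so $M\leq M_4$. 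In every case $M$ is properly contained in one of the maximal subgroups of $\PGU(3,q)$ from Theorem \ref{MaximalSubgroupsPGU} (proper because $|M_2|,|M_3|,|M_4|$ are all much larger than $504$ for the relevant $q$, and $M$ does not fix a point or triangle would need... actually the containment itself contradicts the maximality of $M$ as a subgroup of $\PGU(3,q)$, since $M_i\ne\PGU(3,q)$). This contradiction shows $M\cap\PSU(3,q)=\PSU(3,q)$, hence $M\cong\PSU(3,q)$.

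The main obstacle I anticipate is the middle step: pinning down the isomorphism type of the order-$504$ group $M$ and, in particular, producing the \emph{central} order-$3$ element $\alpha$ cleanly. One must rule out that the $C_3$-quotient acts nontrivially on $\PSL(2,7)$ — but $\mathrm{Out}(\PSL(2,7))$ has order $2$, coprime to $3$, so any automorphism of order dividing $3$ is inner, and a short argument (replace $\alpha$ by $\alpha h^{-1}$ where $h\in H$ realizes the inner automorphism induced by $\alpha$) makes $\alpha$ centralize $H$; then $\alpha^3\in H\cap Z(M)=Z(H)=1$ (using that $\PSL(2,7)$ has trivial center), so $\langle\alpha\rangle\cong C_3$ is a central complement and $M\cong\PSL(2,7)\times C_3$. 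If one prefers to avoid this bookkeeping, the MAGMA \texttt{SmallGroup} enumeration used in Proposition \ref{HessianInPSU} dispatches it immediately. Once $\alpha$ is in hand, the rest is a routine invocation of Lemma \ref{classificazione} and Theorem \ref{MaximalSubgroupsPGU}.
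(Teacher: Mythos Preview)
Your proof is correct and follows the same approach as the paper: reduce to $|M|=3|H|$ with $H\triangleleft M$, show $M\cong\PSL(2,7)\times C_3$, and obtain a contradiction from the central order-$3$ element forcing $H$ (hence $M$) into one of $M_2,M_3,M_4$. The only differences are that the paper cites \cite[Remark 6.12]{GKeven} and \cite[Proposition 1.2(i)]{W} for the isomorphism $M\cong H\times C_3$, whereas your direct argument via $|\mathrm{Out}(\PSL(2,7))|=2$ and $Z(\PSL(2,7))=1$ works just as well, and for the final contradiction the paper simply invokes Theorem~\ref{MaximalSubgroupsNotFixingPSU} applied to $H$ rather than comparing orders with the $M_i$, which cleanly sidesteps your hesitation there.
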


\begin{proof}
Assume by contradiction that $M\not\cong\PSU(3,q)$. Since $\PSU(3,q)$ is maximal in $\PGU(3,q)$ and $H$ is maximal in $\PSU(3,q)$, we have arguing as in the proof of Proposition \ref{HessianInPSU} that $M\cap\PSU(3,q)=H$ and $|M|=3|H|$.
Since $\PSU(3,q)$ is normal in $\PGU(3,q)$, $H$ is normal in $M$. Hence $M$ is a degree $3$ normal extension of $\PSL(2,7)$. From \cite[Remark 6.12]{GKeven} and \cite[Proposition 1.2 (i)]{W} we obtain that $M=H \times C_3$. Hence, $H$ acts on the points fixed by $C_3\triangleleft M$. By Lemma \ref{classificazione}, this means that $H$ is contained in one of the maximal subgroup $M_2,M_3,M_4$ described in Theorem \ref{MaximalSubgroupsPGU}, a contradiction to Theorem \ref{MaximalSubgroupsNotFixingPSU}.
\end{proof}

\subsection{The group $SmallGroup(720,765)$ when $p=5$ and $n$ is odd}

From \cite{M}, $\PSU(3,q)$ has a maximal subgroup $H$ of order $720$ which contains a normal subgroup of order $360$ isomorphic to the alternating group ${\rm A}_6$, when $p=5$ and $q$ is odd power of $p$. The following lemma gives $H$ explicitly in the GAP notation.

\begin{lemma} \label{noneraS6}
Let $H$ be a subgroup of $\PSU(3,q)$ of order $720$ containing the alternating group ${\rm A}_6$ with index $2$, where $p=5$ and $q$ is an odd power of $p$. Then $H$ is isomorphic to the subgroup of ${\rm P \Gamma L}(2,9)$ named {\rm SmallGroup(}$720$,$765${\rm )}.
\end{lemma}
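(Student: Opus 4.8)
The plan is to pin down the isomorphism type of $H$ among the groups of order $720$ having ${\rm A}_6$ as a normal subgroup of index $2$, and then to argue that only one of them is compatible with being a subgroup of $\PSU(3,q)$ (equivalently, of $\PGL(3,q^2)$) for $p=5$. First I would recall that there are exactly three groups of order $720$ containing ${\rm A}_6$ as a normal (index-$2$) subgroup: the symmetric group $S_6$, the Mathieu-type group $M_{10}$ (which is $SmallGroup(720,765)$, a non-split extension $A_6.2$), and the projective semilinear group ${\rm P\Gamma L}(2,9)$. These are precisely the three index-$2$ overgroups of ${\rm A}_6$ corresponding to the three subgroups of index $2$ in ${\rm Out}(A_6)\cong C_2\times C_2$; this is the classical fact about the exceptional outer automorphism of $A_6$, and it can be confirmed directly in MAGMA or GAP (e.g. by listing $NormalSubgroups$ of order $360$ in the three $SmallGroup(720,\cdot)$ entries, or simply by the structure descriptions $S_6$, $M_{10}$, $P\Gamma L(2,9)$).

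Next I would eliminate $S_6$ and $P\Gamma L(2,9)$. The key tool is the classification of elements of $\PGU(3,q)$ by order and type in Lemma \ref{classificazione} together with the fact that, since $p=5$, every $p$-element of $H$ is of type (C) or (D) and every element of order prime to $p$ is semisimple of type (A), (B1), (B2), or (B3). Concretely, I would look at the orders of elements: in $\PSU(3,q)$ an element has order dividing one of $q+1$, $q^2-1$, $q^2-q+1$, or is a $p$-element of order $p$ (or, for $p=2$, up to $4$), so for $p=5$ the possible element orders are tightly constrained, and in particular an element of order $5$ in $\PSU(3,q)$ is a $p$-element of type (C) or (D). One then compares with the element orders occurring in $S_6$, $M_{10}$, and $P\Gamma L(2,9)$: $S_6$ contains elements of order $6$ and transpositions, $P\Gamma L(2,9)$ contains elements of order $8$, whereas $M_{10}$ has element orders $\{1,2,3,4,5,8\}$ with the $5$-elements having the right centralizer structure to embed as type-(C)/(D) elements. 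The cleanest elimination, and the route I would actually take, is: an element of order $8$ in $\PGU(3,q)$ with $p=5$ must be semisimple, hence its order divides $q^2-1$ or $q^2-q+1$; combined with the known embedding data for ${\rm A}_6<\PSU(3,q)$ from \cite{M} (which fixes the conjugacy class and the way $5$-elements sit inside), one checks that the outer involution realizing $S_6$ would have to be a homology of type (A) of order $2$, forcing $H$ to fix a pole–polar pair (contradicting that $H$ fixes no point or triangle, by Theorem \ref{MaximalSubgroupsNotFixingPSU}), while the outer automorphism realizing $P\Gamma L(2,9)$ introduces a field automorphism that is incompatible with $H\le\PGL(3,q^2)$ being a \emph{linear} group over $\mathbb{F}_{q^2}$ in the given representation. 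Either of these, or a direct MAGMA check that $\PSU(3,q)$ contains no subgroup isomorphic to $S_6$ or to $P\Gamma L(2,9)$ for the relevant small $q$ and then a reduction to the field-of-definition argument of \cite{M} for general $q$, finishes the elimination.

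The main obstacle, and the step that needs the most care, is making the elimination of $P\Gamma L(2,9)$ rigorous for all odd powers $q$ of $5$ rather than merely for one small case: one must argue that if ${\rm A}_6<\PSU(3,q)$ is the maximal subgroup described in \cite{M}, then its normalizer in $\PGU(3,q)$ cannot contain a copy of the semilinear group. Here I would invoke that the representation of ${\rm A}_6$ on $\mathbb{F}_{q^2}^3$ is absolutely irreducible and already realized over the prime field $\mathbb{F}_5$ (or over $\mathbb{F}_{25}$), so that its normalizer in $\PGL(3,q^2)$ is generated by ${\rm A}_6$, the scalars, and possibly a single outer automorphism coming from an element of $\PGL(3,q^2)$, not a genuine field automorphism; comparing with the fact that $P\Gamma L(2,9)/A_6$ is generated by the $\mathbb{F}_9$-Frobenius then yields the contradiction. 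Once $S_6$ and $P\Gamma L(2,9)$ are ruled out, $H\cong M_{10}=SmallGroup(720,765)$ is the only remaining possibility, and since $M_{10}$ visibly sits inside ${\rm P\Gamma L}(2,9)$ (as the subgroup generated by $A_6$ and the product of a field automorphism with a suitable diagonal automorphism), the identification with "the subgroup of ${\rm P\Gamma L}(2,9)$ named $SmallGroup(720,765)$" is immediate. I would also double-check, via MAGMA, that $SmallGroup(720,765)$ does embed in $\PSU(3,5^k)$ for small odd $k$, so that the lemma is not vacuous; this is consistent with \cite{M}.
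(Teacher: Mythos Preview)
Your enumeration of the candidate groups is incorrect, and this error propagates into the eliminations.

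First, there are \emph{four} groups of order $720$ containing $A_6$ with index $2$, not three: besides the three almost simple extensions $A_6.2$ coming from the three involutions in $\mathrm{Out}(A_6)\cong C_2\times C_2$, there is also the split extension with trivial action, $A_6\times C_2$. The paper lists all four and disposes of $A_6\times C_2$ immediately (its central involution is a homology of type~(A), so $H$ would fix a point off $\cH_q$ and hence lie in a maximal subgroup $M_2$, contradicting $|H|=720$). You never mention this case.

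Second, your third candidate is misidentified: $P\Gamma L(2,9)$ has order $1440$, not $720$. The three almost simple groups $A_6.2$ are $S_6$, $\PGL(2,9)$, and $M_{10}$; the group you must rule out alongside $S_6$ is $\PGL(2,9)$, and your ``field automorphism incompatible with a linear group'' argument simply does not apply to it.

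Third, your sketched eliminations do not work as stated. For $S_6$: an outer involution (a transposition) is indeed of type~(A), but it is not normal in $S_6$, so its being a homology does not force $H$ to fix its centre. The paper instead observes that $S_6$ contains an elementary abelian $2$-group of rank $3$, whereas in $\PGU(3,q)$ with $q$ odd at most three involutions (all homologies) can commute pairwise. For $\PGL(2,9)$ (which is $SmallGroup(720,764)$): the paper's argument is genuinely delicate and has nothing to do with field automorphisms. One notes that $\PGL(2,9)$ contains both a cyclic group $C_{10}$ and a dihedral group $D_5$, and has a single conjugacy class of elements of order $5$. In $\PSU(3,q)$ with $p=5$, the $5$-part of a type~(E) element of order $10$ is necessarily an elation (type~(C)); on the other hand, a $5$-element inverted by an involution must be of type~(D), because an involution normalising an elation has to commute with it. These two conclusions are incompatible within a single conjugacy class, which kills $\PGL(2,9)$.

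In short, the overall strategy (list the possible isomorphism types and eliminate all but $M_{10}$) is the same as the paper's, but the concrete list and the elimination arguments need to be replaced by the ones above.
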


\begin{proof}
By direct checking, there are exactly $4$ groups of order $720$ containing a subgroup isomorphic to ${\rm A}_6$, namely the direct product ${\rm A}_6 \times C_2$, the symmetric group ${\rm S}_6$, a semidirect product $SmallGroup(720,764)\cong {\rm A}_6\rtimes C_2$, and the $SmallGroup(720,765)$.
\begin{itemize}
\item Suppose that $M\cong{\rm A}_6\times C_2$, say $C_2=\langle\alpha\rangle$. Then by Lemma \ref{classificazione} $\alpha$ is of type (A). Since $C_2$ is normal in $H$, $H$ fixes the center of $\alpha$ and is contained in the maximal subgroup $M_2$ of $\PGU(3,q)$ described in Theorem \ref{MaximalSubgroupsPGU}. By the maximality of $H$ in $\PSU(3,q)$ we have $H=M_2\cap\PSU(3,q)$, which is a contradiction to $720=|H|\ne|M_2\cap\PSU(3,q)|=q(q+1)^2(q-1)/3$.
\item Suppose that $H\cong {\rm S}_6$. Then $H$ contains an elementary abelian subgroup of order $8$. This is a contradiction to the fact that for odd $q$ there are at most $3$ involutions which commute pairwise; see also \cite[Lemma 2.2 (viii)]{KOS}.
\item Suppose that $H$ is the semidirect product $SmallGroup(720,764)\cong {\rm A}_6\rtimes C_2$.
Then $H$ contains a cyclic subgroup $C_{10}$ of order $10$; by Lemma \ref{classificazione}, $C_{10}$ is generated by an element of type (E), and the elements of order $5$ in $C_{10}$ are of type (C).
Also, $H$ contains a dihedral subgroup $D_5$ of order $10$. If an involution normalizes an elation, then they commute, as they are both contained in a maximal subgroup of type $M_2$ and the involution is in the center of $M_2$; see \cite{MZ2} for the structure of $M_2$. Hence, the elements of order $5$ in $D_5$ are of type (D). But $H$ has a unique conjugacy class of elements of order $5$, so that we cannot have elements of type (C) and elements of type (D).
\end{itemize}
Then $H\cong SmallGroup(720,765)$ and the claim is proved.
\end{proof}

\begin{proposition}\label{max720}
Let $q=p^n$ be an odd power of $p=5$, $H$ be a maximal subgroup of $\PSU(3,q)$ isomorphic to ${\rm SmallGroup(720,765)}$, and $M$ be a maximal subgroup of $\PGU(3,q)$ containing $H$. Then $M=\PSU(3,q)$.
\end{proposition}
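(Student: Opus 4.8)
The plan is to follow the proofs of Propositions~\ref{HessianInPSU}, \ref{maxPGLconica} and \ref{maxpsl27}. Since $H\le M\cap\PSU(3,q)\le\PSU(3,q)$ and $H$ is maximal in $\PSU(3,q)$, either $M\cap\PSU(3,q)=H$ or $M\cap\PSU(3,q)=\PSU(3,q)$; because $\PSU(3,q)$ is maximal in $\PGU(3,q)$, the assertion follows once the first alternative is excluded. So I assume for contradiction that $M\cap\PSU(3,q)=H$. Then Equation~\eqref{trevolte} applies verbatim and yields $|M|=3|H|=2160$, and $H$ is normal in $M$ since $\PSU(3,q)$ is normal in $\PGU(3,q)$. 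Thus $M$ is a degree-$3$ normal extension of $H\cong{\rm SmallGroup}(720,765)$.

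Next I would pin down $M$ up to isomorphism. Here the relevant facts about $H$ are that $Z(H)=1$ and that $\mathrm{Out}(H)$ is a $2$-group: indeed $H$ is almost simple with socle ${\rm A}_6=[H,H]$, which is characteristic in $H$, so $\mathrm{Aut}(H)$ embeds into $\mathrm{Aut}({\rm A}_6)\cong{\rm P\Gamma L}(2,9)$, and since $H$ has index $2$ in ${\rm P\Gamma L}(2,9)$ this embedding is surjective, giving $|\mathrm{Aut}(H)|=1440$ and $|\mathrm{Out}(H)|=2$. Put $K=C_M(H)$. Then $K\cap H=Z(H)=1$, so $|K|$ divides $[M:H]=3$; if $|K|=1$ the conjugation action embeds $M$ into $\mathrm{Aut}(H)$, which is impossible since $2160$ does not divide $1440$; hence $|K|=3$ and $M=HK=H\times C_3$, say $C_3=\langle\alpha\rangle$. (One may instead quote \cite[Remark~6.12]{GKeven}, as in the previous propositions, the only degree-$3$ normal extension of $H$ being the direct product.)

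Finally I would apply Lemma~\ref{classificazione} to the element $\alpha$ of order $3$. Since $p=5$, it must be of type (A), (B1) or (B3), so the fixed locus $\mathrm{Fix}(\alpha)$ in the plane is, respectively, the union of a line $\ell$ and a single point $P\notin\ell$ with $P\notin\cH_q$, a self-polar triangle with vertices off $\cH_q$, or a Frobenius-invariant triangle contained in $\cH_q(\mathbb{F}_{q^6})\setminus\cH_q(\mathbb{F}_{q^2})$. As $\langle\alpha\rangle$ is normal in $M$, each $m\in M$ conjugates $\alpha$ into $\{\alpha,\alpha^2\}$, which has the same fixed locus as $\alpha$; hence $M$ stabilizes $\mathrm{Fix}(\alpha)$, and therefore fixes the pole--polar pair $(P,\ell)$ in the first case and the triangle in the other two. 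Thus $M$ lies in a conjugate of $M_2$, $M_3$ or $M_4$ of Theorem~\ref{MaximalSubgroupsPGU}, so $H\le M_i\cap\PSU(3,q)$ fixes a point or a triangle of $\PG(2,q^6)$, contradicting Theorem~\ref{MaximalSubgroupsNotFixingPSU}. This contradiction gives $M\cap\PSU(3,q)=\PSU(3,q)$, that is $M=\PSU(3,q)$.

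I expect the only genuine difficulty to be the second step --- showing that a degree-$3$ normal overgroup of $H$ is forced to be the direct product $H\times C_3$, which rests on $H$ having no outer automorphism of order $3$. The first and third steps are straightforward adaptations of the arguments already used for the Hessian groups, for $\PGL(2,q)$ preserving a conic, and for $\PSL(2,7)$.
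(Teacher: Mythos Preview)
Your proof is correct and follows essentially the same approach as the paper's: both reduce to finding an element $\alpha$ of order $3$ centralizing (a large subgroup of) $H$, by exploiting that $\Aut({\rm A}_6)\cong{\rm P\Gamma L}(2,9)$ has order $1440<2160$, and then derive a contradiction from Lemma~\ref{classificazione} and Theorem~\ref{MaximalSubgroupsNotFixingPSU}. The only cosmetic difference is that the paper passes through the characteristic subgroup ${\rm A}_6\triangleleft M$ and bounds the kernel of the conjugation action on ${\rm A}_6$, whereas you compute $C_M(H)$ directly via $|\mathrm{Out}(H)|\le2$; both arguments rest on the same fact about $\Aut({\rm A}_6)$.
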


\begin{proof}
As in the proof of Proposition \ref{HessianInPSU}, the claim follows once that we discard the case $M\cap\PSU(3,q)=H$.
Assume by contradiction that $M\cap\PSU(3,q)=H$, which implies $|M|=3|H|=2160$.
Since $\PSU(3,q)$ is normal in $\PGU(3,q)$, also $H$ is normal in $M$. Also, ${\rm A}_6$ is characteristic in $H$ being the unique subgroup of $H$ of order $360$. Then ${\rm A_6}$ is normal in $M$.
Hence, $M/\Ker(\varphi)$ is isomorphic to an automorphism group of ${\rm A}_6$, where $\Ker(\varphi)$ is the kernel of the action by conjugation of $M$ on ${\rm A}_6$. As ${\rm A}_6\cong\PSL(2,9)$, $M/\Ker(\varphi)$ is isomorphic to a subgroup of ${\rm P\Gamma L}(2,9)$. Since the largest subgroup of ${P\Gamma L}(2,9)$ has order $720$ and the order of $M/\Ker(\varphi)$ is at least $|H|=720$, we have $|\Ker(\varphi)|=3$. Thus, $M\setminus {\rm A}_6$ contains an element $\alpha$ of order $3$ commuting with ${\rm A}_6$ elementwise.
As $3\mid(q+1)$, $\alpha$ is either of type (A), or of type (B1), or of type (B3), by Lemma \ref{classificazione}. Hence, $H$ is contained in one of the maximal subgroup $M_2,M_3,M_4$ described in Theorem \ref{MaximalSubgroupsPGU}, a contradiction to Theorem \ref{MaximalSubgroupsNotFixingPSU}.
\end{proof}

\subsection{The alternating group ${\rm A}_6$ when either $p=3$ and $n$ is even, or $5$ is a square in $\mathbb{F}_q$ but $\mathbb{F}_q$ contains no roots of unity}

\begin{proposition} \label{maxa6}
Let $q=p^n$ be an even power of $p=3$ or a power of a prime $p$ such that $5$ is a square in $\mathbb{F}_q$ but $\mathbb{F}_q$ contains no cube roots of unity. Let $H$ be a maximal subgroup of $\PSU(3,q)$ isomorphic to ${\rm A}_6$ and $M$ be a maximal subgroup of $\PGU(3,q)$ containing $H$. Then $M={\rm A}_6$ if $p=3$ and $M=\PSU(3,q)$ otherwise.
\end{proposition}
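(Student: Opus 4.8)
The strategy follows the template of the previous propositions in this section, exploiting the fact that $\PSU(3,q)$ is maximal and normal of index $3$ in $\PGU(3,q)$ and that $H$ is maximal in $\PSU(3,q)$. Since $H\leq M\cap\PSU(3,q)\leq\PSU(3,q)$, we have either $M\cap\PSU(3,q)=H$ or $M\cap\PSU(3,q)=\PSU(3,q)$; in the latter case $M=\PSU(3,q)$ by maximality of $\PSU(3,q)$. So the whole argument reduces to analysing the case $M\cap\PSU(3,q)=H$, and in that case one must show that $M\cong{\rm A}_6$ occurs precisely when $p=3$, and is impossible otherwise. As in the proof of Proposition \ref{HessianInPSU}, when $M\cap\PSU(3,q)=H$ the identity \eqref{trevolte} gives $|M|=3|H|=1080$, and since $\PSU(3,q)$ is normal in $\PGU(3,q)$, $H$ is normal in $M$; thus $M$ is a degree-$3$ normal extension of ${\rm A}_6$.

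The next step is to classify such extensions group-theoretically: by direct checking (e.g.\ with MAGMA \cite{MAGMA}, or via the known structure of $\aut({\rm A}_6)$) the groups of order $1080$ containing ${\rm A}_6$ as a normal subgroup are exactly ${\rm A}_6\times C_3$ and the two groups ${\rm PGL}(2,9)\times C_3$? — no; more precisely, since $M/C_M({\rm A}_6)$ embeds in $\aut({\rm A}_6)\cong{\rm P\Gamma L}(2,9)$ and $[M:{\rm A}_6]=3$, either $C_M({\rm A}_6)$ has order $3$ (so $M={\rm A}_6\times C_3$, up to the possible appearance of $M_{10}$, ${\rm S}_6$, ${\rm P\Gamma L}(2,9)$-type extensions which have the wrong order) or $M$ is a non-split-type extension with trivial centre realizing a $C_3$ of outer automorphisms. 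The case with a central $C_3$ is excluded exactly as in Proposition \ref{max720}: a generator $\alpha$ of that central $C_3$ has order $3$ and, since $3\mid(q+1)$, is of type (A), (B1) or (B3) by Lemma \ref{classificazione}, so its fixed locus (a point off $\cH_q$, or a self-polar triangle, or a Frobenius-invariant triangle) is $H$-invariant, forcing $H$ into one of $M_2,M_3,M_4$, contrary to Theorem \ref{MaximalSubgroupsNotFixingPSU}. This leaves the extension of ${\rm A}_6$ by an outer automorphism of order $3$, namely $M\cong{\rm P\Gamma L}(2,9)$ restricted to its order-$1080$ subgroup containing ${\rm A}_6$ — the group usually denoted ${\rm A}_6.3=\PSL(2,9).3$ coming from the field automorphism; equivalently $M\cong{\rm Aut}(\PSL(2,9))\cap(\text{order }1080)$.

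It then remains to decide for which $p$ this outer $C_3$-extension embeds in $\PGU(3,q)$ (outside $\PSU(3,q)$) and is maximal. Here the hypothesis splits: when $p=3$ the group ${\rm A}_6.3$ is precisely $\PSU(3,3).3\cap\ldots$? — rather, one uses that ${\rm A}_6\cong\PSU(3,2)'$? The cleaner route: when $p=3$ and $n$ is even, ${\rm A}_6\cong\PSL(2,9)$ sits in $\PGU(3,3)\cong\ldots$; in fact the order-$3$ outer automorphism of ${\rm A}_6$ is realized inside $\PGU(3,q)$ by a suitable element normalizing $H$, and the resulting $M\cong{\rm A}_6.3$ of order $1080$ is not contained in $\PSU(3,q)$, and its maximality in $\PGU(3,q)$ follows because any proper overgroup would meet $\PSU(3,q)$ in a subgroup strictly between $H$ and $\PSU(3,q)$, impossible by maximality of $H$. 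When $p\neq3$, one shows this outer extension cannot live in $\PGU(3,q)$: an element inducing the exceptional triality-type automorphism of ${\rm A}_6$ has order $3$ and would again be of type (A), (B1) or (B3), but a careful look (as in Case (1)–(3) of Proposition \ref{maxPGLconica}) at how ${\rm A}_6$ sits in the plane — it has no fixed point, no fixed line, no invariant triangle, and contains elements of order $5$ which constrain the eigenvalue structure — rules out each possibility, whence $M\cap\PSU(3,q)=\PSU(3,q)$ and $M=\PSU(3,q)$.

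The main obstacle I expect is the very last step: excluding, when $p\neq 3$, that the exceptional order-$3$ outer automorphism of ${\rm A}_6$ is induced by a plane collineation of type (A), (B1) or (B3) normalizing $H$. Unlike the previous propositions, here the outer automorphism is genuinely exceptional (it does not come from a field automorphism), so one cannot simply quote \cite[Remark 6.12]{GKeven}; instead one must combine the orbit structure of ${\rm A}_6$ on $\cH_q(\mathbb{F}_{q^2})$ (in particular the absence of short orbits compatible with a fixed point, line, or triangle of a normalizing homology) with the congruence conditions on $q$ (no cube roots of unity) to force a contradiction, exactly paralleling Cases (1)–(3) of the proof of Proposition \ref{maxPGLconica}.
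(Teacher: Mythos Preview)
Your setup is correct up to the point where you analyse the degree-$3$ extension of ${\rm A}_6$, but then you make two genuine errors.

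First, the case $p=3$ is trivial and you have overcomplicated it. When $q$ is a power of $3$ we have $q+1\equiv 1\pmod 3$, so $\gcd(3,q+1)=1$ and $\PGU(3,q)=\PSU(3,q)$. Hence a maximal subgroup $M$ of $\PGU(3,q)$ containing the maximal subgroup $H\cong{\rm A}_6$ of $\PSU(3,q)$ is $H$ itself. There is no order-$1080$ overgroup to construct; the conclusion $M={\rm A}_6$ is immediate. This is exactly what the paper does in one line.

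Second, and more seriously, for $p\ne 3$ your ``remaining case'' does not exist. You assert that after excluding $M\cong{\rm A}_6\times C_3$ one is left with an extension of ${\rm A}_6$ by a $C_3$ of outer automorphisms, and you identify this as your main obstacle. But $\mathrm{Out}({\rm A}_6)\cong C_2\times C_2$ has order $4$; there is \emph{no} outer automorphism of ${\rm A}_6$ of order $3$. Concretely, $|M|=1080=2^3\cdot3^3\cdot5$ while $|{\rm P\Gamma L}(2,9)|=1440=2^5\cdot3^2\cdot5$; since $H\cong{\rm A}_6$ embeds in $M/\Ker(\varphi)$ and $|M/\Ker(\varphi)|$ divides $\gcd(1080,1440)=360$, we get $|M/\Ker(\varphi)|=360$ and $|\Ker(\varphi)|=3$. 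So the centraliser $C_M({\rm A}_6)$ always has order $3$, the element $\alpha$ of order $3$ commuting with $H$ always exists, and the contradiction via Lemma~\ref{classificazione} and Theorem~\ref{MaximalSubgroupsNotFixingPSU} finishes the proof with no further cases. The ``exceptional order-$3$ outer automorphism'' you try to exclude is a phantom, and the analysis you sketch for it (paralleling Cases (1)--(3) of Proposition~\ref{maxPGLconica}) is unnecessary.
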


\begin{proof}
The claim is trivial for $p=3$ by Theorem \ref{MaximalSubgroupsNotFixingPSU}, as $\PGU(3,q)$ and $\PSU(3,q)$ coincide. If $p\ne3$ and $\mathbb{F}_q$ contains no cube roots of unity, then $3\mid(q+1)$ and $[\PGU(3,q):\PSU(3,q)]=3$. To prove the claim it is enough to discard the case $M\cap\PSU(3,q)=H$.
Suppose by contradiction that $M\cap\PSU(3,q)=H$, so that $|M|=3|H|$. Arguing as in the proof of Proposition \ref{max720} it can be shown that $M/\Ker(\varphi)$ is isomorphic to a subgroup of ${\rm P\Gamma L}(2,9)$, where $\varphi$ is the action by conjugation of $M$ on $H$; $H$ commutes elementwise with an element of order $3$ in $M\setminus H$; $H$ is contained either in $M_2$ or in $M_3$ or in $_4$ as described in Theorem \ref{MaximalSubgroupsPGU}, a contradiction to Theorem \ref{MaximalSubgroupsNotFixingPSU}.
\end{proof}

\subsection{The alternating group ${\rm A}_7$ when $p=5$ and $n$ is odd}

\begin{proposition}
Let $q$ be an odd power of $p=5$, $H$ be a maximal subgroup of $\PSU(3,q)$ isomorphic to ${\rm A}_7$ and $M$ be a maximal subgroup of $\PGU(3,q)$ containing $H$. Then $M=\PSU(3,q)$.
\end{proposition}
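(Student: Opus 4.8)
The plan is to argue exactly as in Proposition \ref{max720}. Since $q=5^n$ with $n$ odd we have $q\equiv2\pmod3$, hence $3\mid(q+1)$ and $\PSU(3,q)$ is a normal subgroup of index $3$ in $\PGU(3,q)$. From $H\le M\cap\PSU(3,q)\le\PSU(3,q)$ and the maximality of $H$ in $\PSU(3,q)$ we get $M\cap\PSU(3,q)\in\{H,\PSU(3,q)\}$; if $M\cap\PSU(3,q)=\PSU(3,q)$ then $\PSU(3,q)\le M$ and the maximality of $\PSU(3,q)$ in $\PGU(3,q)$ gives $M=\PSU(3,q)$, which is the assertion. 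So it suffices to rule out $M\cap\PSU(3,q)=H$.

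Assume $M\cap\PSU(3,q)=H$. As in Proposition \ref{HessianInPSU}, Equation \eqref{trevolte} holds and $|M|=3|H|=3\cdot2520=7560$. Since $\PSU(3,q)\trianglelefteq\PGU(3,q)$, $H$ is normal in $M$, so conjugation defines $\varphi\colon M\to\aut(H)$ with $\aut(H)\cong\aut({\rm A}_7)\cong{\rm S}_7$ of order $5040$; its kernel $\Ker(\varphi)$ centralizes $H$ elementwise, hence meets $H$ in $Z({\rm A}_7)=1$. The image $\varphi(M)$ contains $\varphi(H)\cong{\rm Inn}({\rm A}_7)\cong{\rm A}_7$, so $|\varphi(M)|\in\{2520,5040\}$; as $5040\nmid7560$, necessarily $|\varphi(M)|=2520$ and $|\Ker(\varphi)|=3$. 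Hence $M$ contains an element $\alpha$ of order $3$ lying in $M\setminus H$ that centralizes $H$ elementwise.

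Since $3\mid(q+1)$, Lemma \ref{classificazione} forces $\alpha$ to be of type (A), (B1) or (B3), so its fixed locus in $\PG(2,q^6)$ is, respectively, a pole--polar pair $(P,\ell)$ with $P\in\PG(2,q^2)\setminus\cH_q$, a self-polar triangle with vertices off $\cH_q$, or a Frobenius-invariant triangle contained in $\cH_q(\mathbb{F}_{q^6})\setminus\cH_q(\mathbb{F}_{q^2})$. Because $H$ centralizes $\alpha$, it stabilizes this configuration, so $H$ is contained, up to conjugation, in one of the maximal subgroups $M_2$, $M_3$, $M_4$ of Theorem \ref{MaximalSubgroupsPGU}; in particular $H=H\cap\PSU(3,q)$ fixes a point or a triangle of $\PG(2,q^6)$, contradicting Theorem \ref{MaximalSubgroupsNotFixingPSU}(vi). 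Thus $M\cap\PSU(3,q)=H$ is impossible and $M=\PSU(3,q)$. The only delicate point — the main obstacle — is confirming that the degree-$3$ normal extension $M$ of $H\cong{\rm A}_7$ really contains an order-$3$ element outside $H$ centralizing $H$; this is precisely where $|\aut({\rm A}_7)|=5040$ together with $5040\nmid 7560$ is used, excluding both an ${\rm S}_7$-type extension and an injective $\varphi$, after which the argument runs as in the preceding propositions.
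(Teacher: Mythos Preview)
Your proof is correct and follows essentially the same approach as the paper's own argument: reduce to excluding $M\cap\PSU(3,q)=H$, use the conjugation map $\varphi:M\to\aut({\rm A}_7)$ and the bound $|\aut({\rm A}_7)|=2|{\rm A}_7|$ to produce a central element $\alpha$ of order $3$ in $M\setminus H$, and then derive a contradiction from Lemma~\ref{classificazione} and Theorem~\ref{MaximalSubgroupsNotFixingPSU}. Your justification that $|\Ker(\varphi)|=3$ via $5040\nmid7560$ is slightly more explicit than the paper's, but the two arguments are otherwise identical.
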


\begin{proof}
Since $3\mid(q+1)$, $\PSU(3,q)$ is normal with index $3$ in $\PGU(3,q)$, and it is enough to prove that the case $M\cap\PSU(3,q)=H$ does not occur.
If $M\cap\PSU(3,q)=H$, then as above $|M|=3|H|$ and $H$ is normal in $M$. Thus, $M\Ker(\varphi)$ is isomorphic to an automorphism group of $H$, where $\varphi$ is the action by conjugation of $M$ on $H$. Since $|\aut({\rm A}_7)|=2|{\rm A}_7|$ and $H\leq M$, we conclude that $Ker(\varphi)$ is cyclic of order $3$. Hence, $H$ commutes with an element $\alpha$ of order $3$, and $H$ acts on the points fixed by $\alpha$. This is a contradiction to Theorem\ref{MaximalSubgroupsNotFixingPSU}, according to which $H$ cannot fix a point nor a triangle.
\end{proof}

\subsection{$\PSU(3,p^m)$ where $m\mid n$ and $n/m$ is odd}

In this section $q=p^n$ can be even or odd.
Let $H$ be a subgroup of $\PSU(3,q)$ isomorphic to $\PSU(3,\bar q)$, where $\bar q=p^m$, $m$ divides $n$, and $n/m$ is odd.
We start with some lemmas on maximal subgroups of $\PSU(3,q)$ containing $H$ which we were not able to find clearly stated in \cite{M,H} nor elsewhere.
%We also assume that $3\mid(q+1)$, which is equivalent to $3\mid(\bar q+1)$; in this way, $\PSU(3,\bar q)\ne\PGU(3,\bar q)$ and $\PSU(3,q)\ne\PGU(3,q)$.
\begin{lemma}\label{lemma1sottoPSU}
Let $q=p^n$ be a prime power and $H$ be a subgroup of $\PSU(3,q)$ isomorphic to $\PSU(3,p^m)$, where $m\mid n$ and $m/n$ is odd.
Then $\PSU(3,q)$ contains a unique conjugacy class of subgroups isomorphic to $H$. Also, $H$ is not maximal in $\PSU(3,q)$ unless $n/m$ is prime.
\end{lemma}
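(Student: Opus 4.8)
The plan is to treat the two assertions by independent arguments. The non-maximality statement is the easier one: suppose $n/m$ is composite, say $n/m=de$ with $d,e>1$. Since $n/m$ is odd, both $d$ and $e$ are odd, and hence so are the quotients $(md)/m=d$ and $n/(md)=e$. Therefore the usual chain of \emph{unitary} subfield embeddings applies and produces proper inclusions
\[
H\cong\PSU(3,p^m)\ \leq\ \PSU(3,p^{md})\ \leq\ \PSU(3,p^n)=\PSU(3,q);
\]
the oddness of $d$ and $e$ is exactly what guarantees that these are genuine unitary (rather than $\PSL$-type) subfield subgroups, because it forces the Frobenius $x\mapsto x^{p^{md}}$ to restrict to the field involution $x\mapsto x^{p^m}$ on $\mathbb{F}_{p^{2m}}$. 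Since $|\PSU(3,p^k)|$ is strictly increasing in $k$ and $m<md<n$, the group $H$ is properly contained in the proper subgroup $\PSU(3,p^{md})$ of $\PSU(3,q)$, so $H$ is not maximal.

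For the uniqueness of the conjugacy class I would first establish that any two subgroups $H_1,H_2$ of $\PGU(3,q)$ isomorphic to $\PSU(3,\bar q)$, with $\bar q=p^m$, are conjugate in $\PGU(3,q)$. The key point is that $H_i$ acts irreducibly on $\mathbb{F}_{q^2}^3$: a short case analysis using the structures of the stabilisers of a point, a line, and a (possibly imaginary) triangle recalled in Section \ref{sec:preliminari} — invoking Dickson's description of the subgroups of $\PSL(2,q)$ for the pole stabiliser $M_2$, and a comparison of group orders in the small case $\bar q=2$ — shows that $H_i$ fixes none of these configurations. Moreover $H_i$ acts \emph{absolutely} irreducibly, since by Steinberg's tensor product theorem the only faithful $3$-dimensional irreducible modules of $\SU(3,\bar q)$ over $\overline{\mathbb{F}}_p$ are the natural module, its dual, and their Frobenius twists, which are absolutely irreducible and permuted by $\aut(\SU(3,\bar q))$. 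Consequently $H_i$ preserves a unique non-degenerate Hermitian form up to scalars (Schur's lemma), namely a scalar multiple of the form $h$ defining $\PGU(3,q)$, and, up to an automorphism of $H_i$, the subgroup $H_i\leq\PGL(3,q^2)$ is conjugate to the one induced by the subfield inclusion $\SU(3,\bar q)\leq\SU(3,q)\leq\SL(3,q^2)$. Hence $H_1$ and $H_2$ are conjugate by some $g\in\PGL(3,q^2)$; since both preserve $h$ up to scalars, $g$ is a similitude of $h$, and after rescaling $g$ (using surjectivity of the norm $\mathbb{F}_{q^2}^*\to\mathbb{F}_q^*$) we may take $g\in\GU(3,q)$. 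Geometrically this is the assertion that $H_i$ stabilises a unique sub-Hermitian curve $\cH_{\bar q}\subseteq\cH_q$ — the set of centres of the elations of type {\rm(C)} contained in $H_i$ — which spans a subplane isomorphic to $\PG(2,\bar q^2)$, and that $\PGU(3,q)$ is transitive on such configurations.

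It then remains to pass from $\PGU(3,q)$-conjugacy to $\PSU(3,q)$-conjugacy. If $3\nmid(q+1)$ then $\PGU(3,q)=\PSU(3,q)$ and there is nothing more to do. If $3\mid(q+1)$ then, since $n/m$ is odd, $q\equiv\bar q\pmod 3$ and hence $3\mid(\bar q+1)$ as well, so $H\cong\PSU(3,\bar q)$ is normal of index $3$ in the subfield copy $K$ of $\PGU(3,\bar q)$ inside $\PGU(3,q)$; moreover, by absolute irreducibility $C_{\PGU(3,q)}(H)=1$, and since conjugation in $\PGL(3,q^2)$ can induce on $H$ only inner and diagonal automorphisms (no Frobenius twist of the natural module is isomorphic to it), one obtains $N_{\PGU(3,q)}(H)=K$. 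A determinant computation in $\mu_{q+1}$ then decides whether $K\leq\PSU(3,q)$: if $K\not\leq\PSU(3,q)$, then $N_{\PGU(3,q)}(H)\,\PSU(3,q)$ properly contains $\PSU(3,q)$ and therefore equals $\PGU(3,q)$ (the index being the prime $3$), so the coset of all elements of $\PGU(3,q)$ conjugating $H_1$ to $H_2$ meets $\PSU(3,q)$, which yields a conjugating element inside $\PSU(3,q)$.

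I expect this last step to be the main obstacle: the entire reduction rests on knowing precisely when the subfield copy of $\PGU(3,\bar q)$ fails to be contained in $\PSU(3,q)$, and this is a question about the $3$-adic valuations of $\bar q+1$ and of $n/m$ — the only place where the hypothesis that $n/m$ be odd (rather than merely prime to $6$, say) enters in an essential way. The remaining ingredients — the absence of fixed points, lines, and triangles for $H$; absolute irreducibility together with the classification of the $3$-dimensional modules of $\SU(3,\bar q)$; and the transitivity of $\PGU(3,q)$ on the sub-Hermitian curves of $\cH_q$ of order $\bar q$ — are either routine or standard facts about $\PGU(3,q)$ and about subfield subgroups of finite classical groups, and can be quoted as needed.
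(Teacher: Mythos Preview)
Your non-maximality argument is correct and coincides with the paper's: factor $n/m=de$ with $d,e>1$ (both necessarily odd) and interpolate through $\PSU(3,p^{md})$.

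For the conjugacy assertion your route is genuinely different, and the gap you single out is real and not merely technical. The paper never passes through $\PGU(3,q)$: it first builds an explicit standard copy of $\PSU(3,\bar q)$ as the subgroup of elements of $\PSU(3,q)$ defined over $\mathbb{F}_{\bar q^{2}}$ (using the norm--trace model $x^{q+1}=y^{q}+y$ and checking directly that the $\mathbb{F}_{\bar q^{2}}$-rational elements of the point stabiliser $\cA(P_\infty)$ have order $\bar q^{3}(\bar q^{2}-1)$), and then moves an arbitrary $H\cong\PSU(3,\bar q)$ onto this copy \emph{inside} $\PSU(3,q)$ by exploiting the $2$-transitivity of $\PSU(3,q)$ on $\cH_q(\mathbb{F}_{q^2})$. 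Concretely, a Borel subgroup $S\rtimes C$ of $H$ fixes a unique point $P\in\cH_q(\mathbb{F}_{q^2})$ (since $\cH_q$ has $p$-rank zero), an involution $\tilde w\in H$ sends $P$ to a second such point $Q$, and $2$-transitivity is used to carry $(P,Q)$ to the standard pair; the paper then identifies $H$ with the standard copy.

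Your obstruction occurs precisely when $3\mid(q+1)$ and $3\mid(n/m)$. In that case every determinant in $\mu_{\bar q+1}\subset\mathbb{F}_{\bar q^{2}}^{*}$ becomes a cube in $\mathbb{F}_{q^{2}}^{*}$, because $3$ divides the extension degree $[\mathbb{F}_{q^2}:\mathbb{F}_{\bar q^2}]=n/m$; hence the subfield copy $K\cong\PGU(3,\bar q)$ \emph{does} lie inside $\PSU(3,q)$ --- this is exactly the phenomenon the paper itself establishes and uses in Lemma~\ref{lemma2sottoPSU}. Combined with your (correct) computation $N_{\PGU(3,q)}(H)=K$, one gets $N_{\PGU(3,q)}(H)\cdot\PSU(3,q)=\PSU(3,q)$, so the coset of conjugating elements need not meet $\PSU(3,q)$ and your reduction from $\PGU$- to $\PSU$-conjugacy breaks down. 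The paper's direct geometric argument, working entirely within $\PSU(3,q)$ via $2$-transitivity, is what circumvents this; if you wish to salvage your approach you would need an independent argument in this residual case, and the module-theoretic machinery you set up does not obviously supply one.
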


\begin{proof}
Consider the Hermitian curve $\cH_q$ in its Norm-Trace equation $x^{q+1}=y^q+y$, let $P_{\infty}\in\cH_q$ be the unique point at infinity of $\cH_q$, and $\cA(P_{\infty})$ be the stabilizer of $P_{\infty}$ in $\PGU(3,q)$; we follow here the notation in \cite{GSX}. From \cite{GSX}, $\cA(P_{\infty})$ has order $q^3(q^2-1)$ and is a semidirect product $\mathcal{A}(P_\infty)=\mathcal{A}_1(P_\infty) \rtimes \mathcal{C}(P_\infty)$ where $\mathcal{A}_1(P_\infty)$ is the unique Sylow $p$-subgroup of $\mathcal{A}(P_\infty)$ and $ \mathcal{C}(P_\infty)$ is cyclic of order $q^2-1$.
If $\sigma \in \mathcal{A}_1(P_\infty)$, then $\sigma(x)=x+b$ and $\sigma(y)=y+b^qx+c$, for some $b,c \in \mathbb{F}_{q^2}$ satisfying $c^q+c=b^{q+1}$; if $\sigma \in \mathcal{C}(P_\infty)$, then $\sigma(x)=ax$ and $\sigma(y)=a^{q+1}y$ with $a \in \mathbb{F}_{q^2}$. Also, $\PGU(3,q)=\langle \mathcal{A}(P_\infty),w \rangle$, where $w$ is the involution defined as $w(x)=\frac{x}{y}$, $w(y)=\frac{1}{y}$.
The subgroup $\PSU(3,q)$ is generated by $\PSU(3,q)=\langle\PSU(3,q)\cap\cA(P_\infty),w\rangle$, where $\PSU(3,q)\cap\cA(P_\infty)=\mathcal{A}_1(P_\infty) \rtimes (\PSU(3,q)\cap\mathcal{C}(P_\infty))$ and $|\PSU(3,q)\cap\mathcal{C}(P_\infty)|=(q^2-1)/\gcd(3,q+1)$.
Let
$$K_1=\{\sigma \in \mathcal{A}_1(P_\infty) \mid \sigma {\rm \ is \ defined \ over} \ \mathbb{F}_{p^{2m}}\}, \quad K_2=\{\sigma \in \mathcal{C}(P_\infty) \cap \PSU(3,q)  \mid \sigma {\rm \ is \ defined \ over} \ \mathbb{F}_{p^{2m}}\},$$
and $\cA_m(P_\infty)=\langle K_1,K_2\rangle$.
Clarly, $|K_2|=p^{2m}-1$. To show that $|K_1|=p^{3m}$, let $b,c\in\mathbb{F}_{p^{2m}}$. By direct checking, the condition $c^q+c=b^{q+1}$ holds if and only if $c^{p^{m(r-2)}}+c=b^{p^{m(r-2)}+1}$, where $r=n/m$. If $r=3$, the condition reads $c^{p^m}+c=b^{p^{m(r-2)}+1}$ and has $p^{3m}$ solutions $(b,c)\in\mathbb{F}_{p^{2m}}$ by the properties of the norm and trace functions. If $r\geq5$, then by writing $m(r-2)=2m+m(r-4)$ we have that $c^{q}+c=b^{q+1}$ is equivalent to $c^{p^{m(r-4)}}+c=b^{p^{m(r-4)}+1}$; by induction, we obtain $|K_1|=p^{3m}$.
Since $K_2$ normalizes $K_1$, we have $\mathcal{A}_m(P_\infty)=K_1 \rtimes K_2$ and $|\mathcal{A}_m(P_\infty)|=p^{3m}(p^{2m}-1)$. 
Clearly, the involution $w$ is defined over $\mathbb{F}_{p^{2m}}$.
Hence, the group $\langle\cA_m(P_\infty),w\rangle$ is defined over $\mathbb{F}_{p^{2m}}$ and is isomorphic to $\PSU(3,p^m)$, since it preserves the Hermitian curve $x^{p^m+1}=y^{p^m}+y$.

If $H\leq\PSU(3,q)$ is isomorphic to $\PSU(3,p^m)$, write $H\langle S\rtimes C,\tilde{w}\rangle$, where $|S|=p^{3m}$, $C$ is cyclic of order $p^{2m}-1$, and $\tilde w$ is an involution.
The group $S\rtimes C$ fixes a point $P\in\cH_q(\mathbb{F}_{q^2})$ as $\cH_q$ has $p$-rank zero (see \cite[Theorem 11.133]{HKT}), and $\tilde w$ maps $P$ to another point $Q \in \cH_q(\mathbb{F}_{q^2})$. Since $\PSU(3,q)$ is doubly transitive on $\cH_q(\mathbb{F}_{q^2})$ and transitive on ${\rm PG}/2,q^2) \setminus \cH_q$, we have up to conjugation in $\PSU(3,q)$ that $P=P_\infty$, $Q$ is the other fixed point of $w$, and $\tilde w=w$, so that $H=\langle\cA_m(P_\infty),w\rangle$.
Hence, subgroups isomorphic to $\PSU(3,p^m)$ are conjugated in $\PSU(3,q)$ to the subgroup $\langle\cA_m(P_\infty),w\rangle$ described above.
This implies that $\PSU(3,\bar q)$ is not maximal in $\PSU(3,q)$ unless $n/m$ is prime; in fact, if $n/m=d_1 d_2$ with $d_1,d_2>1$, then there exists a proper subgroup $\PSU(3,p^{md_1})$ of $\PSU(3,q)$ which contains $\PSU(3,\bar q)$ properly.
\end{proof}
%This means that while dealing subgroups $H \cong \PSU(3,p^m)$ of $\PSU(3,q)$ we can imagine this group as the group of elements $\sigma \in \PSU(3,q)$ such that $\sigma$ is defined over $\mathbb{F}_{p^{2m}}$, as in the proof of the previous result. This fact will be useful to compute the genus of the quotient curve $\cH_q/H$.

\begin{lemma}\label{lemma2sottoPSU}
Let $q=p^n$ be a prime power and $H$ be a subgroup of $\PSU(3,q)$ isomorphic to $\PSU(3,p^m)$, where $m \mid n$ and $n /m$ is an odd prime.
\begin{itemize}
\item If $n/m \ne 3$ or $3 \nmid (q+1)$, then $H$ is maximal in $\PSU(3,q)$. 
\item If $n/m=3$ and $3 \mid (q+1)$, then a maximal subgroup $M$ of $\PSU(3,q)$ containing $H$ is isomorphic to $\PGU(3,p^m)$.
\end{itemize}
\end{lemma}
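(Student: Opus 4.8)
\emph{Sketch of the proof.} The plan is to take a maximal subgroup $M$ of $\PSU(3,q)$ with $H\leq M\neq\PSU(3,q)$ and identify its isomorphism type. The starting point is that $H\cong\PSU(3,p^m)$ fixes no point, no line and no triangle of $\PG(2,\bar{\mathbb{F}}_{q^2})$: it acts irreducibly on its natural three-dimensional module, so it fixes no point and no line, and it fixes no triangle either, since for $p^m\geq 3$ the pointwise stabiliser of a triangle would be a subgroup of index at most $6$ in the simple group $\PSU(3,p^m)$, while for $p^m=2$ one checks directly on $\PSU(3,2)$ (alternatively, by Lemma \ref{lemma1sottoPSU} and the occurrence of the Hessian group of order $72\cong\PSU(3,2)$ in Theorem \ref{MaximalSubgroupsNotFixingPSU}). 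Hence $M$ fixes no point and no triangle, so $M$ is one of the groups listed in Theorem \ref{MaximalSubgroupsNotFixingPSU}.

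Next I would discard every candidate for $M$ except the subfield ones. Since $|H|=|\PSU(3,p^m)|=p^{3m}(p^{2m}-1)(p^{3m}+1)/\gcd(3,p^m+1)$ exceeds $2520$ whenever $p^m\geq 3$, and the Hessian groups, $\PSL(2,7)$, ${\rm A}_6$, the group of order $720$, ${\rm A}_7$ and the group of order $36$ all have order at most $2520$, none of these can contain $H$ when $p^m\geq 3$; for $p^m=2$ the group of order $36$ is too small, and the Hessian groups of order $72$ and $216$ are $\PSU(3,2)=\PSU(3,p^m)$ and $\PGU(3,2)=\PGU(3,p^m)$. Finally, $M\cong\PGL(2,q)$ (a type which occurs only for $p$ odd, hence $p^m\geq 3$) is impossible: $\PSU(3,p^m)$ is then a nonabelian simple group of order at least $6048$, while by Dickson's list the only nonabelian simple subgroups of $\PGL(2,q)$ are ${\rm A}_5$ and the groups $\PSL(2,q')$, and none of these is isomorphic to $\PSU(3,p^m)$ — by a comparison of orders, since the only $\PSL(2,q')$ with the same $p$-part as $\PSU(3,p^m)$ is $\PSL(2,p^{3m})$, which is strictly larger. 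Therefore $M$ is of type (vii)/(vii$^\prime$) or (viii)/(viii$^\prime$) of Theorem \ref{MaximalSubgroupsNotFixingPSU}: either $M\cong\PSU(3,p^{m'})$ for some $m'\mid n$, or $M$ contains $\PSU(3,p^{m'})$ as a normal subgroup of index $3$ with $n/m'=3$ and $3\mid(q+1)$, in which case $M\cong\PGU(3,p^{m'})$.

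In either case $H$ is contained in a copy of $\PSU(3,p^{m'})$ lying in $\PSU(3,q)$: in the index-$3$ case one uses that $\PSU(3,p^m)\cong H$ has no subgroup of index $3$, so $H$ lies in the normal subgroup $\PSU(3,p^{m'})$ of $M$. Now $\PSU(3,p^m)\leq\PSU(3,p^{m'})$ forces $m\mid m'$ with $m'/m$ odd — this is read off from the field of definition of the natural three-dimensional module — and, $n/m$ being prime, the factorisation $n/m=(n/m')(m'/m)$ forces either $m'=n$, whence $M\supseteq\PSU(3,q)$ against the properness of $M$, or $m'=m$. Hence $M\cong\PSU(3,p^m)$, in which case $M=H$ by the uniqueness of the conjugacy class established in Lemma \ref{lemma1sottoPSU}, or $M\cong\PGU(3,p^m)$ with $n/m=3$ and $3\mid(q+1)$. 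If $n/m=3$ and $3\mid(q+1)$, then $3\mid(p^m+1)$, so $\PSU(3,p^m)$ is a proper subgroup of $\PGU(3,p^m)$, which by Theorem \ref{MaximalSubgroupsNotFixingPSU}(viii) sits in $\PSU(3,q)$ as a maximal subgroup properly containing $H$; thus $H$ is not maximal and $M\cong\PGU(3,p^m)$, proving the second assertion. If instead $n/m\neq 3$ or $3\nmid(q+1)$, the option $M\cong\PGU(3,p^m)\neq\PSU(3,p^m)$ cannot arise — a proper overgroup of $\PSU(3,p^m)$ of the form $\PGU(3,p^m)$ inside $\PSU(3,q)$ exists only when $n/m=3$ and $3\mid(q+1)$, and when $3\nmid(q+1)$ one even has $\PGU(3,p^m)=\PSU(3,p^m)$ since $n/m$ is odd — so $M\cong\PSU(3,p^m)=H$ and $H$ is maximal.

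The step I expect to require the most care is the final one: deciding exactly when a proper subgroup of $\PSU(3,q)$ can sit strictly between $H\cong\PSU(3,p^m)$ and $\PSU(3,q)$. In addition to the representation-theoretic input that $\PSU(3,p^m)\leq\PSU(3,p^{m'})$ needs $m\mid m'$ with $m'/m$ odd, this rests on the arithmetic fact — implicit in Theorem \ref{MaximalSubgroupsNotFixingPSU}(viii), and directly verifiable — that the norm-one subgroup of order $p^m+1$ of $\mathbb{F}_{p^{2m}}^{*}$ consists of cubes inside the norm-one subgroup of order $p^n+1$ of $\mathbb{F}_{p^{2n}}^{*}$ exactly when $3\nmid(p^m+1)$ or $n/m=3$; equivalently, $\PGU(3,p^m)$ embeds in $\PSU(3,q)$ as a proper overgroup of $\PSU(3,p^m)$ precisely when $n/m=3$ and $3\mid(q+1)$. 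This dichotomy is what separates the two cases of the statement; everything else is bookkeeping with Theorem \ref{MaximalSubgroupsNotFixingPSU} and Lemma \ref{lemma1sottoPSU}.
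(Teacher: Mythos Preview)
Your overall strategy---pass to a maximal overgroup $M$, invoke Theorem~\ref{MaximalSubgroupsNotFixingPSU}, and eliminate the non-subfield candidates by order and by Dickson's list---is sound and in fact cleaner than the paper's somewhat terse first paragraph. The divisibility step ``$\PSU(3,p^m)\leq\PSU(3,p^{m'})\Rightarrow m\mid m'$ with $m'/m$ odd'' is correct, though ``read off from the field of definition'' would benefit from one more sentence (for instance, apply Theorem~\ref{MaximalSubgroupsNotFixingPSU} recursively inside $\PSU(3,p^{m'})$ together with Lemma~\ref{lemma1sottoPSU}).

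There is, however, a genuine gap in the second bullet. When $M$ is of type~(viii) you write ``in which case $M\cong\PGU(3,p^{m'})$''. Theorem~\ref{MaximalSubgroupsNotFixingPSU}(viii) as stated in the paper only says that $M$ contains $\PSU(3,p^{m'})$ as a normal subgroup of index $3$; it does \emph{not} identify the isomorphism type of $M$. A priori $M$ could be a non-split or split extension by a diagonal, field, or mixed outer automorphism of order $3$, and pinning this down to $\PGU(3,p^{m'})$ is precisely the content of the lemma's second assertion. Your argument establishes $m'=m$ and $|M|=3|H|$, but then assumes the conclusion.

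The paper closes this gap geometrically: it first constructs an explicit copy of $\PGU(3,p^m)$ inside $\PSU(3,q)$ (the $\mathbb{F}_{p^{2m}}$-rational elements of $\PGU(3,q)$, whose determinants are cubes in $\mathbb{F}_{q^2}$ since $\mathbb{F}_{q^2}/\mathbb{F}_{p^{2m}}$ is cubic), and then rules out any other $\tilde M\not\cong\PGU(3,p^m)$ by taking $\alpha\in\tilde M\setminus H$ of order $3$ and showing, via Lemma~\ref{classificazione}, that $\alpha$ of type (A) or (B1) is already defined over $\mathbb{F}_{p^{2m}}$ (hence lies in $\PGU(3,p^m)$), while type (B3) is excluded since $3\nmid(q^2-q+1)/3$. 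You could repair your argument along the same lines, or alternatively argue that the normalizer of $H$ in $\PGU(3,q)$ can realise only inner--diagonal automorphisms of $H$ (field automorphisms are not induced by elements of $\PGL(3,q^2)$), which again forces $M\cong\PGU(3,p^m)$; but some such step is needed.
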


\begin{proof}
As shown in the proof of Lemma \ref{lemma1sottoPSU}, we can assume up to conjugation that $H$ consists of the $\mathbb{F}_{p^{2m}}$-rational elements of $\PSU(3,q)$.
Let $M$ be a maximal subgroup of $\PSU(3,q)$ containing $H$.
Since $M$ does not leave invariant any point or triangle, we have by Theorem \ref{MaximalSubgroupsNotFixingPSU} that either $M\cong\PSU(3,p^k)$, where $k\mid n$ and $n/k$ is odd; or $M$ contains $\PSU(3,p^k)$ as a normal subgroup of index $3$, where $k\mid n$, $3\mid n/m$, and $3\mid(q+1)$.
Hence, if the odd prime $n/m$ is different from $3$ or $3\nmid(q+1)$, then the second case cannot occur, and $M\cong H$ follows.
Then we can suppose that $n/m=3$ and $3\mid(q+1)$; this implies also $3\mid(p^k+1)$, so that $\PSU(3,p^m)\ne\PGU(3,p^m)$.

Let $M$ be the subgroup of $\PGU(3,q)$ made by the elements of $\PGU(3,q)$ which are defined over $\mathbb{F}_{p^{2m}}$; note that $H\leq M$. By arguing as in the proof of Lemma \ref{lemma1sottoPSU}, we obtain that $M$ is isomorphic to $\PGU(3,p^m)$, and hence $\PSU(3,p^m)$ has index $3$ in $M$.
For any $\sigma\in M$, the determinant of $\sigma$ is an element of $\mathbb{F}_{p^{2m}}$ and hence is a cube in $\mathbb{F}_{q^2}$, because $\mathbb{F}_{q^2}$ is a cubic extension of $\mathbb{F}_{p^{2m}}$. Thus $M$ is a subgroup of $H$, and satisfies the statement of this lemma.

Let $\tilde M$ be a maximal subgroup of $\PSU(3,q)$ containing $H$ as a normal subgroup of index $3$; the claim follows if we prove that the case $\tilde M\not\cong\PGU(3,p^m)$ cannot occur.
If $\tilde M\not\cong\PGU(3,p^m)$, then from \cite[Remark 6.12]{GKeven} follows that $\tilde M$ is a semidirect product $H\rtimes C_3$. Let $C_3=\langle\alpha\rangle$; clearly $\alpha\notin M\cong\PGU(3,p^m)$, otherwise $\tilde M$ and $M$ coincide.
By Lemma \ref{classificazione}, $\alpha$ is either of type (A), or of type (B1), or of type (B3).
\begin{itemize}
\item Suppose that $\alpha$ is of type (A) or (B1). As in Case (2) in the proof of Proposition \ref{maxPGLconica}, we can use the Fermat model of $\cH_q$ and assume up to conjugation that $\alpha$ is a diagonal matrix of type $\diag(\lambda,\mu,1)$, where $\lambda^3=\mu^3=1$; here, $\lambda=\mu$ or $\lambda\ne\mu$ according to $\alpha$ being of type (A) or (B1), respectively. In any case, $\alpha$ is defined over $\mathbb{F}_{p^{2m}}$ as $3\mid(p^m+1)$, and hence $\alpha\in M\cong\PGU(3,p^m)$, a contradiction.
\item Suppose that $\alpha$ is of type (B3), and let $T$ be the $\mathbb{F}_{q^6}$-rational triangle fixed pointwise by $\alpha$. Then $\alpha$ is an element of the pointwise stabilizer of $T$ in $\PSU(3,q)$, which has order $(q^2-q+1)/3$. This is a contradiction to $o(\alpha)=3\nmid(q^2-q+1)/3$.
\end{itemize}
\end{proof}

\begin{proposition} \label{maxpgupsu}
Let $q=p^n$ be a prime power and $H$ be a subgroup of $\PSU(3,q)$ isomorphic to $\PSU(3,p^m)$, where $m \mid n$ and $n /m$ is an odd prime. Let $M$ be a maximal subgroup of $\PSU(3,q)$ containing $H$, so that
\begin{itemize}
\item if $n/m\ne3$ or $3\nmid(q+1)$, then $M\cong\PSU(3,p^m)$;
\item if $n/m=3$ and $3\mid(q+1)$, then $M\cong\PGU(3,p^m)$.
\end{itemize}
Let $\tilde M$ be a maximal subgroup of $\PGU(3,q)$ containing $M$.
%, so that
%\begin{itemize}
%\item if $n/m \ne 3$ or $3 \nmid (q+1)$ then $M \cong \PSU(3,p^m)$; while
%\item if $n/m=3$ and $3 \mid (q+1)$ then $M\cong\PGU(3,p^m)$.
%\end{itemize}
%Let $\tilde M$ be a maximal subgroup of $\PGU(3,q)$ containing $M$.
\begin{itemize}
\item If $n/m \ne 3$ or $3 \nmid (q+1)$, then $\tilde M \cong \PGU(3,p^m)$.
\item If $n/m=3$ and $3 \mid (q+1)$, then $\tilde M \cong \PSU(3,q)$.
\end{itemize}
\end{proposition}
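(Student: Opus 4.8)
The plan is to combine the normality of $\PSU(3,q)$ in $\PGU(3,q)$ with the explicit ``subfield'' model of $\PGU(3,p^m)$ already used in the proof of Lemma~\ref{lemma2sottoPSU}, reducing the whole statement to a single normalizer computation. First I would dispose of the case $3\nmid(q+1)$: then $\PGU(3,q)=\PSU(3,q)$, so $\tilde M=M$, and since $n/m$ is odd we have $p^m+1\mid p^n+1$, hence $3\nmid(p^m+1)$ and $\PSU(3,p^m)=\PGU(3,p^m)$; the first assertion follows and the second is vacuous. So assume henceforth $3\mid(q+1)$, which (see Section~\ref{sec:preliminari}) forces $p\equiv2\pmod 3$ and $n,m$ odd, whence $3\mid(p^m+1)$ and $[\PGU(3,p^m):\PSU(3,p^m)]=3$.

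Next I would set up the subfield model. Let $M^{\ast}\le\PGU(3,q)$ be the subgroup of all elements defined over $\mathbb{F}_{p^{2m}}$. Arguing exactly as in the proof of Lemma~\ref{lemma2sottoPSU} (in the Norm--Trace model, via the stabilizer $\cA(P_\infty)$) one gets $M^{\ast}\cong\PGU(3,p^m)$; moreover, up to conjugation in $\PSU(3,q)$, $H$ lies in $M^{\ast}$ and $M^{\ast}\cap\PSU(3,q)$ is the unique index-$3$ subgroup of $M^{\ast}$, namely a copy of $\PSU(3,p^m)$. Whether $M^{\ast}$ itself lies in $\PSU(3,q)$ is decided by a cube count: for $\sigma\in M^{\ast}$ one has $\det\sigma\in\mathbb{F}_{p^{2m}}^{\times}$, and $\sigma\in\PSU(3,q)$ precisely when $\det\sigma$ is a cube in the cyclic group $\mathbb{F}_{q^2}^{\times}$; since $[\mathbb{F}_{q^2}^{\times}:\mathbb{F}_{p^{2m}}^{\times}]=1+p^{2m}+\cdots+p^{2m(n/m-1)}\equiv n/m\pmod 3$, the subgroup $\mathbb{F}_{p^{2m}}^{\times}$ lies in the cubes of $\mathbb{F}_{q^2}^{\times}$ exactly when $3\mid n/m$. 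Hence, if $n/m\ne3$ then $M^{\ast}\not\le\PSU(3,q)$ and (by Lemma~\ref{lemma2sottoPSU}) $M=H=M^{\ast}\cap\PSU(3,q)$ is maximal in $\PSU(3,q)$, whereas if $n/m=3$ then $M^{\ast}\le\PSU(3,q)$ and $M^{\ast}$ is a conjugate of the maximal subgroup $M\cong\PGU(3,p^m)$ of $\PSU(3,q)$ furnished by Lemma~\ref{lemma2sottoPSU}, so one may take $M=M^{\ast}$.

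The heart of the argument is the normalizer identity
$$N_{\PGL(3,q^2)}\!\big(\PSU(3,p^m)\big)=N_{\PGL(3,q^2)}\!\big(\PGU(3,p^m)\big)=\PGU(3,p^m).$$
Since $\PSU(3,p^m)$ acts absolutely irreducibly on the underlying $3$-space, Schur's lemma gives $C_{\PGL(3,q^2)}(\PSU(3,p^m))=1$, so any $g$ normalizing $\PSU(3,p^m)$ induces a genuine automorphism of it; and this automorphism can induce only an inner--diagonal automorphism (i.e.\ one coming from conjugation by an element of $\PGU(3,p^m)$), because the natural module of $\PSU(3,p^m)$ has field of definition exactly $\mathbb{F}_{p^{2m}}$, so a nontrivial field automorphism cannot fix its isomorphism class. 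Hence $g$ lies in $\PGU(3,p^m)$ modulo scalars; the second equality then follows as $\PSU(3,p^m)$ is characteristic (being the derived subgroup) in $\PGU(3,p^m)$. Equivalently: no projective collineation of $\PG(2,q^2)$ induces a field automorphism on the subfield subgroup $\PGU(3,p^m)$. This is the step I expect to be the main obstacle; the rest is bookkeeping.

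Finally I would conclude. Let $\tilde M$ be maximal in $\PGU(3,q)$ with $M\le\tilde M$; since $M\le\tilde M\cap\PSU(3,q)\le\PSU(3,q)$ and $M$ is maximal in $\PSU(3,q)$, we get $\tilde M\cap\PSU(3,q)\in\{M,\PSU(3,q)\}$. If $\tilde M\cap\PSU(3,q)=\PSU(3,q)$, then $\tilde M\supseteq\PSU(3,q)$ and, $\PSU(3,q)$ being maximal in $\PGU(3,q)$ while $\tilde M$ is proper, $\tilde M=\PSU(3,q)$ (item~$(5)$ of Theorem~\ref{MaximalSubgroupsPGU}). If $\tilde M\cap\PSU(3,q)=M$, then $M\trianglelefteq\tilde M$ because $\PSU(3,q)\trianglelefteq\PGU(3,q)$, so $\tilde M\le N_{\PGU(3,q)}(M)$, which by the identity above is $\PGU(3,p^m)$ when $n/m\ne3$ and is $M$ itself when $n/m=3$. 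Consequently, if $n/m=3$ the second alternative would force $\tilde M=M\lneq\PSU(3,q)\lneq\PGU(3,q)$, contradicting the maximality of $\tilde M$; so only $\tilde M=\PSU(3,q)$ survives, proving that case. If $n/m\ne3$, the second alternative gives $\tilde M=\PGU(3,p^m)$, and this genuinely occurs: by the argument just given applied to $M^{\ast}$, the group $M^{\ast}\cong\PGU(3,p^m)$ is not contained in $\PSU(3,q)$, hence is maximal in $\PGU(3,q)$; the remaining possibility $\tilde M=\PSU(3,q)$ is the maximal subgroup already recorded in item~$(5)$. This completes the proof in both cases.
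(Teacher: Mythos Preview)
Your proof is correct and takes a genuinely different route from the paper's. The paper argues by contradiction: assuming $\tilde M\cap\PSU(3,q)=M$, it invokes \cite[Remark~6.12]{GKeven} to write $\tilde M$ as a semidirect product $M\rtimes C_3$ with $C_3=\langle\alpha\rangle$, and then rules out each possible geometric type of $\alpha$ (types (A), (B1), (B3) from Lemma~\ref{classificazione}) by explicit coordinate arguments --- showing, for instance, that an $\alpha$ of type (A) or (B1) is already defined over $\mathbb{F}_{p^{2m}}$, and that an $\alpha$ of type (B3) would have to lie in a certain Singer group or act as a field automorphism commuting with elements of type (B2), leading to a contradiction. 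Your approach replaces this case analysis with a single structural fact, the normalizer identity $N_{\PGL(3,q^2)}(\PSU(3,p^m))=\PGU(3,p^m)$, proved via absolute irreducibility, Schur's lemma, and the observation that no nontrivial Frobenius twist of the natural module is isomorphic to it.

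What each approach buys: the paper's argument stays entirely within the geometric toolkit (element types, fixed configurations) developed in Section~\ref{sec:preliminari}, so it requires no outside representation theory; your argument is shorter, more conceptual, and transparently explains \emph{why} no degree-$3$ extension exists (there is simply no room in the normalizer), at the cost of importing a module-theoretic fact whose justification you correctly flag as the crux. Your cube-count for deciding when $M^\ast\le\PSU(3,q)$ is a nice addition not made explicit in the paper. One small remark: your field-of-definition argument for the normalizer identity is essentially correct but slightly compressed --- the case $k=m$ (i.e.\ the twist by $x\mapsto x^{p^m}$, which is the contragredient) is not ruled out by eigenvalue comparison on the diagonal torus alone and needs the separate observation that the natural module of $\SU(3,p^m)$ is not self-dual; and for $p^m=2$ the group $\PSU(3,2)$ is not simple, so the automorphism-group step requires a direct check rather than the standard $\mathrm{P\Gamma U}$ description. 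Finally, your treatment of the case $n/m\ne3$ is arguably cleaner than the paper's: you explicitly record that both $\PGU(3,p^m)$ and $\PSU(3,q)$ occur as maximal overgroups of $M$, whereas the paper's wording (``the case $\tilde M\not\cong\hat M$ cannot occur'') is slightly at odds with its own earlier acknowledgment of the possibility $\tilde M=\PSU(3,q)$.
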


\begin{proof}
Assume without restriction that $\cH_q$ has Norm-Trace equation $x^{q+1}=y^q+y$ and, up to conjugation, the elements of $H$ are defined over $\mathbb{F}_{p^{2m}}$.
The claim on $M$ follows from Lemma \ref{lemma2sottoPSU}.
If $3 \nmid (q+1)$, then $\PSU(3,q)=\PGU(3,q)$, and hence $\tilde M=M \cong \PGU(3,p^m)$ by Lemma \ref{lemma2sottoPSU}.

Now suppose that $n/m \ne 3$ and $3 \mid (q+1)$. Then either $\tilde M \cap \PSU(3,q)=M$, or $\tilde M \cap \PSU(3,q)=\PSU(3,q)$; in the latter case $\tilde M=\PSU(3,q)$.
We prove the claim by providing a maximal subgroup $\hat M$ of $\PGU(3,q)$ isomorphic to $\PGU(3,p^m)$ and showing that the case $\tilde M\not\cong\hat M$ cannot occur. Arguing as in the proof of Lemma \ref{lemma1sottoPSU}, let $\hat M$ be the subgroup of $\PGU(3,q)$ made by the elements which are defined over $\mathbb{F}_{p^{2m}}$. Then $\hat M\cong\PGU(3,q)$. Since $\hat M$ and $\PSU(3,q)$ are maximal in $\PGU(3,q)$, we have $\hat M\cap\PSU(3,q)=M\cong\PSU(3,p^m)$ and $[\hat M : M]=3$.
Let $N$ be a maximal subgroup of $\PGU(3,q)$ such that $\hat M\leq N$. Then $N\cap\PSU(3,q)\ne\PSU(3,q)$ by the maximality of $\PSU(3,q)$ in $\PGU(3,q)$; hence, $N\cap\PSU(3,q)=M$ by the maximality of $M$ in $\PSU(3,q)$, and $[N:M]=3$. Thus $|N|=|M|$ and $N=M$, proving that $\hat M$ is maximal in $\PGU(3,q)$.
Assume by contradiction that $\tilde M\not\cong\hat M$. Then by \cite[Remark 6.12]{GKeven} $\tilde M$ is a semidirect product $H\times C_3$, where $C_3=\langle\alpha\rangle$ and $\alpha\notin\hat{M}$. By Lemma \ref{classificazione}, $\alpha$ is either of type (A), or of type (B1), or of type (B3). If $\alpha$ is of type (A) or (B1), a contradiction follows as in the proof of Lemma \ref{lemma2sottoPSU}. If $\alpha$ is of type (B3) and $T$ is the $\mathbb{F}_{q^6}$-rational triangle fixed by $\alpha$, then $\alpha$ is the unique element of order $3$ in the pointwise stabilizer$C$ of $T$ in $\PGU(3,q)$, which has order $q^2-q+1$.
Since $n/m\ne3$, it is easily seen that $(p^{2m}-p^m+1)\mid(q^2-q+1)$. Thus, the intersection $C\cap\hat{M}\cong\PGU(3,p^m)$ has order $p^{2m}-p^m+1$, which is divisible by $3$; hence $\alpha\in\hat M$, a contradiction.

Finally, suppose that $n/m=3$ and $3\mid(q+1)$.
Assume by contradiction that $\tilde M\ne\PSU(3,q)$, so that $\tilde M \cap \PSU(3,q)=M$ and $|\tilde M|=3|M|=9|\PSU(3,p^m)|$. Since $M$ is normal in $\tilde M$, we have from \cite[Remark 6.12]{GKeven} that $\tilde M$ is a semidirect product $\tilde M \cong \PGU(3,p^m) \rtimes C_3$ and $3 \mid m$. Let $C_3=\langle \alpha \rangle$, so that by Lemma \ref{classificazione} $\alpha$ is either of type (A), or of type (B1), or of type (B3). Arguing as above, $\alpha \notin \PSU(3,q)$ implies that $\alpha$ is of type (B3).
Since the elements of type (A) cannot stabilize the $\mathbb{F}_{q^6}$-rational triangle fixed poitwise by $\alpha$, we have that $\alpha$ does not commute elementwise with $\PGU(3,p^m)$.
Hence, $\tilde M$ is a subgroup of the automorphism group ${\rm P\Gamma U}(3,p^m)=\PGU(3,p^m)\rtimes C_m$ of $\PGU(3,p^m)$; here, $C_m$ has order $m$ and is generated by the Frobenius automorphism $\varphi:(X,Y,Z)\mapsto(X^p,Y^p,Z^p)$. Up to multiplying by an element of $\PGU(3,p^m)$, we can assume that $\alpha\in C_m$. Thus, $\alpha$ commutes those elements of $\PGU(3,p^m)$ which are defined over $\mathbb{F}_{p}$; in particular, $\alpha$ commutes with the elements $\sigma:(x,y)\mapsto(ax,a^{q+1}y)$, $a\in\mathbb{F}_p$, which are of type (B2). This implies that $\alpha$ fixes a point $P\in {\rm PG}(2,q^2)\setminus\cH_q$, a contradiction to $\alpha$ being of type (B3).
\end{proof}

\subsection{The group $SmallGroup(36,9)$ when $q=2$}

From \cite{H}, $\PSU(3,2)$ has a maximal subgroup $H$ of order $36$. By direct checking, $H\cong SmallGroup(36,9)$ in the GAP notation.

\begin{proposition}\label{36ndispari}
Let $q=2^n$ be a power of $2$. Then $\PSU(3,q)$ contains a subgroup isomorphic to $SmallGroup(36,9)$ if and only if $n$ is odd.
\end{proposition}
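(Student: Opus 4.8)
The plan is to exploit the fact, established in Lemma~\ref{lemma1sottoPSU}, that a subgroup of $\PSU(3,q)$ isomorphic to $\PSU(3,2^m)$ is conjugate to the group $\langle\cA_m(P_\infty),w\rangle$ of $\mathbb{F}_{2^{2m}}$-rational elements, and in particular that it exists inside $\PSU(3,q)$ precisely when there is a chain of subfields realizing it. Here we have $\bar q=2^m$ with $m=1$, so $\PSU(3,2)$ is the candidate subgroup, and $SmallGroup(36,9)$ is (by Theorem~\ref{MaximalSubgroupsNotFixingPSU}, case (ix$^\prime$), and the remark preceding this proposition) a maximal subgroup of $\PSU(3,2)$ of index $2$. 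So the statement to prove is: $\PSU(3,q)=\PSU(3,2^n)$ contains a copy of this order-$36$ group if and only if $n$ is odd.

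First I would handle the ``if'' direction. When $n$ is odd, $1\mid n$ and $n/1=n$ is odd, so by Lemma~\ref{lemma1sottoPSU} (or directly by the construction of $\cA_m(P_\infty)$ with $m=1$) $\PSU(3,q)$ contains a subgroup isomorphic to $\PSU(3,2)$, which has order $72$; this subgroup then contains its index-$2$ maximal subgroup $SmallGroup(36,9)$, giving the desired copy. Next, for the ``only if'' direction, suppose $n$ is even and assume for contradiction that $G\leq\PSU(3,q)$ with $G\cong SmallGroup(36,9)$. The key structural facts I would extract from MAGMA/GAP about $SmallGroup(36,9)$ are: it has order $36=2^2\cdot3^2$, and — crucially — it contains an element of order $4$ (equivalently, a cyclic Sylow $2$-subgroup of order $4$), and no element of order $2$ outside that Sylow $2$-subgroup commutes badly; in fact $SmallGroup(36,9)$ is a specific extension, and one checks it has a (normal) subgroup of order $9$ and Sylow $2$-subgroups cyclic of order $4$. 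An element $\tau\in G$ of order $4$ is, by Lemma~\ref{classificazione}, of type (D) when $p=2$: it fixes an $\mathbb{F}_{q^2}$-rational point $P\in\cH_q$ and a tangent line $\ell$, with $(P,\ell)$ a pole-polar pair. The plan is to then analyze the normalizer $N_G(\langle\tau^2\rangle)$ or the action of the order-$9$ subgroup together with $\tau$ on the fixed structure of $\tau$ or $\tau^2$, and derive a divisibility or field-of-definition obstruction that forces $2\mid(\text{something coprime to }2)$, or forces $G$ into a maximal subgroup fixing a point/triangle, contradicting Theorem~\ref{MaximalSubgroupsNotFixingPSU}. Alternatively — and this is likely the cleanest route — one uses that $\PSU(3,2^n)$ with $n$ even contains $\PSU(3,4)$ (since $2\mid n$), and one shows that any $SmallGroup(36,9)$ inside $\PSU(3,q)$ must actually lie in a conjugate of $\PSU(3,2)$ (because the order-$9$ and order-$4$ elements force all of $G$ to be $\mathbb{F}_4$-rational), but $\PSU(3,2)\not\leq\PSU(3,4)$ since $4/2=2$ is even and hence, by the conjugacy/chain analysis of Lemma~\ref{lemma1sottoPSU}, no $\PSU(3,2)$ sits inside $\PSU(3,2^n)$ for even $n$ — and a copy of $SmallGroup(36,9)$ would generate, together with a suitable extra element, such a forbidden $\PSU(3,2)$, or at least be incompatible with the subfield structure.

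Concretely, the mechanism I expect to work is to pin down the fixed points: let $\tau\in G$ have order $4$ and let $\rho\in G$ have order $3$ lying in the order-$9$ subgroup $R\trianglelefteq G$. By Theorem~\ref{caratteri}, an order-$4$ element has $i(\tau)=2$ and is of type (D), while an order-$3$ element $\rho$ with $3\mid(q+1)$ (which holds since $q=2^n$, $n$ even, so $3\mid 2^n-1$... wait, rather $3\nmid(q+1)$ when $n$ even) — I would split on the parity of $n$ more carefully: for $n$ even, $q\equiv1\pmod 3$, so $3\nmid(q+1)$, and an order-$3$ element is of type (B2), fixing two $\mathbb{F}_{q^2}$-rational points one on and one off $\cH_q$. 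The point $P$ fixed by $\tau$ and the fixed points of $R$ must be compatible under the group action, and the stabilizer in $\PGU(3,q)$ of the type-(D) point $P$ of $\tau$ — which by the structure recalled in Section~\ref{sec:preliminari} and \cite[Lemma 11.44]{HKT} is $p$-group $\rtimes$ cyclic of order $q^2-1$ — must contain the subgroup of $G$ fixing $P$; counting with Orbit--Stabilizer and using $36\nmid |{\rm Stab}|$ appropriately, or examining which order-$3$ subgroups can normalize $\langle\tau\rangle$, one forces the relevant elements to be defined over $\mathbb{F}_4$ and hence $G\leq\PSU(3,4)$, then rules that out.

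The main obstacle will be the ``only if'' direction: showing that no embedding of $SmallGroup(36,9)$ into $\PSU(3,2^n)$ exists for even $n$. The subtlety is that $SmallGroup(36,9)$ does not fix a point or a triangle (it is a maximal subgroup of $\PSU(3,2)$ of exactly the non-fixing type), so one cannot immediately invoke the maximal-subgroup classification to confine it; instead one must show it ``wants to'' sit in a $\PSU(3,2)$, and that no such subfield subgroup exists when $n$ is even. Carrying this out cleanly probably requires either (a) a careful determination, via the $\mathbb{F}_4$-rationality of its defining matrices, that $\langle G\rangle$ forces a subfield $\mathbb{F}_4\subseteq\mathbb{F}_{q^2}$ with $[\mathbb{F}_{q^2}:\mathbb{F}_4]$ odd — impossible for even $n$ — or (b) a direct MAGMA verification that $\PSU(3,4)$ has no subgroup of order $36$ isomorphic to $SmallGroup(36,9)$, combined with Lemma~\ref{lemma1sottoPSU} to reduce the general even-$n$ case to $n=2$. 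I would aim for route (a) but fall back on (b) if the field-of-definition bookkeeping becomes unwieldy.
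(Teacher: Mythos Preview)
Your ``if'' direction is fine and matches the paper. The ``only if'' direction, however, misses the clean argument and your proposed routes both have real problems.

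The paper's proof of the ``only if'' direction is a two-line application of exactly the observation you made and then abandoned: when $n$ is even, $3\mid(q-1)$, so every order-$3$ element of $\PSU(3,q)$ is of type (B2). Now $SmallGroup(36,9)$ contains an elementary abelian $3$-subgroup $E$ of order $9$. Take $\sigma,\tau\in E$ with $\tau\notin\langle\sigma\rangle$. Since $\tau$ commutes with $\sigma$, it permutes the three fixed points $\{P_1,P_2,P_3\}$ of $\sigma$, where $P_1\notin\cH_q$ and $P_2,P_3\in\cH_q$ (note: type (B2) fixes \emph{two} points on $\cH_q$ and one off, not one on and one off as you wrote). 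As $\tau$ preserves $\cH_q$, it fixes $P_1$ and permutes $\{P_2,P_3\}$; having order $3$, it fixes both. Thus $E$ fixes $P_2,P_3\in\cH_q$, and by \cite[Lemma 11.44(c)]{HKT} the stabilizer of two points of $\cH_q$ in $\PGU(3,q)$ has cyclic $p'$-part, forcing $E$ cyclic --- contradiction. No order-$4$ element, no field-of-definition bookkeeping, no MAGMA.

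Your route (a) is too vague to evaluate, and the underlying intuition is shaky: $\mathbb{F}_4$ is always a subfield of $\mathbb{F}_{q^2}=\mathbb{F}_{2^{2n}}$, so the obstruction is not about subfield containment but about whether the Hermitian form over $\mathbb{F}_{q^2}$ restricts to one over $\mathbb{F}_4$, which is a more delicate matter than you indicate. Your route (b) has a genuine gap: Lemma~\ref{lemma1sottoPSU} gives $\PSU(3,4)\leq\PSU(3,2^n)$ only when $n/2$ is odd, so for $4\mid n$ there is no reduction to $n=2$; and even when $\PSU(3,4)$ does embed, nothing you have said forces a hypothetical $SmallGroup(36,9)$ to lie inside that copy. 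The lesson here is that once you noticed the (B2) typing of the order-$3$ elements, the noncyclic $3$-Sylow of $SmallGroup(36,9)$ is the only extra ingredient needed.
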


\begin{proof}
If $n$ is odd, the claim follows from Theorem \ref{MaximalSubgroupsNotFixingPSU} as $PSU(3,q)$ contains $\PSU(3,2)$.
Conversely, suppose by contradiction that $SmallGroup(36,9)\leq\PSU(3,q)$ with $n$ even. Then $\PSU(3,q)$ contains an elementary abelian $3$-subgroup $E$ of order $9$, whose nontrivial elements are of type (B2) from Lemma \ref{classificazione}.
Let $\sigma,\tau\in E\setminus\{id\}$ with $\tau\notin\langle\sigma\rangle$; then $\tau$ acts on the fixed points $\{P_1,P_2,P_3\}$ of $\sigma$, where $P_1\in\PG(2,q^2)\setminus\cH_q$ and $P_2,P_3\in\cH_q(\mathbb{F}_{q^2})$. By $E\leq\aut(\cH_q)$ and the Orbit-Stabilizer Theorem, $E$ fixes $\{P_1,P_2,P_3\}$ pointwise.
Hence, $E$ is cyclic by \cite[Lemma 11.44 (c)]{HKT}, a contradiction.
\end{proof}

\begin{remark}\label{rimarco2}
Let $q$ be an odd power of $2$. Then the only subgroups of $\PGU(3,q)$ properly containing $SmallGroup(36,9)$ are $\PSU(3,2^k)$ and $\PGU(3,2^k)$, where $k\mid n$.

In fact, by direct checking, any degree $3$ extension $M$ of $SmallGroup(36,9)$ contains a normal subgroup of order $3$; if $M\leq\PGU(3,q)$ then $M$ would fix a point or a triangle and the same does $SmallGroup(36,9)$, a contradiction.
Then the claim follows from Theorem {\rm \ref{MaximalSubgroupsNotFixingPSU}}.
\end{remark}

\begin{remark}\label{rimarco3}
When $q$ is even, the only subgroups $G$ of $\PSU(3,q)$ which do not fix any point or triangle are $G\cong\PSU(3,2^k)$ when $k\mid n$, $n$ is odd, and $3$ divides $n/k$; $G\cong\PGU(3,2^k)$ when $k\mid n$ ; and $G\cong SmallGroup(36,9)$ when $n$ is even.

In fact, from Theorem \ref{MaximalSubgroupsNotFixingPSU}, either $q=2$ and $G\leq SmallGroup(36,9)$; or $G\leq\PSU(3,2^k)$; or $G\leq\PGU(3,2^k)$.
If $G< SmallGroup(36,9)$, then $G$ contains a cyclic normal subgroup, and hence $G$ fixes a point or a triangle.
If $G<\PSU(3,2^k)$ or $G<\PGU(3,2^k)$, apply inductively the same argument.
\end{remark}

\section{Genera of $\cH_q/G$ when $G\leq\PGU(3,q)$ has no fixed points or triangles in $\PG(2,\bar{\mathbb{F}}_{q^2})$}\label{sec:generi}

In this section we compute the genera of all quotients $\cH_q/G$ such that $G\leq\PGU(3,q)$ has no fixed points (hence also no fixed lines) nor fixed triangles in the plane $\PG(2,\bar{\mathbb{F}}_{q^2})$. This is equivalent to require that $G$ has no fixed points or triangles in $\PG(2,q^6)$ (see Lemma \ref{classificazione}).
We proceed with a case-by-case analysis on maximal subgroups $M$ of $\PSU(3,q)$ and $\PGU(3,q)$ containing $G$, as described in Theorem \ref{MaximalSubgroupsNotFixingPSU} and Section \ref{sec:sottogruppimassimaliPGU}.
In order to avoid the subgroups $G\leq\PGU(3,q)$ which fix a point or a triangle, the following remark will be useful.

\begin{remark}\label{rimarco}
If $G\leq\PGU(3,q)$ has a cyclic normal subgroup $C$, then $G$ fixes either a point or a triangle in $\PG(2,q^6)$. In fact, $G$ acts on the points fixed by a generator $\alpha$ of $C$; hence, according to Lemma {\rm \ref{classificazione}}, $G$ fixes either a point, if the type of $\alpha$ is {\rm (A)} or {\rm (B2)} or {\rm (C)} or {\rm (D)} or {\rm (E)}; or a triangle, if the type of $\alpha$ is {\rm (B1)} or {\rm (B3)}.
\end{remark}

\subsection{$G$ is a subgroup of the Hessian group $H_{216}\cong\PGU(3,2)$}

\begin{proposition}\label{propohessian}
Let $q=p^n$ be a power of an odd prime $p$ with $3\mid(q+1)$, and $H$ be a subgroup of $\PGU(3,q)$ isomorphic to the Hessian group $H_{216}\cong\PGU(3,2)$.
If $G$ is a subgroup of $H$ such that $G$ has no fixed points or triangles in $\PG(2,q^6)$, then the genus of $\cH_q/G$ is one of the following:
\begin{equation}\label{hessian1}
\frac{q^2-34q+397-54\delta}{432},\quad \frac{q^2-10q+133-54\delta}{144},\quad \frac{q^2-10q+61-18\delta}{72},\quad \textrm{where}\quad \delta=\begin{cases} 0, & \textrm{if}\quad 4\mid(q+1), \\ 2, & \textrm{if}\quad 4\mid(q-1), \end{cases}
\end{equation}
and $G\cong\PGU(3,2)$, $G\cong\PSU(3,2)$, $G\cong SmallGroup(36,9)$, respectively.
Conversely, if $\bar g$ is one of the integers in Equation \eqref{hessian1}, then there exists a subgroup $G$ of $H$ such that $G$ has no fixed points or triangles in $\PG(2,q^6)$ and $g(\cH_q/G)=\bar g$.
\end{proposition}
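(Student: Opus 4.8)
The plan is to exploit the fact that, by Theorem \ref{MaximalSubgroupsNotFixingPSU} and Remark \ref{rimarco} (or rather its analogue for the small groups involved), the only subgroups $G\leq H\cong\PGU(3,2)$ which fix no point and no triangle in $\PG(2,q^6)$ are, up to conjugation, $G\cong\PGU(3,2)$, $G\cong\PSU(3,2)$ and $G\cong SmallGroup(36,9)$. Indeed every other subgroup of the Hessian group $H_{216}$ has a cyclic normal subgroup, hence by Remark \ref{rimarco} fixes a point or a triangle; this reduces the proposition to computing $g(\cH_q/G)$ for exactly these three groups, and checking that each of them really does act without fixed points or triangles (the latter following again from Theorem \ref{MaximalSubgroupsNotFixingPSU}, applied with $\bar q=2$, together with Proposition \ref{36ndispari}). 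So the first step is this group-theoretic enumeration: list the conjugacy classes of subgroups of $SmallGroup(216,88)\cong\PGU(3,2)$ (e.g. by a direct MAGMA check), identify which have no cyclic normal subgroup, and observe that the survivors are precisely the three claimed groups.

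The second and main step is the Riemann--Hurwitz computation for each of the three groups. For $G\in\{\PGU(3,2),\PSU(3,2),SmallGroup(36,9)\}$ I would use $2g(\cH_q)-2 = q^2-q-2$ and
$$2g(\cH_q/G)-2 = \frac{1}{|G|}\Bigl((q^2-q-2) - \sum_{\sigma\in G\setminus\{\mathrm{id}\}} i(\sigma)\Bigr),$$
so everything comes down to evaluating $\Delta=\sum_{\sigma\neq\mathrm{id}} i(\sigma)$ via Theorem \ref{caratteri}. Since $p$ is odd and $q$ is coprime to $|G|$ divisors except for the prime $2$ dividing $q\pm1$ and the prime $3$ dividing $q+1$, each nontrivial $\sigma\in G$ has order in $\{2,3,4,6\}$ (reading off the element orders of the three abstract groups), and one must decide the geometric type of each such element. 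Elements of order $3$: those are either of type (B1), contributing $0$, or of type (B3), contributing $3$ — here the key point, already used in Case (2) of the proof of Proposition \ref{maxPGLconica}, is that an order-$3$ element of type (B1) lies in $\PSU(3,q)$ (diagonalizable as $\diag(\lambda,\lambda^{-1},1)$), so in $\PGU(3,2)\setminus\PSU(3,2)$ the order-$3$ elements that are ``extra'' homologies are of type (A) with $i(\sigma)=q+1$; one has to count homologies versus (B3)-elements inside each group. Elements of order $2$ are of type (A) (since $2\mid q+1$), with $i(\sigma)=q+1$. Elements of order $4$ exist only when $4\mid q-1$ (this is exactly the role of $\delta$): if $4\mid q+1$ the Sylow $2$-subgroup of $\PGU(3,q)$ relevant here forces order-$4$ elements to be absent or to power into (B)-type behaviour, while if $4\mid q-1$ an order-$4$ element is of type (B2) with $i(\sigma)=2$; tracking how many order-$4$ elements each of the three groups contains, and how $\delta$ enters, is the delicate bookkeeping. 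Elements of order $6$ (present in $\PGU(3,2)$ and $SmallGroup(36,9)$) are of type (E) only if $p\mid6$, which cannot happen for $p$ odd $\neq3$; for $3\mid q+1$ and $p\neq 3$, order-$6$ elements have order coprime to $p$ and are of type (B1), (B2), or a product of a homology of order $2$ and one of order $3$ — each of these is handled by the relevant line of Theorem \ref{caratteri}, typically giving $i(\sigma)\in\{0,2,q+1\}$ depending on the exact type, and again the count must be pinned down by the conjugacy-class data of the abstract group together with the constraint $3\mid q+1$.

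Concretely, I would make a table: for $G=\PGU(3,2)$ (order $216$) record the number of elements of each order ($2,3,4,6$ and also $3$ again split by type) and their geometric types; likewise for $\PSU(3,2)$ (order $72$) and $SmallGroup(36,9)$ (order $36$). Then $\Delta$ becomes an explicit linear expression in $q$ and in $\delta$ (the $\delta$-dependence coming solely from whether order-$4$ — and possibly order-$6$ — elements of type (B2) exist, i.e. from $4\mid q\pm1$). Plugging $\Delta$ into Riemann--Hurwitz and solving for $g(\cH_q/G)$ should yield exactly the three expressions $\frac{q^2-34q+397-54\delta}{432}$, $\frac{q^2-10q+133-54\delta}{144}$, $\frac{q^2-10q+61-18\delta}{72}$; I would double-check integrality of these against the congruence conditions $3\mid q+1$, $9\nmid q+1$, and the parity of the $\delta$-case. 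The converse direction is then immediate: given that each of the three groups $G$ genuinely embeds in $H$ and acts without fixed points or triangles (by the enumeration in step one and by Proposition \ref{36ndispari} for the $SmallGroup(36,9)$ case, where one also needs $n$ to be appropriate — but here $G$ sits inside $H\cong\PGU(3,2)$ which is assumed to exist, so no extra condition arises), the displayed genus is attained. The main obstacle I anticipate is the correct determination of the geometric type of the order-$4$ and order-$6$ elements and their multiplicities as a function of the residue of $q$ modulo $4$; getting the coefficient of $\delta$ right in all three formulas is where an error is most likely to creep in, and I would verify it by specializing to a small prime power $q$ with $3\mid q+1$ (say $q=5$ or $q=11$) and comparing with a direct genus computation.
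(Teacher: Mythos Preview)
Your overall strategy matches the paper's: enumerate the subgroups of $H_{216}$ without cyclic normal subgroup, then feed the element-type counts into Riemann--Hurwitz via Theorem~\ref{caratteri}. But several concrete steps in your sketch are wrong or missing, and one of them is the heart of the argument.

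First, two factual slips. Elements of order $4$ do not ``exist only when $4\mid q-1$'': the abstract group $\PGU(3,2)$ has $54$ such elements regardless; what depends on $q\bmod 4$ is their \emph{type} (and hence $i(\sigma)$), namely (B1) with $i=0$ when $4\mid(q+1)$ and (B2) with $i=2$ when $4\mid(q-1)$. This is precisely where $\delta$ enters. Also, $SmallGroup(36,9)\cong (C_3\times C_3)\rtimes C_4$ contains no elements of order $6$: its $35$ nontrivial elements are $9$ involutions, $18$ of order $4$, and $8$ of order $3$.

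The real gap is the determination of the types of the order-$3$ elements. Your criterion ``type (B1) $\Rightarrow$ lies in $\PSU(3,q)$'' is correct but does not separate the classes: when $9\mid(q+1)$ the whole of $H\cong\PGU(3,2)$ sits inside $\PSU(3,q)$, so membership in $\PSU(3,q)$ says nothing. In $H$ there are three distinct conjugacy classes of order-$3$ subgroups (of sizes $4$, $12$, $24$), and the paper has to work hard to show that the $8$ elements in the first class are of type (B1), the $24$ in the second class are homologies of type (A), and the $48$ in the third class are of type (B1). None of this is forced by order or by determinant; the paper rules out (B3) using normalizing involutions, and then distinguishes (A) from (B1) by delicate incidence arguments about self-polar triangles and by an integrality check in Riemann--Hurwitz. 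Without pinning down these $24$ homologies you cannot get the coefficient $-34q$ in the first formula (the $24(q+1)$ contribution is essential).

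Finally, your appeal to Proposition~\ref{36ndispari} to certify that $SmallGroup(36,9)$ fixes no point or triangle does not apply here: that proposition is for $p=2$. The paper verifies this directly for odd $q$ by a separate short argument using the element types just established.
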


\begin{proof}
%The proper nontrivial subgroups of $H$ up to conjugation are the following.
%\begin{enumerate}
%\item $K_1 \cong \PSU(3,2)$, of order $72$.
%\item $K_2 \cong (C_3 \times C_3) \rtimes C_6$, containing a normal subgroup of order $3$.
%\item $K_3 \cong (C_3 \times C_3) \rtimes C_4$, of order $36$, contained in $K_1$, without cyclic normal subgroups.
%\item $K_4 \cong{\rm Q}_8 \rtimes C_3$, containing a central involution.
%\item $K_5 \cong (C_3 \times C_3) \rtimes C_3$, containing a normal subgroup of order $3$.
%\item $K_6 \cong (C_3 \times C_3) \rtimes C_2$, containing a normal subgroup of order $3$.
%\item $K_7 \cong (C_3 \times C_3) \rtimes C_2$, containing a normal subgroup of order $3$.
%\item $K_8 \cong {\rm Q}_8$, containing a central involution.
%\item $K_9\cong C_3\times C_3$, elementary abelian of order 9.
%\item $K_{10}\cong C_3\times C_3$, elementary abelian of order 9.
%\item $K_{11}\cong C_3\times C_3$, elementary abelian of order 9.
%\item $K_{12}$, cyclic of order $6$.
%\item $K_{13} \cong {\rm S}_3$, containing a normal subgroup of order $3$.
%\item $K_{14}$, cyclic of order $4$.
%\item $K_{15}$, cyclic of order $3$.
%\item $K_{16}$, cyclic of order $2$.
%\end{enumerate}
By direct checking, $H\cong\PGU(3,2)$ has just $3$ conjugacy classes of subgroups which do not admit cyclic normal subgroups; namely, they are $H$ itself, $K\cong\PSU(3,2)$ (which is normal in $H$), and three groups $L_1,L_2,L_3\cong SmallGroup(36,9)$ of order $36$ (which are normal in $K$).
By Remark \ref{rimarco}, we only have to consider the cases $G\in\{H,K,L_1\}$.
To prove the claim, we will show that the first, second, and third value in Equation \eqref{hessian1} are the genus of $\cH_q/G$ with $G=H$, $G=K$, and $G=L_1$, respectively.

{\bf Case $G=H\cong\PGU(3,2)$.} The group $G$ contains
\begin{itemize}
\item a conjugacy class $S_1$ consisting of $9$ elements of order $2$;
\item a conjugacy class $S_2$ consisting of $8$ elements of order $3$;
\item a conjugacy class $G_3$ consisting of $12$ subgroups of order $3$; $G_3$ splits into $2$ conjugacy class $S_3$, $S_3^{\prime}$, each one consisting of $12$ elements of order $3$;
\item a conjugacy class $G_4$ consisting of $24$ subgroups of order $3$; $G_4$ splits into $2$ conjugacy class $S_4$, $S_4^{\prime}$, each one consisting of $24$ elements of order $3$;
\item a conjugacy class $S_5$ consisting of $54$ elements of order $4$;
\item a conjugacy class $G_6$ consisting of $36$ cyclic subgroups of order $6$; $G_6$ splits into $2$ conjugacy class $S_6$, $S_6^{\prime}$, each one consisting of $36$ elements of order $6$.
\end{itemize}
We use Lemma \ref{classificazione} to determine the type of nontrivial elements $\sigma\in G$.
If $\sigma\in S_1$, then $\sigma$ is of type (A).

Let $\sigma\in S_5$. If $4\mid(q-1)$, then $\sigma$ is of type (B2). If $4\mid(q+1)$, then $\sigma$ is of type (B1). In fact, if $\sigma$ is not of type (B1), then $\sigma$ is of type (A) and $S_5$ contains only another element $\tau$ different from $\sigma$ such that $\tau^2=\sigma^2$, namely $\tau=\sigma^{-1}$, because the homologies with given center form a cyclic group; but the number $|S_1|=9$ of involutions in $G$ is strictly smaller than $|S_5|$, a contradiction.

Let $\sigma\in S_6$. Recall that $6\mid(q+1)$. Since a cyclic group of order $6$ contains exactly $2$ elements of order $6$ and $|S_6|/2 >|C_1|$, the same argument used for $S_5$ shows that $\sigma$ is of type (B1).

Let $\sigma\in S_2$.
By direct checking, $\sigma$ is contained in a subgroup $S_3=\langle\sigma\rangle\rtimes C_2$ of $H$. Hence, an involution acts on the fixed points of $\sigma$, so that $\sigma$ is not of type (B3), because an element of type (A) cannot fix a point which is not $\mathbb{F}_{q^2}$-rational.
The elements of $S_2$, together with the identity, form an elementary abelian group $C_3\times C_3$. Being conjugated, the elements of $S_2$ are all of the same type. If $C_3\times C_3$ is generated by elements of type (A), then $C_3\times C_3$ contains $2$ elements of type (B1); in fact, using the Fermat model of $\cH_q$ we have up to conjugation in $\PGU(3,q)$ that $C_3\times C_3=\{\diag(\lambda,\mu,1)\mid\lambda^3=\mu^3=1\}$, and $\diag(\rho,\rho^{-1},1)$, $\diag(\rho^{-1},\rho,1)$ are of type (B1), where $\rho$ is a primitive cube root of unity.
Therefore, the elements of $S_2$ are of type (B1).

Let $\sigma\in S_3$ or $\sigma\in S_3^{\prime}$; since elements of $S_3$ and elements of $S_3^{\prime}$ are inverse each other, they are of the same type. As $\langle\sigma\rangle$ is normalized by an involution, the same argument used for $S_2$ shows that $\sigma$ is not of type (B3).
We show that $\sigma$ is of type (A); to this aim, assume by contradiction that the elements of $S_3$ and $S_3^{\prime}$ are of type (B1).
By direct checking, $\sigma$ is contained in an elementary abelian $3$ group $E=C_3\times C_3$; $E$ contains a subgroup $\bar{C}_3$ of order $3$ whose nontrivial elements are in $S_2$, while every element in $E\setminus\bar{C}_3$ is in $S_3$ or $S_3^{\prime}$.
Let $\sigma^{\prime}\in E\setminus\langle\sigma\rangle$, and denote by $T=\{P_1,P_2,P_3\}$ and $T^{\prime}=\{P_1^{\prime},P_2^{\prime},P_3^{\prime}\}$ the triangles fixed pointwise by $\sigma$ and $\sigma^{\prime}$, respectively.
Then $T\ne T^{\prime}$, since there are just $3$ elements of order $3$ and type (B1) which fix $T$ pointwise.
Also, $T$ and $T^{\prime}$ have no vertex in common. In fact, $\sigma$ acts on $T^{\prime}$, because $\sigma$ and $\sigma^{\prime}$ commute; if $T\ne T^{\prime}$ and $P_1=P_1^{\prime}$ (as represented in the following picture), then $\sigma$ acts on $\{P_2^{\prime},P_3^{\prime}\}$ with long orbits, a contradiction to $o(\sigma)=3$.

\begin{tikzpicture}
\draw (0,0) node[anchor=north]{$P_3$}
  -- (3,0) node[anchor=north]{$P_2$}
  -- (3,3) node[anchor=south]{$P_1 = P_1^\prime$}
  -- cycle;
\draw (8,0) node[anchor=north]{$P_3^\prime$}
  -- (5,0) node[anchor=north]{$P_2^\prime$}
  -- (3,3) node[anchor=south]{}
  -- cycle;
 \draw[very thin,color=gray] (3,0) grid (6,0);
\end{tikzpicture}

Therefore, the vertices of the triangles $T_1,T_2,T_3,T_4$ fixed pointwise by some nontrivial element of $E$ are $12$ distinct points altogether.
By direct checking, the normalizer of $E$ in $G$ contains a subgroup $N\cong E\rtimes C_2$ of order $18$.
Let $C_2=\langle\tau\rangle$. Since $\tau$ normalizes $E$, $\tau$ acts on $T_1,T_2,T_3,T_4$.
By direct checking, $N$ contains elements of order $6$. Hence, $\tau$ commutes with some nontrivial $\alpha\in E$, and thus $\tau$ fixes pointwise a triangle $T_i$, say $T_1=\{P,Q,R\}$; we can assume that the center of the homology $\tau$ is $P$.
Then the involution $\tau$ acts on the remaining $9$ vertices, and thus fixes another point $S$, vertex of a triangle $\{S,U,V\}$.
Since $\tau$ fixes $S$ and $S$ cannot be the center of $\tau$, $S$ is a point of the axis $QR$ of $\tau$, as represented in the following picture.

\begin{tikzpicture}[scale=0.73]
\draw (0,0) node[anchor=north]{$R$}
  -- (4,0) node[anchor=north]{$Q$}
  -- (4,4) node[anchor=south]{$P$}
  -- cycle;
 \draw[very thin,color=gray] (4,0) grid (12,0);
 \draw[color=black] (4,4) -- (12,0);
\draw (6,0) node[anchor=north]{$S$};
 \draw[thin] (6,0) -- (6,3);
 \draw[thin, color=black] (6,0) -- (9,1.5);
\draw (6.2,3.7) node[anchor=north]{$U$};
\draw (9.2,2.2) node[anchor=north]{$V$};
\end{tikzpicture}

Since $\alpha$ acts on the line $QR$ and $\alpha$ acts also on the triangle $\{S,U,V\}$, $\alpha$ should fix the point $S$, a contradiction. We have then shown that the elements in $S_3$ and $S_3^{\prime}$ are of type (A).

Let $\sigma\in S_4$ or $\sigma\in S_4^{\prime}$; the elements of $S_4$ and $S_4^{\prime}$ are of the same type as they are inverse each other.
Let $\gamma\in\{0,3,q+1\}$ be such that $i(\sigma)=\gamma$.
Then, by the Riemann-Hurwitz formula and Theorem \ref{caratteri},
$$q^2-q-2=216\left(2g(\cH_q/G)-2\right)+9(q+1)+24(q+1)+48 \gamma + 54 \delta,$$
where $\delta=0$ if $4\mid(q+1)$, and $\delta=2$ if $4\mid(q-1)$.
As $g(\cH_q/G)$ is an integer, we have $48\gamma\equiv0\pmod{27}$, and hence $\gamma\in\{0,q+1\}$. We show that $\gamma=0$.
By direct checking, $H$ contains $8$ elementary abelian $3$-subgroups $E_1,\ldots,E_8$ of order $9$; for any $i=1,\ldots,8$, $L_i$ contains $2$ elements of $S_2$ and $6$ elements of $S_4\cup S_4^{\prime}$; for any $\tau\in S_2$, there exist exactly $2$ indexes $i,j\in\{1,\ldots,8\}$ such that $\langle\tau\rangle\subset E_i$ and $\langle\tau\rangle\subset E_j$.
Assume by contradiction that $\gamma=q+1$. Hence, the elements of $E_i\setminus\langle\tau\rangle$ and $E_j\setminus\langle\tau\rangle$ are of type (A). Then $E_i$ and $E_j$ are generated by elements of type (A), so that they fix pointwise two triangles $T_i$ and $T_j$. Thus, $\tau$ is an element of type (B1) fixing both $T_i$ and $T_j$ pointwise; this implies $T_i=T_j$. This yields the contradiction $E_i=E_j$, because the pointwise stabilizer of $T_i$ has the form $C_{q+1}\times C_{q+1}$ which has a unique elementary abelian $3$-subgroup of order $9$.

To sum up, the elements of $S_1$, $S_3$, and $S_3^{\prime}$ are of type (A); the elements of $S_2$, $S_4$, $S_4^{\prime}$, $S_6$, and $S_6^{\prime}$ are of type (B1); the elements of $S_5$ are of type (B1) or (B2) according to $4\mid(q+1)$ and $4\mid(q-1)$, respectively.
By the Riemann-Hurwitz formula and Theorem \ref{caratteri},
$$ g(\cH_q/G)=\frac{q^2-34q+397-54\delta}{432},\qquad\textrm{where}\quad \delta=\begin{cases} 0, & \textrm{if}\quad 4\mid(q+1), \\ 2, & \textrm{if}\quad 4\mid(q-1). \end{cases} $$

{\bf Case $G=K\cong\PSU(3,2)$.} The group $G$ contains $9$ elements of order $2$, $54$ elements of order $4$, and $8$ elements of order $3$ contained in $S_2$. By the Riemann-Hurwitz formula,
$$ q^2-q-2=72\left(2g(\cH_q/G)-2\right)+9(q+1)+54\delta+8\cdot0, $$
and hence
$$ g(\cH_q/G)=\frac{q^2-10q+133-54\delta}{144}. $$

{\bf Case $G=L_1\cong SmallGroup(36,9)$.} The group $G$ contains $9$ elements of order $2$, $18$ elements of order $4$, and $8$ elements of order $3$ contained in $S_2$. By the Riemann-Hurwitz formula,
$$ g(\cH_q/G)=\frac{q^2-10q+61-18\delta}{72}. $$

Finally, we note that $L_1$ does not fix any point or triangle, and hence the same holds for $K$ and $H$ which contain $L_1$.
In fact, the group $L_1$ cannot fix any point $P\in\cH_q$, since $L_1$ contains elements of type (B1). The group $L_1$ cannot fix any point $P\in\PG(2,q^2)\setminus\cH_q$, since $L_1$ contain an elementary abelian $3$-subgroup $C_3\times C_3$ whose nontrivial elements are of type (B1); so that if $\sigma_1,\sigma_2\in (C_3\times C_3)\setminus\{id\}$ and $\sigma_2\notin\langle\sigma_1\rangle$, then $\sigma_2$ acts without fixed points on the $3$ points fixed by $\sigma_1$.
The group $L_1$ cannot fix any triangle $T\subset\cH_q(\mathbb{F}_{q^6})$, since the stabilizer of $T$ in $\PGU(3,q)$ has odd order $3(q^2-q+1)$ unlike $L_1$.
The group $L_1$ cannot fix any triangle $T\subset\PG(2,q^2)\setminus\cH_q$, since the elements of order $4$ should fix a vertex of $T$ and interchange the other two vertexes of $T$; so that their squares, that is the $9$ involutions of $L_1$, should fix $T$ pointwise, a contradiction to the fact that there are exactly $3$ involutions in $\PGU(3,q)$ fixing $T$ pointwise.
\end{proof}

\subsection{$G$ is a subgroup of $\PGL(2,q)$ preserving a conic}

Recall that $\PGU(3,q)$ contains a subgroup $H\cong\PGL(2,q)$ preserving a conic $\cC$, namely a Baer conic, i.e. the restriction of an irreducible conic to a Baer subplane of $\PG(2,q^2)$; see \cite{CE} for a description of $\cC$ and $H$.

\begin{proposition}\label{propopgl}
Let $q=p^n$ be a power of an odd prime $p$, and $H\cong\PGL(2,q)$ be a subgroup of $\PGU(3,q)$ preserving an irreducible conic $\cC$.
If $G$ is a subgroup of $H$ such that $G$ has no fixed points or triangles in $\PG(2,q^6)$, then the genus of $\cH_q/G$ is one of the following:
\begin{equation}\label{pgl1}
\frac{q^2-16q+103-24\gamma-20\delta}{120},
\end{equation}
when $p=5$ or $5\mid(q^2-1)$, with $G\cong{\rm A}_5$ and
$$\delta=\begin{cases} 2, \ if \ either \ p=3 \ or \ 3 \mid (q-1), \\ 0, \ if \ 3 \mid (q+1),\end{cases} \quad and \quad \gamma=\begin{cases} 0, \ if  \ 5 \mid (q+1), \\ 2, \ if \ p=5 \ or \ 5 \mid (q-1); \end{cases}$$
\begin{equation}\label{pgl2}
\frac{q^2-q-2-\Delta}{\bar q (\bar q +1)(\bar q -1)}+1,
\end{equation}
where $q=\bar{q}^h$, $\bar q\ne3$, $G\cong\PSL(2,\bar q)$, and
\begin{itemize}
\item $\Delta=+2(\bar q -2)( \bar q+1) +2 \frac{\bar q(\bar q+1)}{2} \bigg( \frac{\bar q-1}{2}-2\bigg)+\frac{\bar q ( \bar q+1)}{2}(q+1)+ \delta \frac{\bar q(\bar q-1)}{2} \bigg( \frac{\bar q+1}{2}-1\bigg),$ if $\bar q \equiv 1 \pmod 4$,
\item $\Delta=+ 2(\bar q -1)( \bar q+1) + 2\frac{\bar q(\bar q+1)}{2} \bigg( \frac{\bar q-1}{2}-1\bigg)+\frac{\bar q ( \bar q+1)}{2}(q+1)+\delta \frac{\bar q(\bar q-1)}{2} \bigg( \frac{\bar q+1}{2}-2\bigg)$, if $\bar q \equiv 3 \pmod 4$,
$${\rm with} \ \delta=\begin{cases} 2, \ if \ h \ is \ even, \\ 0, \ otherwise ;\end{cases}$$
\end{itemize}
\begin{equation}\label{pgl3}
\frac{q^2-q-2-\Delta}{2\bar q (\bar q +1)(\bar q -1)}+1,
\end{equation}
where $q=\bar{q}^h$, $\bar q\ne3$, $G\cong\PGL(2,\bar q)$, and
$$\Delta=2(\bar q-1)(\bar q+1)+\frac{\bar q ( \bar q+1)}{2}(q+1)+\frac{\bar q ( \bar q-1)}{2}(q+1)+2\frac{\bar q ( \bar q+1)}{2}(\bar q-1-2)+\delta \frac{\bar q ( \bar q-1)}{2}(\bar q+1-2)$$
and
$$\delta=\begin{cases} 2, \ if \ h \ is \ even, \\ 0, \ otherwise .\end{cases}$$
Conversely, if $\bar g$ is one of the integers in Equations \eqref{pgl1} to \eqref{pgl3}, then there exists a subgroup $G$ of $H$ such that $G$ has no fixed points or triangles in $\PG(2,q^6)$ and $g(\cH_q/G)=\bar g$.
\end{proposition}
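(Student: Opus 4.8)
\smallskip
\noindent\textbf{Outline of the argument.}
The plan is to run the three-step scheme of Section~\ref{sec:generi}. First one determines which subgroups $G\le H\cong\PGL(2,q)$ have no fixed point or triangle in $\PG(2,q^6)$; then, for each such $G$, one identifies the type (in the sense of Lemma~\ref{classificazione}) of every nontrivial element; finally one inserts the contributions $i(\sigma)$ of Theorem~\ref{caratteri} into the Riemann--Hurwitz formula $2g(\cH_q)-2=|G|\bigl(2g(\cH_q/G)-2\bigr)+\sum_{\sigma\ne \mathrm{id}}i(\sigma)$, where $g(\cH_q)=q(q-1)/2$. For the first step I would invoke Dickson's classification of the subgroups of $\PGL(2,q)$: such a subgroup is cyclic, dihedral, an elementary abelian $p$-group, a semidirect product of an elementary abelian $p$-group by a cyclic group, or one of ${\rm A}_4$, ${\rm S}_4$, ${\rm A}_5$, $\PSL(2,\bar q)$, $\PGL(2,\bar q)$ with $q=\bar q^h$. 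Every group in the first four families has a cyclic normal subgroup and is eliminated by Remark~\ref{rimarco}; the same applies to $\PSL(2,2)\cong\PGL(2,2)\cong {\rm S}_3$. For ${\rm A}_4$ and ${\rm S}_4$ --- hence for $\PSL(2,3)\cong {\rm A}_4$ and $\PGL(2,3)\cong {\rm S}_4$ --- the normal Klein four-subgroup consists of three pairwise commuting involutions, which are homologies of type~{\rm (A)} by Lemma~\ref{classificazione}; three such homologies are simultaneously diagonalizable and their centers are the vertices of a self-polar triangle that they fix pointwise, so ${\rm A}_4$ and ${\rm S}_4$ stabilize that triangle. This leaves exactly $G\in\{{\rm A}_5,\ \PSL(2,\bar q),\ \PGL(2,\bar q)\}$ with $\bar q\ne3$. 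Conversely none of these three fixes a point, a line, or a triangle in $\PG(2,q^6)$: the $3$-dimensional module of $\PGL(2,q)$ on binary quadratic forms (the very module realizing $H$ as the conic stabilizer) restricts irreducibly to each of them --- it is the reduction of the standard $3$-dimensional module of ${\rm A}_5$, resp.\ the symmetric square of the defining module of $\SL(2,\bar q)$ --- so there is no invariant point or line; and a $G$-invariant triple of points would be fixed pointwise by the simple group ${\rm A}_5$, resp.\ by $\PSL(2,\bar q)\triangleleft\PGL(2,\bar q)$, because the induced action on three points is solvable, contradicting this irreducibility. Hence exactly the three families occur.

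The core of the proof is the second step, carried out separately for each family. Here one uses the explicit model of \cite{CE} realizing $H\cong\PGL(2,q)$ as the stabilizer in $\PGU(3,q)$ of a Baer conic $\cC$, together with the fact (\cite[Lemma~3.1]{CE}) that $I=\cC\cap\cH_q$ is a single $H$-orbit of size $q+1$ on which $H$ acts as $\PGL(2,q)$ on $\PG(1,q)$. Combining this with Lemma~\ref{classificazione} and the divisibility of $\ord(\sigma)$ by $q\pm1$, $q^2-1$ or $p$, one obtains: every involution of $H$ is of type~{\rm (A)}, so $i(\sigma)=q+1$; a non-involution element whose order divides $\bar q-1$ fixes two points of $I\subseteq\cH_q$ and, its order not dividing $q+1$, is of type~{\rm (B2)} with $i(\sigma)=2$; an element whose order divides $\bar q+1$ is of type~{\rm (B1)} with $i(\sigma)=0$ when its order divides $q+1$ --- which is precisely the case $h$ odd --- and of type~{\rm (B2)} with $i(\sigma)=2$ otherwise; and a $p$-element of $H$ fixes exactly one point of $I\subseteq\cH_q$, its unitary Jordan form on $\PG(2,q^2)$ --- again read from the model in \cite{CE} --- deciding whether it is of type~{\rm (C)} or~{\rm (D)}, hence whether $i(\sigma)=q+2$ or $i(\sigma)=2$. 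The parameters $\gamma,\delta$ in \eqref{pgl1}--\eqref{pgl3} encode precisely these alternatives: for ${\rm A}_5$, $\delta$ says whether the $20$ elements of order $3$ are of type~{\rm (B2)} (when $p=3$ or $3\mid q-1$) or of type~{\rm (B1)} (when $3\mid q+1$), and $\gamma$ does the same for the $24$ elements of order $5$; for $\PSL(2,\bar q)$ and $\PGL(2,\bar q)$, $\delta$ records whether the non-split-torus elements are of type~{\rm (B1)} or~{\rm (B2)}, according to the parity of $h$.

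For the third step one lists the conjugacy classes of ${\rm A}_5$, of $\PSL(2,\bar q)$ --- separately for $\bar q\equiv1$ and $\bar q\equiv3\pmod4$, since both the (unique) involution class and the location of the central involutions in the split/non-split torus depend on this residue --- and of $\PGL(2,\bar q)$, recording for each its cardinality (identity, involutions, $p$-elements, split-torus and non-split-torus elements) and the value of $i(\sigma)$ found above. Summing gives the degree of the different $\Delta=\sum_{\sigma\ne\mathrm{id}}i(\sigma)$, which equals $15(q+1)+24\gamma+20\delta$ for ${\rm A}_5$ and the quantities displayed in \eqref{pgl2}--\eqref{pgl3} for $\PSL(2,\bar q)$ and $\PGL(2,\bar q)$; substituting into Riemann--Hurwitz and solving for $g(\cH_q/G)$ gives the asserted genera, and integrality of the answer serves as an internal check on the type assignments. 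The converse statement is then immediate: under the stated conditions on $q$ and $\bar q$ each of the three groups exists inside $H$ by \cite{CE} and Dickson's theorem, has no fixed point or triangle by the first step, and attains the corresponding value of $\bar g$ by this computation.

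The step I expect to be the main obstacle is the second one, and within it the classification of the $p$-elements and the torus elements of $\PGL(2,q)$ once they are viewed inside $\PGU(3,q)$: distinguishing type~{\rm (C)} from type~{\rm (D)} for a unipotent element (that is, $i(\sigma)=q+2$ versus $i(\sigma)=2$) and type~{\rm (B1)} from type~{\rm (B2)} for a torus element cannot be settled by abstract group theory and forces one into the explicit geometry of the Baer conic $\cC$ in $\PG(2,q^2)$ supplied by \cite{CE}. A secondary, purely combinatorial, difficulty is to keep the conjugacy-class data of $\PSL(2,\bar q)$ correct in the two residue classes of $\bar q$ modulo $4$ and, above all, to treat the exceptional isomorphisms ${\rm A}_5\cong\PSL(2,4)\cong\PSL(2,5)$ carefully, so that the ${\rm A}_5$ case and the $\PSL(2,\bar q)$ family are neither conflated nor double-counted.
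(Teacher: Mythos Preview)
Your overall plan is correct and runs parallel to the paper's proof: Dickson/Huppert to isolate $G\in\{{\rm A}_5,\PSL(2,\bar q),\PGL(2,\bar q)\}$ with $\bar q\ne 3$, classification of the element types, then Riemann--Hurwitz. Two of your choices genuinely differ from the paper. First, for ``no fixed point or triangle'' you argue via irreducibility of the $3$-dimensional module (symmetric square of the natural $\SL_2$-module), whereas the paper argues geometrically, using the $15$ involutions of ${\rm A}_5$ and the presence of $p$-elements of type~(D) and Klein four-groups in $\PSL(2,\bar q)$, $\PGL(2,\bar q)$. Your argument is cleaner once the irreducibility is checked in the relevant characteristics; the paper's is more elementary and self-contained. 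Second, for the $p$-elements you propose reading the unitary Jordan form from the model in \cite{CE}; the paper instead gives a short incidence argument on the conic $\cC$ (an elation with center $P\in\cC$ would force every secant to $\cC$ through $P$ to be tangent), concluding directly that all $p$-elements of $H$ are of type~(D). Both routes work; the paper's avoids any matrix computation.

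There is one genuine gap in your outline. For a non-involution $\sigma\in H$ with $\ord(\sigma)\mid q+1$ (the non-split torus, $h$ odd) you assert type~(B1), but Lemma~\ref{classificazione} and the divisibility alone do not separate (B1) from (A): both occur for orders dividing $q+1$. You need an extra argument here. The paper supplies a clean one: in $\PGL(2,q)$ the cyclic group $\langle\sigma\rangle$ sits inside a dihedral subgroup $\langle\sigma\rangle\rtimes C_2$ with the involution inverting $\sigma$; but any element normalizing a homology must preserve its center and axis, hence commute with it, so $\sigma$ cannot be of type~(A) (and (B3) is excluded since an involution cannot act on an $\mathbb{F}_{q^6}\setminus\mathbb{F}_{q^2}$-triangle). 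Alternatively, in your module-theoretic framework you can observe that the three eigenvalues of $\sigma$ on ${\rm Sym}^2$ are $a^2,1,a^{-2}$ and are pairwise distinct for $\ord(\sigma)>2$ in this embedding, forcing type~(B). Either way, this step must be added before the Riemann--Hurwitz bookkeeping is valid.
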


\begin{proof}
If $G$ has no fixed points or triangles, then the following holds: $G$ has no cyclic normal subgroups, by Remark \ref{rimarco}; $G$ has no elementary abelian normal $2$-subgroups of order $4$; $G$ has no normal $p$-subgroups.
Then by \cite[Hauptsatz 8.27]{Hup} $G=K_1\cong {\rm A}_5$ with $p=5$ or $5\mid(q^2-1)$, or $G=K_2\cong\PSL(2,\bar q)$, or $G=K_3\cong\PGL(2,\bar q)$, with $q=\bar{q}^h$.

Elements $\alpha$ of order $p$ in $H$ are of type (D)
In fact, suppose by contradiction that $\alpha$ is an elation. Then $\alpha$ fixes a point $P \in \cH_q \cap \cC$, where $\cC$ is the fixed conic, and its tangent line pointwise; also, it acts with long orbits on the remaining $q$ points belonging to each other line containing $P$. Since every of these lines intersects the conic $\cC$ in either $1$ or $2$ points, and this intersection is preserved by $\sigma$, each line must be tangent to $\cC$ at $P$, a contradiction.

Elements $\alpha$ of order dividing $q+1$ and different from $2$ are of type (B1).
In fact, $H$ contains a dihedral group $D=\langle \alpha \rangle \rtimes C_2$ and hence $\langle \alpha \rangle$ is normalized by an involution. If $\alpha$ is a homology, then $\alpha$ and the involution commute, a contradiction. Also, no elements of type (B3) are normalized by an involution.

Suppose that $G=K_1$. By the Riemann-Hurwitz formula and Theorem \ref{caratteri}, the genus of $\cH_q/G$ is given by Equation \eqref{pgl1}.
Suppose that $G=K_2$ or $G=K_3$. The order statistics of $G$ follows from the analysis of $\PSL(2,\bar q)$ and $\PGL(2,\bar q)$ in \cite[Chapter II.8]{Hup}. Together with the Riemann-Hurwitz formula and Theorem \ref{caratteri}, Equations \eqref{pgl2} and \eqref{pgl3} provide the genus of $\cH_q/K_2$ and $\cH_q/K_3$, respectively.

Finally, we show that $K_1$, $K_2$, and $K_3$ do not fix any point or triangle.
The group $K_1$ contains $15$ involutions which form $5$ elementary abelian $2$-groups $E_1,\ldots,E_5$ of order $4$, that intersect pairwise trivially.
Hence, the triangles fixes pointwise by $E_i$ and $E_j$ are disjoint for $i\ne j$. This implies that $K_1$ cannot fix any point $P\in\cH_q$ nor any self-polar triangle $T\subset\PG(2,q^2)\setminus\cH_q$; also, $K_1$ cannot fix any triangle $T\subset\cH_q(\mathbb{F}_{q^6})$ as $K_1$ has even order.
The groups $K_2$ and $K_3$ contain $p$-elements.
The groups $K_2$ and $K_3$ cannot fix a point $P\in\cH_q(\mathbb{F}_{q^2})$, since $K_2$ and $K_3$ contain elementary abelian $2$-subgroups of order $4$.
The groups $K_2$ and $K_3$ cannot fix a point $P\in\PG(2,q^2)\setminus\cH_q(\mathbb{F}_{q^2})$ as they contain $p$-elements of type (D).
The groups $K_2$ and $K_3$ cannot fix cannot fix a triangle $T\subset\cH_q(\mathbb{F}_{q^6})$ as they have even order.
The groups $K_2$ and $K_3$ cannot fix a triangle $T\subset\PG(2,q^2)\setminus\cH_q$. Otherwise, they contain an abelian subgroup of index dividing $6$, namely the pointwise stabilizer of $T$; since an abelian subgroup has order at most $q+1$, this yields $\bar q=3$.
\end{proof}

\subsection{$G$ is a subgroup of $\PSL(2,7)$ with $p=7$ or $\sqrt{-7}\notin\mathbb{F}_q$}

\begin{proposition} \label{psl27}
Let $q=p^n$ be a power of an odd prime $p$, where either $p=7$ or $\sqrt{-7}\notin\mathbb{F}_q$, and $H \leq \PGU(3,q)$ be isomorphic to $\PSL(2,7)$.
The the genus of the quotient curve $\cH_q/H$ is
$$\frac{q^2-22q+313-56\alpha-48\beta -42\gamma}{336},$$
where 
$$\alpha=\begin{cases} 0, \ if \ 3 \mid (q+1), \\ 2, \ otherwise; \end{cases} \beta=\begin{cases} 0, \ if \ 7 \mid (q+1), \\ 3, \ if \ 7 \mid (q^2-q+1), \\ 2, \ otherwise; \end{cases} \ and \ \gamma=\begin{cases} 0, \ if \ 4 \mid (q+1), \\ 2, \ otherwise. \end{cases}$$
Also, every proper subgroup of $H$ fixes a point or a triangle.
\end{proposition}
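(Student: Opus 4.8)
The plan is to evaluate the Riemann--Hurwitz formula $\Delta=(2g(\cH_q)-2)-|H|\,(2g(\cH_q/H)-2)$ for the Galois cover $\cH_q\to\cH_q/H$, with $2g(\cH_q)-2=q^2-q-2$, $|H|=168$ and $\Delta=\sum_{\sigma\in H\setminus\{1\}}i(\sigma)$; so the whole computation reduces, as in the proofs of Propositions \ref{propohessian} and \ref{propopgl}, to assigning to each nontrivial $\sigma\in H\cong\PSL(2,7)$ a type in the sense of Lemma \ref{classificazione} and reading $i(\sigma)$ off Theorem \ref{caratteri}. First I would recall the order statistics of $\PSL(2,7)$: besides the identity there are $21$ involutions, $56$ elements of order $3$ (each lying in a subgroup $S_3$), $42$ elements of order $4$ (each in a Sylow $2$-subgroup $D_8$), and $48$ elements of order $7$ (each with normalizer $C_7\rtimes C_3$). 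Since $p$ is odd, $2\mid(q+1)$, so every involution is a homology of type (A) with $i(\sigma)=q+1$ by Theorem \ref{caratteri}(1), contributing $21(q+1)$ to $\Delta$.

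Next I would treat the elements of order $3$ and $4$ exactly as in Proposition \ref{propopgl}. Such a $\sigma$ is inverted by an involution inside the ambient $S_3$ (resp.\ $D_8$). No type-(B3) element is normalized by an involution $\tau$: $\tau$ would permute the Frobenius-orbit of three points of $\cH_q(\mathbb{F}_{q^6})\setminus\cH_q(\fqs)$, hence either fix one of them --- impossible, as the fixed points of an involution of $\PGU(3,q)$ are $\fqs$-rational --- or lie in the pointwise stabiliser of that triangle, which is cyclic of odd order $q^2-q+1$. Moreover a homology of order $>2$ is centralised, never inverted, by any element normalizing it, since conjugation by such an element preserves the two eigenspaces of the homology and acts on the two-dimensional one by a map commuting with a scalar. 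Hence $\sigma$ is of type (B1), (B2) or (D), and Lemma \ref{classificazione} determines which by divisibility: type (B1) with $i(\sigma)=0$ if $\ord(\sigma)\mid(q+1)$; type (B2) with $i(\sigma)=2$ if $\ord(\sigma)\mid(q^2-1)$ and $\ord(\sigma)\nmid(q+1)$; type (D) with $i(\sigma)=2$ if $\ord(\sigma)=p$ (possible only on the order-$3$ class, when $p=3$). This yields $i(\sigma)=\alpha$ on the order-$3$ class and $i(\sigma)=\gamma$ on the order-$4$ class.

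The order-$7$ class is the one real obstacle, since there $\langle\sigma\rangle$ is normalized only by $3$-elements and the ``inverting involution'' argument no longer excludes type (A). If $7\nmid(q+1)$ this is harmless: a type-(A) element has order dividing $q+1$, so $\sigma$ is not a homology, and Lemma \ref{classificazione} forces type (B2) ($i=2$) if $7\mid(q-1)$, type (B3) ($i=3$) if $7\mid(q^2-q+1)$, and --- when $p=7$ --- type (D) ($i=2$), because in defining characteristic the $3$-dimensional module of $\PSL(2,7)$ is $\mathrm{Sym}^2$ of the $2$-dimensional natural module, on which a Sylow $7$-generator is a single Jordan block, i.e.\ a regular unipotent. (The hypothesis ``$p=7$ or $\sqrt{-7}\notin\mathbb{F}_q$'' rules out $q\equiv2,4\pmod 7$, where $7\nmid|\PGU(3,q)|$, so these subcases are exhaustive.) If $7\mid(q+1)$ one must genuinely exclude type (A): in the $3$-dimensional representation affording the embedding, a Sylow $7$-generator has the three \emph{distinct} eigenvalues $\zeta,\zeta^2,\zeta^4$ --- a conjugate set of primitive $7$th roots of unity whose sum is the Gaussian period $(-1+\sqrt{-7})/2$ --- so $\sigma$ is not a homology and is of type (B1) with $i(\sigma)=0$. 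Thus $i(\sigma)=\beta$ on the order-$7$ class, and substituting $\Delta=21(q+1)+56\alpha+48\beta+42\gamma$ into Riemann--Hurwitz and solving for $g(\cH_q/H)$ gives the stated genus.

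Finally, for the assertion on proper subgroups I would run through the subgroup lattice of $\PSL(2,7)$: up to isomorphism a proper subgroup is one of $1,\ C_2,\ C_3,\ C_4,\ V_4,\ S_3,\ C_7,\ D_8,\ A_4,\ C_7\rtimes C_3,\ S_4$. Each of these except $A_4$ and $S_4$ has a nontrivial cyclic normal subgroup ($C_2,C_3,C_4,C_7$ themselves; $C_3\triangleleft S_3$; $C_4\triangleleft D_8$; $C_7\triangleleft C_7\rtimes C_3$; $C_2\triangleleft V_4$), hence fixes a point or a triangle by Remark \ref{rimarco}. For $A_4$ and $S_4$ I would use that each contains a normal Klein four-subgroup $V_4$ whose three involutions are type-(A) homologies; up to conjugation such a $V_4$ is $\{\diag(\epsilon_1,\epsilon_2,\epsilon_3):\epsilon_i=\pm1,\ \epsilon_1\epsilon_2\epsilon_3=1\}$, whose fixed locus in $\PG(2,q^2)$ is exactly the fundamental triangle, so $A_4$ and $S_4$ --- which normalize $V_4$ --- stabilise that triangle. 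The main difficulty throughout is the order-$7$ class in the genus computation: everything else is routine type-checking together with Riemann--Hurwitz bookkeeping, whereas separating type (A) from type (B1) when $7\mid(q+1)$ genuinely requires the eigenvalue data of the $3$-dimensional representation of $\PSL(2,7)$.
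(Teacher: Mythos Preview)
Your argument is correct, and for the order-$2$, order-$3$, order-$4$ classes and for the subgroup lattice it runs parallel to the paper (the paper is in fact terser there: it only records the dihedral argument for $3\mid(q+1)$ and $4\mid(q+1)$ and then invokes Theorem \ref{caratteri}). The genuine divergence is the order-$7$ class when $p=7$. The paper proves that these elements are of type {\rm(D)} by an elementary but rather elaborate incidence argument: it assumes by contradiction that $\sigma$ is an elation, observes that $\sigma$ then moves the $21$ centres of the involutions of $H$ in three long orbits lying on three lines through the centre $P$ of $\sigma$, and analyses how the self-polar triangles fixed by the Klein four-subgroups of $H$ can sit on those three lines; the two possible configurations are eliminated using, respectively, that $7\nmid|N_{\PGU(3,q)}(E)|$ and that the third vertex of a self-polar triangle is determined by the other two. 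Your route---the irreducible $3$-dimensional module of $\PSL(2,7)$ in characteristic $7$ is $\mathrm{Sym}^2$ of the natural module, so a Sylow $7$-generator is a regular unipotent---is much shorter and more conceptual, but trades the paper's self-contained projective geometry for a modular-representation-theoretic input (uniqueness of the $3$-dimensional irreducible in defining characteristic, together with irreducibility of the embedding, which does follow from $H$ fixing no point or line). Likewise, for $7\mid(q+1)$ the paper would argue (as it does in Proposition \ref{a7gen}) via the normalizer $C_7\rtimes C_3$, whereas your eigenvalue computation $\{\zeta,\zeta^2,\zeta^4\}$ is an explicit character-theoretic alternative. One small omission: when $p=3$ your ``inverted by an involution'' argument should also exclude type {\rm(C)} for the order-$3$ elements; this follows because an involution normalizing an elation necessarily centralises it (the paper records this in the proof of Lemma \ref{noneraS6}), so inversion is impossible for elations just as for homologies.
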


\begin{proof}
The group $H$ contains $21$ elements of order $2$, $56$ elements of order $3$, $48$ elements of order $7$, and $42$ elements of order $4$.

\textbf{Case $p=7$.} We prove that elements of order $7$ are of type (D); note that all elements $\sigma\in H$ of order $7$ are of the same type, as they form a unique conjugacy class in $H$.
Assume by contradiction that $\sigma$ is of type (C).
Since $\sigma$ does not commute with any involution, $\sigma$ acts with long orbits of length $7$ on the set of the $21$ centers $P_1,\ldots,P_{21}$ of the involutions of $H$.
Hence, $P_1,\ldots,P_{21}$ are on three lines $\ell_1,\ell_2,\ell_3$ containing the center $P$ of $\sigma$, as represented in the following picture.

\begin{tikzpicture}[scale=0.75]
 \draw (2,4.5) -- (8,4.5);
 \draw (5,4.5) -- (5,0);
 \draw (5,4.5) -- (3,0);
 \draw (5,4.5) -- (7,0);
\draw (5,4.5) circle[radius=2pt];
\draw (5,3) circle[radius=2pt];
\draw (5,2.5) circle[radius=2pt];
\draw (5,2) circle[radius=2pt];
\draw (5,1.5) circle[radius=2pt];
\draw (5,1) circle[radius=2pt];
\draw (5,0.5) circle[radius=2pt];
\draw (5,0) circle[radius=2pt];  

\draw (4.32,3) circle[radius=2pt];
\draw (4.1,2.5) circle[radius=2pt];
\draw (3.88,2) circle[radius=2pt];
\draw (3.66,1.5) circle[radius=2pt];
\draw (3.44,1) circle[radius=2pt];
\draw (3.22,0.5) circle[radius=2pt];
\draw (3,0) circle[radius=2pt];  

\draw (5.68,3) circle[radius=2pt];
\draw (5.9,2.5) circle[radius=2pt];
\draw (6.12,2) circle[radius=2pt];
\draw (6.34,1.5) circle[radius=2pt];
\draw (6.56,1) circle[radius=2pt];
\draw (6.78,0.5) circle[radius=2pt];
\draw (7,0) circle[radius=2pt];  

 \foreach \Point/\PointLabel in {(5,4.5)/P}
        \draw[fill=black] \Point circle (0.05) node[above] {$P$};
 
\foreach \Point/\PointLabel in {(8,4.5)/P}
        \draw[fill=black] \Point node[above] {$\ell_P$};

\foreach \Point/\PointLabel in {(7,0)/P}
        \draw[fill=black] \Point node[below right] {$\ell_3$};

\foreach \Point/\PointLabel in {(5,0)/P}
        \draw[fill=black] \Point node[below right] {$\ell_2$};

\foreach \Point/\PointLabel in {(3,0)/P}
        \draw[fill=black] \Point node[below right] {$\ell_1$};

\end{tikzpicture}

Consider an elementary abelian $2$-subgroup $E$ of order $4$ in $H$, which fixes pointwise a self-polar triangle $T=\{P_1,P_2,P_3\}$ with $P_i \in \ell_1 \cup \ell_2 \cup \ell_3$. Thus, one the following cases occurs, up to relabeling:
\begin{enumerate}
\item $P_1,P_2\in \ell_3$ and $P_3 \in \ell_2$, or
\item $P_i \in \ell_i$ for $i=1,2,3$. 
\end{enumerate}

\begin{tikzpicture}
 \draw (3,4.5) -- (7,4.5);
\draw (9,4.5) -- (13,4.5);
 \draw (5,4.5) -- (5,0);
 \draw (5,4.5) -- (3,0);
 \draw (5,4.5) -- (7,0);
\draw (5,4.5) circle[radius=2pt];
\draw (5,3) circle[radius=2pt];
\draw (5,2.5) circle[radius=2pt];
\draw (5,2) circle[radius=2pt];
\draw (5,1.5) circle[radius=2pt];
\draw (5,1) circle[radius=2pt];
\draw (5,0.5) circle[radius=2pt];
\draw (5,0) circle[radius=2pt];  

\draw (4.32,3) circle[radius=2pt];
\draw (4.1,2.5) circle[radius=2pt];
\draw (3.88,2) circle[radius=2pt];
\draw (3.66,1.5) circle[radius=2pt];
\draw (3.44,1) circle[radius=2pt];
\draw (3.22,0.5) circle[radius=2pt];
\draw (3,0) circle[radius=2pt];  

\draw (5.68,3) circle[radius=2pt];
\draw (5.9,2.5) circle[radius=2pt];
\draw (6.12,2) circle[radius=2pt];
\draw (6.34,1.5) circle[radius=2pt];
\draw (6.56,1) circle[radius=2pt];
\draw (6.78,0.5) circle[radius=2pt];
\draw (7,0) circle[radius=2pt];  

 \foreach \Point/\PointLabel in {(5,4.5)/P}
        \draw[fill=black] \Point circle (0.05) node[above] {$P$};
 
\foreach \Point/\PointLabel in {(7,4.5)/P}
        \draw[fill=black] \Point node[above] {$\ell_P$};

\foreach \Point/\PointLabel in {(7,0)/P}
        \draw[fill=black] \Point node[below right] {$\ell_3$};

\foreach \Point/\PointLabel in {(5,0)/P}
        \draw[fill=black] \Point node[below right] {$\ell_2$};

\foreach \Point/\PointLabel in {(3,0)/P}
        \draw[fill=black] \Point node[below right] {$\ell_1$};

 \draw (11,4.5) -- (11,0);
 \draw (11,4.5) -- (9,0);
 \draw (11,4.5) -- (13,0);
\draw (11,4.5) circle[radius=2pt];
\draw (11,3) circle[radius=2pt];
\draw (11,2.5) circle[radius=2pt];
\draw (11,2) circle[radius=2pt];
\draw (11,1.5) circle[radius=2pt];
\draw (11,1) circle[radius=2pt];
\draw (11,0.5) circle[radius=2pt];
\draw (11,0) circle[radius=2pt];  

\draw (10.32,3) circle[radius=2pt];
\draw (10.1,2.5) circle[radius=2pt];
\draw (9.88,2) circle[radius=2pt];
\draw (9.66,1.5) circle[radius=2pt];
\draw (9.44,1) circle[radius=2pt];
\draw (9.22,0.5) circle[radius=2pt];
\draw (9,0) circle[radius=2pt];  

\draw (11.68,3) circle[radius=2pt];
\draw (11.9,2.5) circle[radius=2pt];
\draw (12.12,2) circle[radius=2pt];
\draw (12.34,1.5) circle[radius=2pt];
\draw (12.56,1) circle[radius=2pt];
\draw (12.78,0.5) circle[radius=2pt];
\draw (13,0) circle[radius=2pt];  

 \foreach \Point/\PointLabel in {(11,4.5)/P}
        \draw[fill=black] \Point circle (0.05) node[above] {$P$};
 
\foreach \Point/\PointLabel in {(13,4.5)/P}
        \draw[fill=black] \Point node[above] {$\ell_P$};

\foreach \Point/\PointLabel in {(13,0)/P}
        \draw[fill=black] \Point node[below right] {$\ell_3$};

\foreach \Point/\PointLabel in {(11,0)/P}
        \draw[fill=black] \Point node[below right] {$\ell_2$};

\foreach \Point/\PointLabel in {(9,0)/P}
        \draw[fill=black] \Point node[below right] {$\ell_1$};

\foreach \Point/\PointLabel in {(9,0)/P}
        \draw[fill=black] \Point node[below right] {$\ell_1$};

\foreach \Point/\PointLabel in {(4.2,-1)/P}
        \draw[fill=black] \Point node[below right] {Case (1)};

\foreach \Point/\PointLabel in {(10.3,-1)/P}
        \draw[fill=black] \Point node[below right] {Case (2)};

\draw (11.68,3) node[anchor=north]{}
  -- (11.9,2.5) node[anchor=north]{}
  -- (11,3) node[anchor=south]{}
  -- cycle;
\foreach \Point/\PointLabel in {(11.68,3)/P}
        \draw[fill=black] \Point node[above right] {$P_1$};
\foreach \Point/\PointLabel in {(11.9,2.5)/P}
        \draw[fill=black] \Point node[above right] {$P_2$};
\foreach \Point/\PointLabel in {(11,3)/P}
        \draw[fill=black] \Point node[above left] {$P_3$};

\draw (5.68,3) node[anchor=north]{}
  -- (4.1,2.5) node[anchor=north]{}
  -- (5,3) node[anchor=south]{}
  -- cycle;

\foreach \Point/\PointLabel in {(4.1,2.5)/P}
        \draw[fill=black] \Point node[above left] {$P_1$};
\foreach \Point/\PointLabel in {(5,3)/P}
        \draw[fill=black] \Point node[above right] {$P_2$};
\foreach \Point/\PointLabel in {(5.68,3)/P}
        \draw[fill=black] \Point node[above right] {$P_3$};
\end{tikzpicture}

Suppose that Case (2) occurs. Then there exists $k$ such that $\sigma^k(P_1)=P_2$. This implies that $\sigma^k(T)=T$ and hence $\sigma^k$ normalizes $E$, a contradiction to $7\nmid|N_{\PGU(3,q)}(E)|$.
Hence Case (1) occurs. Since $H$ contains $14$ elementary abelian $2$-subgroups of order $4$ and just $21$ involutions, there exists a self-polar triangle $T^\prime=\{P^\prime_1,P^\prime_2,P^\prime_3\}\ne T$ fixed by an elementary abelian $2$-subgroup $E^{\prime}\ne E$ such that $|E\cap E^{\prime}|=2$. Hence two vertices of $T$ and $T^\prime$ coincide, say $P_1=P_1^\prime$ and $P_2=P_2^\prime$, while $P_3 \ne P_3^\prime$ as $E\ne E^{\prime}$. This is a contradiction, because $P_3$ is uniquely determined by its polar line $P_2 P_3$.
 This shows that elements of order $7$ in $H$ are not elations, and hence are of type (D).
The claim follows by the Riemann-Hurwitz formula and Theorem \ref{caratteri}.

\textbf{Case $p \ne 7$ and $\sqrt{-7}\notin\mathbb{F}_q$.} The condition $\sqrt{-7}\notin\mathbb{F}_{q}$ implies that $p \equiv 2,5,6 \pmod 7$ and $n$ is odd.
Note that $H$ contains dihedral groups of order $6$ and $8$, proving that if $3 \mid (q+1)$ (resp. $4 \mid (q+1)$) then an element of order $3$ (resp. of order $4$) is of type (B1). 
The claim follows by the Riemann-Hurwitz formula and Theorem \ref{caratteri}.

If $K$ is a proper subgroup of $H$, then either $K$ contains a cyclic normal subgroup of order $3$ or $7$, and hence $K$ fixes a point or a triangle by Remark \ref{rimarco}; or $K$ contains a elementary abelian normal $2$-subgroup of order $4$, and hence $K$ fixes a self-polar triangle.
\end{proof}

\begin{remark}
All the congruences of $p$ modulo $4$ and $3$ listed in Proposition {\rm \ref{psl27}} can occur. Table {\rm \ref{table1}} provides a list of examples.
\end{remark}
\begin{center}
\begin{table}[H]
\begin{small}
\caption{Possible values for $q$ in Proposition \ref{psl27}} \label{table1}
\begin{tabular}{|c|c|c|c|}
\hline $q \equiv_4$ & $q \equiv_3$ & $q \equiv_7$ & Example for $q$ \\
\hline 3 & 1 & 6 & 139\\
\hline 3 & 1 & 3 & 31\\
\hline 3 & 1 & 5 & 19\\
\hline 3 & 2 & 6 & 84\\
\hline 3 & 2 & 3 & 59\\
\hline 3 & 2 & 5 & 47\\
\hline 1 & 1 & 6 & 13\\
\hline 1 & 1 & 3 & 73\\
\hline 1 & 1 & 5 & 61\\
\hline 1 & 2 & 6 & 41\\
\hline 1 & 2 & 3 & 17\\
\hline 1 & 2 & 5 & 5\\
\hline
\end{tabular}
\end{small}
\end{table}
\end{center}

\subsection{$G$ is a subgroup of $SmallGroup(720,765)$, when $q$ is an odd power of $5$}

\begin{proposition} \label{gen720}
Let $q=5^n$ be an odd power of $p=5$ and $H$ be a subgroup of $\PGU(3,q)$ $H \cong {\rm SmallGroup(}$720$,$765${\rm )}$.
If $G$ is a subgroup of $H$ such that $G$ has no fixed points or triangles in $\PG(2,q^6)$, then the genus of $\cH_q/G$ is one of the following:
\begin{equation}\label{720}
\frac{q^2-10q+25}{72},\quad \frac{q^2-16q+55}{120}, \quad \frac{q^2-10q+25}{144},\quad \frac{q^2-46q+205}{720}, \quad \frac{q^2-46q+205}{1440}.
\end{equation}
where $G\cong SmallGroup(36,9)$, $G\cong{\rm A}_5$, $G\cong\PSU(3,2)$, $G\cong{\rm A}_6$, $G=H$, respectively.
Conversely, if $\bar g$ is one of the integers in Equation \eqref{720}, then there exists a subgroup $G$ of $H$ such that $G$ has no fixed points or triangles in $\PG(2,q^6)$ and $g(\cH_q/G)=\bar g$.
\end{proposition}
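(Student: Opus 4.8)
The plan is to follow the scheme of the proofs of Propositions~\ref{propohessian}, \ref{propopgl} and \ref{psl27}: first reduce the list of subgroups $G$ that must be examined, then determine the type of each nontrivial element of $G$ in the sense of Lemma~\ref{classificazione}, then read off $g(\cH_q/G)$ from the Riemann--Hurwitz formula combined with Theorem~\ref{caratteri}, and finally check that the five groups in the statement fix no point and no triangle, which together with the genus computations also yields the converse assertion.

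\emph{Reduction to five subgroups.} Since $q=5^{n}$ with $n$ odd, we have $3\mid(q+1)$, $q\equiv1\pmod 4$ (so $4\mid(q-1)$ and $4\nmid(q+1)$), $3\mid(q^{2}-q+1)$, and $8\mid(q^{2}-1)$ with $8\nmid(q\pm1)$. By Remark~\ref{rimarco} a subgroup with a cyclic normal subgroup fixes a point or a triangle; moreover, since every involution of $\PGU(3,q)$ is a homology (Theorem~\ref{caratteri}), a normal Klein four-subgroup $V$ of $G$ consists of three pairwise commuting homologies, which fix pointwise a common self-polar triangle, and this triangle is unique because the pointwise stabiliser of a self-polar triangle is $C_{q+1}\times C_{q+1}$ and $4\nmid(q+1)$; hence $G$ fixes that triangle. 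Discarding from $H\cong{\rm SmallGroup}(720,765)$ all subgroups having a cyclic normal subgroup or a normal Klein four-subgroup leaves, up to conjugacy, exactly $H$ itself, a subgroup isomorphic to $A_{6}$, one isomorphic to $\PSU(3,2)$, one isomorphic to $A_{5}$, and one isomorphic to ${\rm SmallGroup}(36,9)$; this is checked directly in GAP or MAGMA, which also returns their order statistics (apart from the identity: ${\rm SmallGroup}(36,9)$ has $9,8,18$ elements of orders $2,3,4$; $A_{5}$ has $15,20,24$ elements of orders $2,3,5$; $\PSU(3,2)$ has $9,8,54$ elements of orders $2,3,4$; $A_{6}$ has $45,80,90,144$ elements of orders $2,3,4,5$; $H$ has $45,80,270,144,180$ elements of orders $2,3,4,5,8$).

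\emph{Types of elements.} By Lemma~\ref{classificazione} and Theorem~\ref{caratteri}: every involution is of type~(A) with $i(\sigma)=q+1$; every element of order $4$ or $8$ has order dividing $q^{2}-1$ but not $q+1$, hence is of type~(B2) with $i(\sigma)=2$; every element of order $5=p$ lies in a subgroup isomorphic to $A_{5}$ and is of type~(C) or~(D), and it must be of type~(D) with $i(\sigma)=2$, because an elation of order $5$ would act in orbits of length $5$ on the $15$ centres of the involutions of that $A_{5}$, forcing these centres onto at most three lines through the centre of $\sigma$, and then the self-polar triangles fixed by the Klein four-subgroups of $A_{5}$ give a contradiction by the argument used for $p=7$ in the proof of Proposition~\ref{psl27}; finally every element of order $3$ is of type~(B1) with $i(\sigma)=0$, since each of the five groups contains a dihedral group $D=\langle\sigma\rangle\rtimes C_{2}$ of order $6$, so $\langle\sigma\rangle$ is normalised by an involution, while a homology normalised by an involution would commute with it (a contradiction) and no element of type~(B3) is normalised by an involution.

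\emph{Genera and absence of fixed points.} For each of the five groups $G$, substituting the above types and the corresponding values $i(\sigma)$ of Theorem~\ref{caratteri} into $2g(\cH_q)-2=|G|\bigl(2g(\cH_q/G)-2\bigr)+\sum_{\sigma\in G\setminus\{id\}}i(\sigma)$ and solving for $g(\cH_q/G)$ yields, in the stated correspondence, the five values in \eqref{720}, each of which is a non-negative integer. To conclude it suffices to check that ${\rm SmallGroup}(36,9)$ fixes no point and no triangle, since it is contained in each of the other four groups; this is done exactly as in the last paragraph of the proof of Proposition~\ref{propohessian}: it contains an element of type~(B1), so it fixes no point of $\cH_q$; it contains a $C_{3}\times C_{3}$ whose nontrivial elements are of type~(B1), so a generator acts without fixed points on the triangle fixed by another generator and no point of $\PG(2,q^{2})\setminus\cH_q$ is fixed; it has even order while the stabiliser of a Frobenius-invariant triangle has odd order $3(q^{2}-q+1)$; and if it fixed a self-polar triangle $T$, its $18$ elements of order $4$ would each fix one vertex of $T$ and swap the other two, so their squares, the $9$ involutions, would fix $T$ pointwise, contradicting that exactly $3$ elements of $\PGU(3,q)$ fix $T$ pointwise.

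\emph{Main difficulty.} The crux is the classification of element types, specifically ruling out that an order-$5$ element is an elation of type~(C) and that an order-$3$ element is of type~(B3). These are precisely the points that require the geometric fixed-point and configuration arguments developed in the earlier propositions, and each wrong choice of type alters the contribution $i(\sigma)$ and hence the resulting genus.
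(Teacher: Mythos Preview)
Your overall scheme---reduce to five subgroups, classify the element types, apply Riemann--Hurwitz, and verify non-fixation---is the paper's, and your order statistics and genus computations are correct. Two steps, however, do not go through as written.

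First, ${\rm SmallGroup}(36,9)$ is \emph{not} a subgroup of $A_5$: its order $36$ does not divide $60$. Hence your final paragraph does not establish that $A_5$ fixes no point or triangle. The paper handles $A_5$ by reference to Proposition~\ref{propopgl}: the five Klein four-subgroups of $A_5$ intersect pairwise trivially, so they fix five pairwise disjoint self-polar triangles; this rules out a common fixed self-polar triangle and a common fixed point of $\PG(2,q^2)\setminus\cH_q$. Even order rules out a fixed $\mathbb F_{q^6}$-triangle, and the presence of a Klein four rules out a fixed point on $\cH_q$. (For $\PSU(3,2)$, $A_6$ and $H$ your containment claim is correct and the reduction to ${\rm SmallGroup}(36,9)$ is fine.)

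Second, your argument that the order-$5$ elements are of type~(D) invokes the configuration argument of Proposition~\ref{psl27}, but the decisive step there does not transfer. In $\PSL(2,7)$ one disposes of the case ``one vertex of $T$ on each line'' by using that there are $14$ Klein four-subgroups against only $21$ involutions, so two self-polar triangles are forced to share a vertex and hence an edge; in $A_5$ there are exactly $5$ Klein fours and $15$ involutions, so the triangles are vertex-disjoint and that contradiction does not arise. The paper's argument is shorter and avoids this difficulty: every element $\sigma$ of order $5$ in $H$ lies in a dihedral group $\langle\sigma\rangle\rtimes\langle\tau\rangle$ of order $10$ (already visible inside $A_6\leq H$), and an involution normalising an elation must commute with it (as in Lemma~\ref{noneraS6}: $\tau$ fixes the centre $P\in\cH_q$ of $\sigma$, so the centre of $\tau$ lies on the tangent at $P$, i.e.\ on the axis of $\sigma$; both lie in the stabiliser $M_2$ of that centre, where $\tau$ is central). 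This contradicts $\tau\sigma\tau^{-1}=\sigma^{-1}$. It is precisely the mechanism you already used for order $3$, with ``elation'' replacing ``homology''.
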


\begin{proof}
By direct checking, either $G\cong SmallGroup(36,9)$, $G\cong{\rm A}_5$, $G\cong\PSU(3,2)$, $G\cong{\rm A}_6$, $G=H$; or $G$ contains a normal subgroup which is cyclic or elementary abelian of order $4$, and hence $G$ fixes a point or a triangle.
If $G\cong SmallGroup(36,9)$ or $G\cong\PSU(3,2)$, or $G\cong{\rm A}_5$, then $G$ has no fixed points or triangles and the genus of $\cH_q/G$ is computed in Propositions \ref{propohessian} or \ref{propopgl}, respectively.

Elements $\sigma$ of order $3$ are of type (B1); in fact, $H$ contains dihedral subgroups of order $6$ containing $\sigma$, implying that $\sigma$ cannot be neither of type (B3) nor of type (A).
Elements of order $5$ are of type (D), because they are contained in dihedral groups of order $10$.
The genus of $\cH_q/G$ for $G\cong{\rm A}_6$ and $G=H$ can be computed by the Riemann-Hurwitz formula and Theorem \ref{caratteri}.

Finally we note that, if $G=H$ or $G\cong{\rm A}_6$, then $G$ does not fix any point or triangle, because $G$ contains a subgroup isomorphic to ${\rm A}_5$ which fixes no points or triangles.
\end{proof}

\subsection{$G$ is a subgroup of ${\rm A}_6$, when $q$ is an even power of $p=3$, or $5$ is a square in $\mathbb{F}_q$ but $\mathbb{F}_q$ contains no primitive cube roots of unity}

\begin{proposition} \label{a6}
Let $q=p^n$ be a power of an odd prime $p$, where either $p=3$ and $n$ is even, or $\sqrt{5}\in\mathbb{F}_q$ and $\mathbb{F}_q$ contains no primitive cube roots of unity. Let $H$ be a maximal subgroup of $\PSU(3,q)$ isomorphic to the alternating group ${\rm A}_6$.
If $G$ is a subgroup of $H$ such that $G$ has no fixed points or triangles in $\PG(2,q^6)$, then the genus of $\cH_q/G$ is one of the following:
\begin{equation}\label{a6tutto}
\frac{q^2-46q+673-80\alpha-90\beta-144\gamma}{720},\quad \frac{q^2-16q+103-20\alpha-24\gamma}{120},
\end{equation}
\begin{equation}\label{bastardo}
\frac{q^2-10q+61-18\beta}{72},
\end{equation}
where
$$\alpha=\begin{cases} 2, \ if \ p=3, \\ 0, \ otherwise; \end{cases} \ \beta= \begin{cases} 2, \ if \ p=3 \ or \ q \equiv 1 \pmod 4, \\ 0, \ otherwise; \end{cases} \  \gamma=\begin{cases} 0, \ if \ 5 \mid (q+1), \\ 2, \ otherwise,\end{cases}$$
and $G\cong{\rm A}_6$, $G\cong{\rm A}_5$, $G\cong SmallGroup(36,9)$, respectively.
Conversely, if $\bar g$ is one of the integers in Equation \eqref{a6tutto}, or $p\ne3$ and $\bar g$ is the integer in Equation \eqref{bastardo}, then there exists a subgroup $G$ of $H$ such that $G$ has no fixed points or triangles and $g(\cH_q/G)=\bar g$.
\end{proposition}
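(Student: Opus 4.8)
The plan is to follow the scheme of Propositions \ref{propohessian}, \ref{propopgl} and \ref{gen720}: first restrict the subgroups $G\le H\cong{\rm A}_6$ that can fail to fix a point or a triangle, then determine the $\PGU(3,q)$-type (in the sense of Lemma \ref{classificazione}) of every nontrivial element of $H$, and finally combine Theorem \ref{caratteri} with the Riemann--Hurwitz formula. By Remark \ref{rimarco} a subgroup with a cyclic normal subgroup fixes a point or a triangle, and a subgroup with a normal Klein four-group $V$ stabilises the self-polar triangle whose vertices are the centres of the three (type {\rm (A)}) involutions of $V$. Running through the conjugacy classes of subgroups of ${\rm A}_6$, every proper subgroup not isomorphic to ${\rm A}_5$ or to $SmallGroup(36,9)$ has one of these two features; since ${\rm A}_5$ is simple and $SmallGroup(36,9)$ has neither a cyclic normal subgroup nor a normal Klein four-group, the only possibilities left are $G\cong{\rm A}_6$, $G\cong{\rm A}_5$ and $G\cong SmallGroup(36,9)$. (We may also assume $p\ne5$, for otherwise $H$ would not be maximal by Theorem \ref{MaximalSubgroupsNotFixingPSU}.)

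The decisive step is the determination of the element types, which I would split according to whether $p=3$. If $p=3$ then $n$ is even, $q=9^h$, and $H\cong\PSL(2,9)$ sits inside the conic-preserving $\PGL(2,q)\le\PGU(3,q)$; hence every $G\le H$ without fixed points or triangles also lies in $\PGL(2,q)$ and the genus of $\cH_q/G$ is already given by Proposition \ref{propopgl} (the value for $\PSL(2,9)$ being that of Equation \eqref{pgl2} with $\bar q=9$, which one checks equals the first value of Equation \eqref{a6tutto}, while the ${\rm A}_5$ value of Equation \eqref{pgl1} with $\delta=2$ is the second value of Equation \eqref{a6tutto}). Here the only delicate point---that the elements of order $3$ are of type {\rm (D)} and not elations---is exactly the geometric argument on the conic carried out in the proof of Proposition \ref{propopgl}; and $SmallGroup(36,9)$ does not occur because, containing the $p$-group $C_3\times C_3$, it fixes a point of the conic, which lies on $\cH_q$.

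If $p\ne3$, the hypothesis forces $3\mid(q+1)$, so by Lemma \ref{classificazione} a nontrivial element of $H$ of order $3$, $4$ or $5$ is of type {\rm (A)}, {\rm (B1)}, {\rm (B2)} or {\rm (B3)}, and the types {\rm (B2)}, {\rm (B3)} are ruled out in all but the obvious cases by the divisibility conditions on $q$. In ${\rm A}_6$ every element of order $3$, $4$ or $5$ lies in a dihedral subgroup of order $6$, $8$ or $10$, hence is inverted by an involution; consequently such an element is not of type {\rm (B3)}---the inverting involution would fix a vertex of the Frobenius triangle fixed by it, impossible since an involution is a homology of type {\rm (A)} and fixes no point of $\cH_q$ outside $\cH_q(\mathbb{F}_{q^2})$---and is not a homology of type {\rm (A)} of order $>2$, because, as in Proposition \ref{propopgl}, an involution normalising the cyclic group of all homologies with a fixed centre and axis must centralise it. Thus the elements of order $3$ are of type {\rm (B1)} with $i(\sigma)=0$, the elements of order $4$ are of type {\rm (B1)} or {\rm (B2)} according as $4\mid(q+1)$ or $4\mid(q-1)$, and likewise for order $5$ with $5$ in place of $4$; the involutions are of type {\rm (A)} with $i(\sigma)=q+1$. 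Feeding the order statistics of ${\rm A}_6$ ($45$ involutions and $80$, $90$, $144$ elements of orders $3$, $4$, $5$) and of its subgroups ${\rm A}_5$ and $SmallGroup(36,9)$ into the Riemann--Hurwitz formula, together with Theorem \ref{caratteri}, yields the three values of Equations \eqref{a6tutto}--\eqref{bastardo}; for ${\rm A}_5$ and $SmallGroup(36,9)$ these coincide with the values obtained in Proposition \ref{propopgl} (Equation \eqref{pgl1}) and in the $SmallGroup(36,9)$ case of Proposition \ref{propohessian} (third value of Equation \eqref{hessian1}). Finally ${\rm A}_6$ and ${\rm A}_5$, being simple, fix no point or triangle, since the stabiliser of a point or of a triangle in $\PGU(3,q)$ is solvable, while for $SmallGroup(36,9)$ one repeats the argument in the last paragraph of the proof of Proposition \ref{propohessian}; the converse is then immediate, realising the three genera by $G=H$, by an ${\rm A}_5\le H$, and by a $SmallGroup(36,9)\le H$ respectively.

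I expect the type analysis for the elements of orders $3$, $4$ and $5$ to be the main obstacle, above all the assertion that the elements of order $3$ are of type {\rm (B1)} and not {\rm (A)}. The tempting shortcut---placing the Sylow $3$-subgroup $C_3\times C_3$ of ${\rm A}_6$ inside a maximal torus $C_{q+1}\times C_{q+1}$, which would make six of its nontrivial elements homologies---fails: this $C_3\times C_3$ is the \emph{non-toral} elementary abelian $3$-subgroup of $\PGU(3,q)$, and all eight of its nontrivial elements turn out to be of type {\rm (B1)}, in agreement with the dihedral argument. One must therefore rely on the intrinsic structure of ${\rm A}_6$ (its dihedral subgroups, and the impossibility of inverting a homology of order $>2$ by an involution) rather than on torus considerations.
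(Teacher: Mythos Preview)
Your overall scheme and the dihedral argument for $p\ne3$ match the paper's proof. There are, however, two genuine gaps.

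For $p=3$ you claim that the maximal $H\cong\PSL(2,9)$ sits inside the conic-preserving $\PGL(2,q)$. This cannot hold: by Theorem~\ref{MaximalSubgroupsNotFixingPSU}(ii), $\PGL(2,q)$ is itself a proper maximal subgroup of $\PSU(3,q)$, so a maximal $H$ cannot be contained in it. (When $p=3$ and $n$ is even there are two non-conjugate copies of $A_6$: the maximal one, and the $\PSL(2,9)<\PGL(2,9)\le\PGL(2,q)$; you are conflating them.) The paper sidesteps this by applying the dihedral argument uniformly. For $p=3$ it runs as follows: an order-$3$ element $\sigma$ in an $S_3$ is inverted by an involution $\tau$; if $\sigma$ were an elation with centre $P\in\cH_q$, then $\tau$ fixes $P$, hence $\tau$ is (up to conjugacy in the stabiliser of $P$) the diagonal element $\mu_{-1}$ in norm--trace coordinates; but $\mu_{-1}$ acts on the elation group $Z$ by $c\mapsto(-1)^{q+1}c=c$, so $\tau$ centralises $\sigma$ rather than inverting it. Thus $\sigma$ is of type~(D), and no conic is needed. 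The same normal-Sylow argument you give shows that $SmallGroup(36,9)$ fixes a point of $\cH_q$ when $p=3$, again without reference to a conic.

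Your converse argument asserts that the stabiliser of a point or triangle is solvable. This is false for $M_2$, the stabiliser of a point off $\cH_q$, which contains $\SL(2,q)$. The paper instead observes (for $p\ne3$) that $A_6$ contains $SmallGroup(36,9)$, so Proposition~\ref{propohessian} applies; and for $p=3$ it uses the five pairwise-disjoint self-polar triangles attached to the Klein four-groups of $A_5$, as in Proposition~\ref{propopgl}. Note that this Klein-four argument is also what one needs for $A_5$ in the $p\ne3$ case, since $36\nmid 60$ and $A_5$ does not contain $SmallGroup(36,9)$; the paper's sentence is imprecise on this point.
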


\begin{proof}
If $G$ has a normal subgroup which is either cyclic or of order $4$, then $G$ fixes a point or a triangle. Hence, we can assume that $G\cong{\rm A}_6$, or $G\cong{\rm A}_5$, or $G\cong SmallGroup(36,9)$.
Also, if $p=3$ and $G\cong SmallGroup(36,9)$, then $G$ fixes a point $P\in\cH_q(\mathbb{F}_{q^2})$ because $G$ has a normal Sylow $3$-subgroup.

Elements of order $3$ are either of type (D) or of type (B1), according to $p=3$ or $p\ne3$, because they are contained in dihedral subgroups of order $6$.
If $p=5$ or $5\mid(q+1)$, then elements of order $5$ are of type (D) or (B1), respectively, because they are contained in dihedral subgroups of order $10$.
If $4\mid(q+1)$, then elements of of order $4$ are of type (B1), because they are contained in dihedral subgroups of order $8$.
Then Equation \eqref{a6tutto} follows from the Riemann-Hurwitz formula together with Theorem \ref{caratteri}.

If $p\ne3$ and $G\cong SmallGroup(36,9)$, then by Proposition \ref{propohessian} $G$ fixes no points nor triangles; if $p\ne3$ and $G\cong{\rm A}_5$ or $G\cong{\rm A}_6$, then $G$ fixes no points or triangles because $G$ contains $SmallGroup(36,9)$.
If $p=3$ and $G\cong{\rm A}_5$, then $G$ cannot fix any point or triangle, since $G$ contains both elements of order $3$ and elementary abelian $2$-subgroups of order $4$. If $p=3$ and $G\cong{\rm A}_6$, then $G$ fixes no points nor triangles as $G$ contains ${\rm A}_5$.
\end{proof}

\subsection{$G$ is a subgroup of ${\rm A}_7$, when $q$ is an odd power of $p=5$}

\begin{proposition} \label{a7gen}
Let $q=5^n$ be an odd power of $p=5$ and let $H$ be a subgroup of $\PGU(3,q)$ isomorphic to the alternating group ${\rm A}_7$. 
If $G$ is a subgroup of $H$ such that $G$ has no fixed points or triangles in $\PG(2,q^6)$, then the genus of $\cH_q/G$ is one of the following:
\begin{equation}\label{a7grossi}
\frac{q^2-106q+2665-720\beta}{5040},\quad \frac{q^2-46q+205}{720},\quad \frac{q^2-22q+229-48\beta}{336},
\end{equation}
\begin{equation}\label{a7piccoli}
 \frac{q^2-26q+105}{240},\quad \frac{q^2-16q+55}{120},\quad \frac{q^2-10q+25}{72},
\end{equation}
where
$$ \beta=\begin{cases} 0, & \textrm{if}\quad 7\mid(q+1), \\ 3, & \textrm{otherwise}, \end{cases} $$
and $G\cong{\rm A}_7$, $G\cong{\rm A}_6$, $G\cong\PSL(2,7)$, $G\cong{\rm A}_5\rtimes C_2$, $G\cong{\rm A}_5$, $G\cong SmallGroup(36,9)$, respectively.
Conversely, if $\bar g$ is one of the integers in Equations \eqref{a7grossi} and \eqref{a7piccoli}, then there exists a subgroup $G$ of $H$ such that $G$ has no fixed points or triangles and $g(\cH_q/G)=\bar g$.
\end{proposition}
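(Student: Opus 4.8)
The plan is to mimic the structure of the preceding propositions in this section: first list the subgroups $G$ of $H\cong{\rm A}_7$ having no fixed point or triangle in $\PG(2,q^6)$, and then compute $g(\cH_q/G)$ for each of them by the Riemann-Hurwitz formula together with Theorem \ref{caratteri}. By Remark \ref{rimarco} a subgroup of ${\rm A}_7$ with a cyclic normal subgroup fixes a point or a triangle, and a subgroup with a normal elementary abelian $2$-subgroup of order $4$ fixes a self-polar triangle (its three involutions are of type {\rm (A)} and, being pairwise commuting, share the vertices of such a triangle). A direct check in ${\rm A}_7$ (for instance with MAGMA) shows that the subgroups avoiding both situations are, up to conjugacy, exactly ${\rm A}_7$, ${\rm A}_6$, $\PSL(2,7)$, ${\rm A}_5\rtimes C_2\cong\PGL(2,5)$, ${\rm A}_5$, and $SmallGroup(36,9)$. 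For five of these the genus is already known: for $G\cong SmallGroup(36,9)$ and $G\cong{\rm A}_5$ from Propositions \ref{propohessian} and \ref{propopgl}; for $G\cong\PSL(2,7)$ from Proposition \ref{psl27}, which applies because $-7\equiv3\pmod5$ is a non-square and $n$ is odd, so $\sqrt{-7}\notin\mathbb{F}_q$; for $G\cong{\rm A}_6$ from Proposition \ref{gen720}; and for $G\cong{\rm A}_5\rtimes C_2$ from Equation \eqref{pgl3} with $\bar q=5$. I would specialize each of these values to the arithmetic of $q=5^n$ with $n$ odd (namely $3\mid(q+1)$, $4\mid(q-1)$, $6\mid(q+1)$, $5\nmid(q+1)$) and verify that it equals the corresponding entry of Equations \eqref{a7grossi} and \eqref{a7piccoli}. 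I would also record that each of the six groups fixes no point or triangle: ${\rm A}_5$ and $SmallGroup(36,9)$ by Propositions \ref{propopgl} and \ref{propohessian}, $\PSL(2,7)$ by Proposition \ref{psl27}, and ${\rm A}_6$, ${\rm A}_7$, ${\rm A}_5\rtimes C_2$ because each contains one of the previous groups.

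The only genuinely new computation is for $G=H\cong{\rm A}_7$. Here I would run through the conjugacy classes of ${\rm A}_7$ (which contains $105$ involutions, $70+280$ elements of order $3$, $630$ of order $4$, $504$ of order $5$, $210$ of order $6$, and $720$ of order $7$) and determine the type of each nontrivial element through Lemma \ref{classificazione}. Since $2\mid(q+1)$, the involutions are of type {\rm (A)}. Elements of order $3$ and $6$ lie in dihedral subgroups of ${\rm A}_7$ of order $6$ and $12$, so each of them is normalized by an inverting involution; hence it is neither a homology (a homology commutes with every element normalizing the cyclic group it generates) nor of type {\rm (B3)}, and as its order divides $q+1$ it is of type {\rm (B1)} with $i=0$. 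Elements of order $4$ have order dividing $q^2-1$ but not $q+1$ (because $4\mid q-1$), so they are of type {\rm (B2)} with $i=2$. Elements of order $5=p$ lie in dihedral subgroups of order $10$ and do not commute with an inverting involution, so they cannot be elations and are of type {\rm (D)} with $i=2$.

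The delicate point, and the step I expect to be the main obstacle, is the type of the elements $\sigma$ of order $7$: unlike every other case there is no involution of ${\rm A}_7$ normalizing $\langle\sigma\rangle$ (a dihedral group of order $14$ has an odd inverting involution, hence is not a subgroup of ${\rm A}_7$), so the dihedral argument is not available. Instead I would show directly that $\sigma$ is not a homology: if it were, with centre $P$ and axis $\ell$, then writing $\sigma=\diag(\omega,1,1)$ with $P=(1:0:0)$ and $\ell\colon X=0$, every collineation normalizing $\langle\sigma\rangle$ fixes the fixed locus $\{P\}\cup\ell$ of $\sigma$, is therefore block diagonal, and hence commutes with $\sigma$; this contradicts $N_{{\rm A}_7}(\langle\sigma\rangle)\cong C_7\rtimes C_3$ properly containing $C_{{\rm A}_7}(\sigma)=\langle\sigma\rangle$. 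Since $q=5^n$ with $n$ odd gives $q\not\equiv1\pmod7$, Lemma \ref{classificazione} then forces $\sigma$ to be of type {\rm (B1)} with $i=0$ when $7\mid(q+1)$, and of type {\rm (B3)} with $i=3$ when $7\nmid(q+1)$ (in which case $7\mid(q^2-q+1)$); in both cases $i(\sigma)=\beta$. Feeding the resulting different-divisor degree $\Delta=105(q+1)+630\cdot2+504\cdot2+720\beta$ into $q^2-q-2=2520\,(2g(\cH_q/H)-2)+\Delta$ yields $g(\cH_q/H)=(q^2-106q+2665-720\beta)/5040$, the first value of Equation \eqref{a7grossi}. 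For the converse, the six subgroups above are subgroups of $H$ without fixed points or triangles realizing exactly the six values in Equations \eqref{a7grossi} and \eqref{a7piccoli}, which completes the argument.
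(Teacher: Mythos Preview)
Your argument is correct and follows the paper's own approach almost verbatim: list the six subgroup types of ${\rm A}_7$ without a normal cyclic or Klein-four subgroup, determine element types via the dihedral/normalizer tricks, and apply Riemann--Hurwitz with Theorem~\ref{caratteri}. Your treatment of the order-$7$ elements (ruling out a homology because $N_{{\rm A}_7}(\langle\sigma\rangle)\cong C_7\rtimes C_3$ does not centralize $\sigma$) is exactly the paper's ``semidirect product of order $21$ but not cyclic'' argument, stated with a bit more detail.

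One small caveat: invoking Equation~\eqref{pgl3} for $G\cong{\rm A}_5\rtimes C_2\cong\PGL(2,5)$ presupposes that this copy of $S_5\leq{\rm A}_7$ sits inside the conic-preserving $\PGL(2,q)$ of Proposition~\ref{propopgl}, which you have not checked. This is harmless here, because your element-type analysis for ${\rm A}_7$ already pins down every order occurring in $S_5$ (25 involutions of type (A), 20 elements of order $3$ and 20 of order $6$ of type (B1), 30 of order $4$ of type (B2), 24 of order $5$ of type (D)); feeding these directly into Riemann--Hurwitz gives $\Delta=25(q+1)+60+48$ and hence $(q^2-26q+105)/240$, matching \eqref{a7piccoli} without appealing to Proposition~\ref{propopgl}. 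The paper does it this latter way.
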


\begin{proof}
By direct checking, either $G\cong{\rm A}_7$, $G\cong{\rm A}_6$, $G\cong\PSL(2,7)$, $G\cong{\rm A}_5\rtimes C_2$, $G\cong{\rm A}_5$, $G\cong SmallGroup(36,9)$, or $G$ a normal subgroup which is cyclic or of order $4$ and hence $G$ fixes a point or a triangle.
If $G\cong{\rm A}_6$ or $G\cong\PSL(2,7)$ or $G\cong{\rm A}_5$ or $G\cong SmallGroup(36,9)$, then $G$ has no fixed points or triangles and the genus of $\cH_q/G$ is computed, by Propositions \ref{psl27}, \ref{a6}, and \ref{propohessian}.

Elements of order $3$ (resp. $5$) are of type (B1) because they are contained in dihedral subgroups of order $6$ (resp. $10$).
Elements of order $6$ are of type (B1) because their squares are of type (B1).
If $7\mid(q+1)$, then elements of order $7$ are of type (B1), since they are contained in semidirect products of order $21$ but not in cyclic subgroups of order $21$.
Now Equations \eqref{a7grossi} and \eqref{a7piccoli} follow from the Riemann-Hurwitz formula and Theorem \ref{caratteri}.

If $G\cong{\rm A}_5 \rtimes C_2$ or $G=H$, then $G$ has no fixed points or triangles because $G$ contains ${\rm A}_5$.
\end{proof}

\subsection{$G$ is a subgroup of $\PGU(3,p^m)$ with $m\mid n$ and $n/m$ odd}\label{secPSU}

Let $G$ be a subgroup of $\PGU(3,p^m)$ with $m\mid n$ and $n/m$ odd such that $G$ does not fix any point or triangle in $\PG(2,q^6)$.
By Theorem \ref{MaximalSubgroupsPGU} and Proposition \ref{36ndispari}, either $G\cong SmallGroup(36,9)$ when $q$ is an odd power of $2$; or $G\cong\PSU(3,p^k)$ or $G\cong\PGU(3,p^k)$ where $k\mid m$ and $m/k$ is odd.

\begin{proposition}
Let $q=p^n$ be a power of a prime $p$, and $G$ be a subgroup of $\PGU(3,q)$ such that either
\begin{itemize}
\item[(i)] $q$ is an odd power of $2$ and $G=G_1\cong SmallGroup(36,9)$; or
\item[(ii)] $G=G_2\cong\PGU(3,p^k)$ with $k\mid n$ and $n/k$ odd; or
\item[(iii)] $G=G_3\cong\PSU(3,p^k)$ with $3\mid(q+1)$, $k\mid n$ and $n/k$ odd.
\end{itemize}
Then the genus of $\cH_q/G$ is the following:
\begin{equation}\label{369genere} g(\cH_q/G_1)=\frac{q^2-10q+16}{72};
\end{equation}
\begin{equation}\label{sottoPGUgenere}
g(\cH_q/G_2)= 1+\frac{q^2-q-2-\Delta}{2\bq^3(\bq^3+1)(\bq^2-1)},
\end{equation}
where
$$ \Delta= (\bar q -1)(\bar q^3+1)\cdot(q+2) + (\bar q^3-\bar q)(\bar q^3+1)\cdot2 + \bar q(\bq^4-\bq^3+\bq^2)\cdot(q+1)$$
$$ + (\bq^2-\bq-2)\frac{(\bq^3+1)\bq^3}{2}\cdot2 + (\bq-1)\bq(\bq^3+1)\bq^2\cdot1 + (\bq^2-\bq)\frac{\bq^6+\bq^5-\bq^4-\bq^3}{3}\cdot\gamma, $$
with
$$ \gamma= \begin{cases} 3, & \textrm{if} \quad(\bq^2-\bq+1)\mid(q^2-q+1)\;\textrm{ and }\;\bq\ne2, \\
0, & \textrm{if} \quad(\bq^2-\bq+1)\mid(q+1)\;\textrm{ and }\;\bq\ne2,\\
3, & \textrm{if} \quad \bq=2\;\textrm{and}\;3\mid(q+1)\;\textrm{and}\;3\nmid n,\\
0, & \textrm{if} \quad \bq=2\;\textrm{and}\;3\mid(q+1)\;\textrm{and}\;3\mid n;\\
\end{cases} $$
\begin{equation}\label{sottoPSUgenere}
g(\cH_q/G_3)=\frac{3(q^2-q-2-\Delta)}{2{\bar q}^{3}({\bar q}^{2}-1)({\bar q}^{3}+1)}+1,
\end{equation}
where 
$$ \Delta= (\bar q -1)(\bar q^3+1)\cdot(q+2) + (\bar q^3-\bar q)(\bar q^3+1)\cdot2 + ((\bar q+1)/3-1)(\bq^4-\bq^3+\bq^2)\cdot(q+1)$$
$$ + ((\bar q^2-1)/3-(\bar q+1)/3)\frac{(\bar q^3+1)\bar q^3}{2}
\cdot2 + (\bq-1)((\bq+1)/3-1)(\bq^3+1)\bq^2 \cdot 1 +((\bar q^2-\bar q+1)/3-1)\frac{\bar q^6+\bar q^5-\bar q^4-\bar q^3}{3}\cdot\delta, $$
with 
$$\delta=\begin{cases}3, \ if \ (\bq^2-\bq+1)/3 \mid(q^2-q+1),\\ 0, \ if \ (\bq^2-\bq+1)/3 \mid(q+1). \end{cases}$$
\end{proposition}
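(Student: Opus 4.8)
The common strategy in all three cases is to read off $g(\cH_q/G)$ from the Riemann--Hurwitz formula
$$2g(\cH_q)-2=|G|\bigl(2g(\cH_q/G)-2\bigr)+\Delta,\qquad \Delta=\sum_{\sigma\in G\setminus\{id\}}i(\sigma),$$
evaluating each $i(\sigma)$ by Theorem~\ref{caratteri}. Since that theorem expresses $i(\sigma)$ through $\ord(\sigma)$ and through the type of $\sigma$ with respect to $\cH_q$ in the sense of Lemma~\ref{classificazione}, the whole problem splits into two parts: (a) determine the order statistics of $G$, i.e.\ the number of elements of each order grouped into $\PGU(3,q)$-conjugacy classes; and (b) decide, for each such class, which of the types (A), (B1), (B2), (B3), (C), (D), (E) its elements have \emph{for $\cH_q$}. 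Once (a) and (b) are settled, $\Delta$ becomes an explicit polynomial in $q$ (and $\bq$), and solving the displayed identity for $g(\cH_q/G)$ gives the result.

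For case~(i) I would use $G\cong SmallGroup(36,9)\cong (C_3\times C_3)\rtimes C_4$: thus $G$ has $9$ involutions, $18$ elements of order $4$, and the normal subgroup $C_3\times C_3$ consisting of the $8$ elements of order $3$ together with the identity. Since $p=2$, Lemma~\ref{classificazione} forces the involutions to be elations, of type~(C) with $i=q+2$, and the elements of order $4$ to be of type~(D) with $i=2$. For the elements of order $3$ the key claim is that $C_3\times C_3$ is \emph{not} conjugate into the pointwise stabiliser $C_{q+1}\times C_{q+1}$ of a self-polar triangle; otherwise $G$ would lie inside the triangle stabiliser $M_3$, of order $6(q+1)^2$, which is impossible since $4\mid|G|$ while $4\nmid 6(q+1)^2$ for $q$ even. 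Hence $C_3\times C_3$ is the non-toral elementary abelian $3$-subgroup of $\PGU(3,q)$; it is defined over $\mathbb{F}_{q^2}$ and all of its nontrivial elements have characteristic polynomial $X^3-1$, so each fixes a self-polar triangle with $\mathbb{F}_{q^2}$-rational vertices lying off $\cH_q$ and is of type~(B1) with $i=0$. Then $\Delta=9(q+2)+18\cdot 2$, and Riemann--Hurwitz gives $g(\cH_q/G_1)=(q^2-10q+16)/72$.

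For cases~(ii) and~(iii) the geometric core is that, since $h:=n/k$ is odd, the inclusion $G\le\PGU(3,q)$ can be realised so that $\cH_{\bq}(\mathbb{F}_{\bq^2})\subseteq\cH_q(\mathbb{F}_{q^2})$ and the Hermitian polarities attached to $\cH_{\bq}$ and $\cH_q$ agree on $\mathbb{F}_{\bq^2}$-rational points (essentially the construction in the proof of Lemma~\ref{lemma1sottoPSU}, since $x^{q}=x^{\bq}$ for $x\in\mathbb{F}_{\bq^2}$). Therefore, for $\sigma\in G$ the type of $\sigma$ with respect to $\cH_q$ coincides with its type with respect to $\cH_{\bq}$ whenever $\sigma$ is of type (A), (B1), (B2), (C), (D) or (E), and the corresponding $i(\sigma)$ is then obtained from Theorem~\ref{caratteri} \emph{using the large~$q$}. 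The only classes requiring care are the type-(B3) elements of $\cH_{\bq}$: such a $\sigma$ fixes a Frobenius triangle whose vertices lie in $\mathbb{F}_{\bq^6}$, and $\mathbb{F}_{\bq^6}\subseteq\mathbb{F}_{q^2}$ exactly when $3\mid h$; so these elements remain of type~(B3) for $\cH_q$ (with $i=3$) when $3\nmid h$, equivalently $(\bq^2-\bq+1)\mid(q^2-q+1)$, and become of type~(B1) (with $i=0$) when $3\mid h$, equivalently $(\bq^2-\bq+1)\mid(q+1)$ --- precisely the alternative encoded by $\gamma$ and $\delta$. The value $\bq=2$ (the Hessian group $\PGU(3,2)$) I would handle separately, since there the ``type-(B3) class'' consists of order-$3$ elements and one checks directly that the flip to type~(B1) occurs exactly when $3\mid n$.

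It then remains to insert the conjugacy-class statistics of $\PGU(3,\bq)$ and of its index-$3$ subgroup $\PSU(3,\bq)$ (as recorded e.g.\ in \cite{M,H} and the classical references on unitary groups): the $(\bq-1)(\bq^3+1)$ elations; the $(\bq^3-\bq)(\bq^3+1)$ non-elation unipotents (of order $p$, or of order $4$ when $p=2$); the homologies with fixed centre, filling a cyclic group of order $\bq+1$ over each of the $\bq^2(\bq^2-\bq+1)$ points off $\cH_{\bq}$ for $G=\PGU(3,\bq)$ and of order $(\bq+1)/3$ for $G=\PSU(3,\bq)$; the type-(B2) elements inside the $\frac{\bq^3(\bq^3+1)}{2}$ tori $C_{\bq^2-1}$, respectively their index-$3$ subgroups; the type-(E) elements fixing a point of $\cH_{\bq}$ and a point off it; and the type-(B3) elements inside the $\frac{\bq^3(\bq-1)(\bq+1)^2}{3}$ Singer tori $C_{\bq^2-\bq+1}$, respectively their index-$3$ subgroups. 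Weighting every class size by the appropriate $i(\sigma)$ and adding produces the displayed $\Delta$, and Riemann--Hurwitz then yields $g(\cH_q/G_2)$ and $g(\cH_q/G_3)$. I expect the main obstacle to be exactly this last part carried out uniformly: having the complete conjugacy-class lists of $\PGU(3,\bq)$ and $\PSU(3,\bq)$ with correct orders, and then correctly deciding the type \emph{for $\cH_q$ rather than for $\cH_{\bq}$} --- the B3$\leftrightarrow$B1 transition controlled by the divisibility of $\bq^2-\bq+1$ (and of $(\bq^2-\bq+1)/3$ in the $\PSU$ case), the separate treatment of $\bq=2$, and the systematic division by $3$ of the torus orders $\bq+1$, $\bq^2-1$, $\bq^2-\bq+1$ when passing to $\PSU(3,\bq)$; the ensuing simplification of $\Delta$ and extraction of the genus is then routine.
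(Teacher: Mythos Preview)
Your proposal is correct and follows essentially the same strategy as the paper: both compute $\Delta$ via Theorem~\ref{caratteri} after determining the type (A)--(E) of each element of $G$ with respect to $\cH_q$, using the conjugacy-class statistics of $\PGU(3,\bq)$ and $\PSU(3,\bq)$ and the compatibility of the embeddings $\cH_{\bq}\hookrightarrow\cH_q$. The only noteworthy differences are local: for case~(i) you rule out type~(A) for the order-$3$ elements by a ``non-toral'' order argument on $M_3$, whereas the paper simply observes they lie in dihedral subgroups of order~$6$; and for the (B3)$\leftrightarrow$(B1) transition in cases~(ii)--(iii) you argue via the inclusion $\mathbb{F}_{\bq^6}\subseteq\mathbb{F}_{q^2}$ and the agreement of the two unitary polarities on $\mathbb{F}_{\bq^2}$-points, while the paper argues by order together with the fact that the Singer normaliser $C_{(\bq^2-\bq+1)}\rtimes C_3$ does not centralise its Singer subgroup (ruling out type~(A))---both routes are valid and lead to the same $\gamma,\delta$ dichotomy.
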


\begin{proof}
Suppose that $G=G_1$. Elements of order $3$ are of type (B1), since they are contained in dihedral subgroups of order $6$.
Then Equation \eqref{369genere} follows from the Riemann-Hurwitz formula and Theorem \ref{caratteri}.

Suppose that $G=G_2$. Then the genus of $\cH_q/G_2$ has been computed in \cite[Proposition 5.1]{MZ2}, and Equation \eqref{sottoPGUgenere} follows.

Suppose that $G=G_3$. As pointed out in Lemma \ref{lemma1sottoPSU}, we can assume up to conjugation in $\PGU(3,q)$ that the $G$ is the subgroup of elements $\sigma \in \PSU(3,q)$ such that $\sigma$ is defined over $\mathbb{F}_{\bar q^2}$.
First, we classify the elements of $\PSU(3,\bar q)$ seen as the automorphism group of a Hermitian curve $\cH_{\bar q}$, using the order statistics of $\PSU(3,\bar q)$.
\begin{enumerate}

\item There are exactly $(\bar q -1)({\bar q}^3+1)$ elements of type (C).
In fact, for each $P\in\cH_{\bar q}(\mathbb F_{{\bar q}^2})$, there exist exactly $\bar q-1$ elations in $\PSU(3,\bar q)$ with center $P$.

\item There are exactly $(\bar q^3-\bar q)(\bar q^3+1)$ elements of type (D). In fact, they are the $p$-elements of $\PSU(3,\bar q)$ which are not of type (C).

\item There are exactly $((\bar q+1)/3-1)(\bar q^4-\bar q^3+\bar q^2)$ elements of type (A). In fact, for each $P\in \PG(2,\bar q^2)\setminus\cH_{\bar q}$ there exist exactly $(\bar q+1)/3-1$ homologies in $\PSU(3,\bar q)$ with center $P$.

\item There are exactly $((\bar q^2-1)/3-(\bar q+1)/3)\frac{(\bar q^3+1)\bar q^3}{2}$ elements of type (B2). In fact, for each pair $\{P,Q\}\subset\cH_{\bar q}(\mathbb F_{\bar q^2})$ there exist exactly $(\bar q^2-1)/3-1$ nontrivial elements of $\PSU(3,\bar q)$ fixing $P$ and $Q$, and $(\bar q+1)/3-1$ of them are homologies with axis $PQ$.

\item There are exactly $((\bar q^2-\bar q+1)/3-1)\frac{\bar q^6+\bar q^5-\bar q^4-\bar q^3}{3}$ elements of type (B3). In fact, any point $P\in\cH_{\bar q^2}(\mathbb{F}_{\bar q^6})\setminus\cH_{\bar q}(\mathbb{F}_{\bar q^2})$ determines a unique triangle $\{P,\Phi_{\bar q^2}(P),\Phi_{\bar q^2}^2(P)\}\subset\cH_{\bar q}(\mathbb{F}_{\bar q^6})$ fixed by a Singer subgroup of order $(\bar q^2-\bar q+1)/3$; see \cite{MZ}.

\item There are exactly $(\bq-1)((\bq+1)/3-1)(\bq^3+1)\bq^2$ elements of type (E). 
In fact, consider a pair $\{P,Q\}$ with $P\in\cH_{\bar q}(\mathbb F_{\bar q^2})$ and $Q\in \PG(2,\bar q^2)\cap \ell_P$, where $\ell_P$ is the tangent line to $\cH_{\bar q}$ at $P$. Any element of type (E) fixing $P$ and $Q$ is uniquely obtained as the product of an elation of center $P$ and a homology of center $Q$; thus there exist exaclty $(\bq-1)((\bq+1)/3-1)$ such elements.

\item The remaining $(q^8-3q^7+8q^6-11q^5+9q^4-4q^3)/18=q^3(q-1)(q^2-q+1)(q^2-q+4)/18$ nontrivial elements are of type (B1).
\end{enumerate}
Now we describe the elements in each class (1) -- (7) according to their geometry with respect to $\cH_q$.
\begin{itemize}
\item[(i)] The elements in class (1) are of type (C).
In fact, let $\bar S$ be one of the $\bq^3+1$ Sylow $p$-subgroups of $\PSU(3,\bq)$ and $S$ be the Sylow $p$-subgroup of $\PSU(3,q)$ containing $\bar S$.
Note that $S$ is a trivial intersection set, since $\cH_q$ has zero $p$-rank; see \cite[Theorem 11.133]{HKT}.
Consider the explicit representation of $S$ given in \cite[Section 3]{GSX}, where $\cH_q$ has norm-trace equation and $S$ fixes the point at infinity.
By direct computation, there are $\bq$ elements of $\bar S$ in the center of $S$. Thus, $\bar S$ has exactly $\bq-1$ elements of type (C); see \cite[Equation (2.12)]{GSX}.
\item[(ii)] The elements in class (2) are of type (D).
In fact, with $\bar S$ as in Case (i), the claim follows from Case (i) counting the remaining nontrivial elements of $\bar S$.

\item[(iii)] Let $\sigma\in \PGU(3,\bq)$ be in class (3). 
Then $\sigma$ is contained in the pointwise stabilizer $D$ of a self-polar triangle with respect to $\cH_\bq$ and $D$ is an abelian group of order $(\bar q+1)^2/3$; see \cite{M}.
Let $C_{q+1}\times C_{q+1}\leq \PGU(3,q)$ be the poitwise stabilizer of a self-polar triangle $\mathcal{T}$ with respect to $\cH_q$, such that $D\leq (C_{q+1}\times C_{q+1})\cap \PSU(3,q)$ which is an abelian group of order $(q+1)^2/3$.

Up to conjugation, $\cH_q$ has Fermat equation $X^{q+1}+Y^{q+1}+T^{q+1}=0$ and $\mathcal{T}$ is the fundamental triangle, so that
$$ D=\{(X,Y,T)\mapsto(\lambda X,\mu Y,T)\mid \lambda^{\bq+1},\mu^{\bq+1}=1\} \cap \PSU(3,q). $$

By direct computation, $D$ contains $3((\bq+1)/3-1)$ elements of type (A) and $(\bq^2-\bq+4)/3$ elements of type (B1).

Since $\PSU(3,\bq)$ is transitive on $\PG(2,\bq^2)\setminus\cH_{q}$, there are exactly $\frac{\left|\PSU(3,\bq)\right|}{6|D|}$ self-polar triangles $\mathcal{T}^\prime$ with respect to $\cH_q$, whose pointwise stabilizer $D^\prime$ is conjugated to $D$ under $\PSU(3,\bq)$.

Note that $D$ and $D^\prime$ intersect non-trivially if and only if $\mathcal{T}$ and $\mathcal{T}^\prime$ have a vertex $P$ in common; in this case, $(D\cap D^\prime)\setminus\{id\}$ is made by $(\bq+1)/3-1$ homologies with center $P$.
The number of points in $\PG(2,\bq^2)\setminus\cH_{q}$ lying on the polar of $P$ is $\bq^2-\bq$; hence, the number of $D^\prime$ which intersect $D$ non-trivially is $((\bq+1)/3-1)(\bq-1)/2$.

Therefore, by direct computation, the number of elements of type (A) in $\PSU(3,\bq)$ is exactly the number $((\bq+1)/3-1)(\bq^4-\bq^3+\bq^2)$ of elements in class (3), and the remaining $(q^8-3q^7+8q^6-11q^5+9q^4-4q^3)/18$ elements of the subgroups of $\PSU(3,\bq)$ conjugated to $D$ are of type (B1) and in class (7).

\item[(iv)] Let $\sigma\in \PSU(3,\bq)$ be in class (4).
Since the order $o(\sigma)$ of $\sigma$ divides $\bq^2-1$ but not $\bq+1$, we have that $o(\sigma)$ divides $q^2-1$ but not $q+1$, as $q$ is an odd power of $\bq$.
Therefore $\sigma$ is of type (B2).

\item[(v)] Let $\sigma\in \PSU(3,\bq)$ be in class (6).
Since the order of $\sigma$ is $p\cdot d$ where $d>1$ and $p\nmid d$, $\sigma$ is of type (E).

\item[(vi)] Let $\sigma\in \PSU(3,\bq)$ be in class (5).
By direct checking, the order $o(\sigma)$ of $\sigma$ divides either $q^2-q+1$ or $q+1$.
Note that $(\bq^2-\bq+1)/3$ is not divisible by $3$. If $(\bq^2-\bq+1) \mid (q^2-q+1)$ then every $\sigma$ in class (5) is of type (B3), by \cite{MZ}. 

If $(\bq^2-\bq+1) \mid (q+1)$ then $\sigma$ is either of type (A) or (B1). We note that $\sigma$ is contained in the maximal subgroup (iv) of $\PSU(3,\bq)$ in \cite{M}, which is a semidirect product $C_{(\bq^2-\bq+1)/3} \rtimes C_3$, where $\sigma \in C_{(\bq^2-\bq+1)/3}$ and $C_3$ does not commute with any subgroup of $C_{(\bq^2-\bq+1)/3}$. Therefore $\sigma$ cannot be of type (A), because otherwise every element which normalizes $\sigma$ should commute with $\sigma$.
Then $\sigma$ of type (B1).
\end{itemize}
Now Equation \eqref{sottoPSUgenere} follows from the Riemann-Hurwitz formula and Theorem \ref{caratteri}.
\end{proof}

\section{The geometry of subgroups of $\PGU(3,q)$}\label{sec:geometria}

The following result lists the subgroups of $\PGU(3,q)$ which do not fix a point or a triangle in $\PG(2,q^6)$.
It follows as a corollary of Theorem \ref{MaximalSubgroupsNotFixingPSU} and the analysis ruled out in Section \ref{sec:sottogruppimassimaliPGU}.

\begin{theorem}\label{SubgroupsNotFixingPGU}
Let $q=p^n$ be a prime power. The subgroups of $\PGU(3,q)$ which do not fix a point or a triangle are the following.
\begin{enumerate}
\item $\PSU(3,p^m)$, when $m\mid n$ and $n/m$ is odd.
\item $\PGU(3,p^m)$, when $m\mid n$ and $n/m$ is odd.

{\rm For $p>2$, also the following subgroups.}
\item The Hessian groups of order $216$, $72$, and $36$; they are isomorphic to $\PGU(3,2)$, $\PSU(3,2)$, and $SmallGroup(36,9)$, respectively.
\item $\PGL(2,q)$ fixing a conic.
\item $\PSL(2,7)$, when $p=7$ or $-7$ is not a square in $\mathbb{F}_{q}$.
\item The alternating group $A_6$, when $p=3$ and $n$ is even, or $5$ is a square in $\mathbb{F}_q$ and $\mathbb{F}_q$ contains no primitive cube roots of unity.
\item A group of order $720$ containing the alternating group $A_6$ when $p=5$ and $n$ is odd, isomorphic to $SmallGroup(720,765)$ in the GAP notation.
\item The alternating group $A_7$, when $p=5$ and $n$ is odd.

{\rm When $q$ is an odd power of $2$, also a group of order $36$ isomorphic to $SmallGroup(36,9)$ in the GAP notation.}
\end{enumerate}
\end{theorem}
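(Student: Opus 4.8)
The plan is to obtain the list by combining three inputs that are by now available: the classification of the maximal subgroups of $\PSU(3,q)$ fixing no point or triangle (Theorem \ref{MaximalSubgroupsNotFixingPSU}), the classification of the maximal subgroups of $\PGU(3,q)$ (Theorem \ref{MaximalSubgroupsPGU}), and the subgroup analyses of Section \ref{sec:generi}; these are glued together by an induction on $n$, where $q=p^n$.

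First I would set up the reduction. Let $G\leq\PGU(3,q)$ have no invariant point or triangle in $\PG(2,q^6)$; by Remark \ref{rimarco} this forces $G$ to be nontrivial with no cyclic normal subgroup. Choose a maximal subgroup $M$ of $\PGU(3,q)$ with $G\leq M$: if $3\nmid(q+1)$ then $\PGU(3,q)=\PSU(3,q)$ and $M$ is a maximal subgroup of $\PSU(3,q)$, while if $3\mid(q+1)$ then $M$ is one of the seven types of Theorem \ref{MaximalSubgroupsPGU}. The crucial point is that the point stabilizer $M_1$, the pole-polar stabilizer $M_2$, the self-polar-triangle stabilizer $M_3$ and the Frobenius-triangle stabilizer $M_4$ all leave invariant a fixed point or a fixed triangle, hence so does every one of their subgroups; since $G$ does not, $M$ cannot be of these four types. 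Therefore $M$ is either $\PSU(3,q)$ itself, or $\PGU(3,p^m)$ or $\PSU(3,p^m)$ with $m\mid n$ and $n/m$ an odd prime, or the Hessian group $H_{216}\cong\PGU(3,2)$, or one of the exceptional finite maximal subgroups of $\PSU(3,q)$ occurring in Theorem \ref{MaximalSubgroupsNotFixingPSU}.

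Next I would dispose of the two kinds of surviving maximal overgroups. If $M$ is one of the exceptional finite groups — a Hessian group, $\PGL(2,q)$, $\PSL(2,7)$, $A_6$, $A_7$, $SmallGroup(720,765)$, or $SmallGroup(36,9)$ — then Propositions \ref{propohessian}, \ref{propopgl}, \ref{psl27}, \ref{gen720}, \ref{a6}, \ref{a7gen}, together with Remark \ref{rimarco}, already list precisely which subgroups of $M$ fix no point or triangle, and the same propositions verify the converse, namely that each group on that list genuinely fixes no point or triangle. If $M=\PSU(3,q)$ with $G$ proper, I would repeat the reduction inside $\PSU(3,q)$: $G$ lies in a maximal subgroup of $\PSU(3,q)$, which again cannot be a point or triangle stabilizer, hence is one of the exceptional families of Theorem \ref{MaximalSubgroupsNotFixingPSU} or a copy of $\PSU(3,p^m)$ or $\PGU(3,p^m)$ with $n/m$ an odd prime. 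Finally, if $M\cong\PSU(3,p^m)$ or $M\cong\PGU(3,p^m)$ with $m<n$ — the cases addressed in Section \ref{secPSU} and Proposition \ref{36ndispari} — I would invoke the inductive hypothesis over $\mathbb{F}_{p^{2m}}$ and note that the list for $p^m$ embeds into the list for $p^n$: indeed $k\mid m$ with $m/k$ odd forces $k\mid n$ with $n/k$ odd, and the arithmetic side conditions attached to the exceptional families (such as $3\mid(q+1)$, $9\mid(q+1)$, $7\mid(q+1)$, $\sqrt{-7}\notin\mathbb{F}_q$, $p=5$ with odd exponent, $q$ an odd power of $2$) are inherited by subfields. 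The base cases $n=1$, $p\in\{2,3,5,7\}$ are settled directly by the classifications in \cite{M,H}.

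The step I expect to be the main obstacle is not a single computation but the bookkeeping of this inductive descent: one must check that the side conditions distinguishing the exceptional families transfer correctly between $\mathbb{F}_{p^m}$ and $\mathbb{F}_{p^n}$, so that no family is lost or spuriously introduced when passing to a subfield, and that the three Hessian groups (orders $216$, $72$ and $36$) are attached to the correct arithmetic regimes. Once Theorems \ref{MaximalSubgroupsNotFixingPSU} and \ref{MaximalSubgroupsPGU} and the propositions of Section \ref{sec:generi} are granted, the statement follows simply by assembling these pieces, which is why it is presented as a corollary.
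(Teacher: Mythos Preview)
Your proposal is correct and rests on the same ingredients as the paper's proof (Theorems \ref{MaximalSubgroupsNotFixingPSU} and \ref{MaximalSubgroupsPGU}, Remark \ref{rimarco}, and the subgroup analyses of Section \ref{sec:generi}), but the packaging is slightly different. The paper does not run an induction on $n$: instead it fixes $H\leq\PGU(3,q)$, looks at $H\cap\PSU(3,q)$, and uses Theorem \ref{MaximalSubgroupsNotFixingPSU} together with Remarks \ref{rimarco2} and \ref{rimarco3} to pin down this intersection; it then takes a maximal $M\leq\PGU(3,q)$ containing $H$, invokes Theorem \ref{MaximalSubgroupsPGU} to see that $M$ is either $\PSU(3,q)$, or $\PGU(3,p^m)$, or $H_{216}$, and reads off $H$ from the pair $(M,\,H\cap\PSU(3,q))$ in each case. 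Your inductive descent through subfield copies $\PGU(3,p^m)$ achieves the same end, but at the cost of the transfer bookkeeping you flag as the main obstacle; the paper sidesteps that bookkeeping entirely by working with $H\cap\PSU(3,q)$ once and for all. One point to tighten in your version: when you pass from $G\leq\PGU(3,q)$ to $G\leq\PGU(3,p^m)$ you are implicitly using that ``fixing no point or triangle'' relative to $\cH_q$ is equivalent to the same condition relative to $\cH_{p^m}$; this is true (the fixed locus of $G$ in $\PG(2,\bar{\mathbb{F}}_p)$ does not depend on the ambient $\PGU$, and the type classification of Lemma \ref{classificazione} is compatible across odd-degree subfield inclusions, as used in Section \ref{secPSU}), but it deserves a sentence.
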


\begin{proof}
Let $H$ be a proper subgroup of $\PGU(3,q)$ which does not leave invariant any point or triangle. Then from Theorem \ref{MaximalSubgroupsNotFixingPSU} and Remarks \ref{rimarco2} and \ref{rimarco3}, $H \cap \PSU(3,q)$ is isomorphic to one of the following groups.
\begin{itemize}
\item[-] For $p>2$:
\item the Hessian groups of order $216$ when $9\mid(q+1)$, and of order $72$ or $36$ when $3\mid(q+1)$; or
\item[(i)] ${\rm PGL}(2,q)$ preserving a conic; or
\item[(ii)] ${\rm PSL(2,7)}$ when $p=7$ or $-7$ is not a square in $\mathbb{F}_q$; or
\item[(iii)] $SmallGroup(720,765)$ when $p=5$ and $n$ is odd; or
\item[(iv)] ${\rm A}_6$ when either $p=3$ and $n$ is even, or $5$ is a square in $\mathbb{F}_q$ but $\mathbb{F}_q$ contains no primitive cube roots of unity; or
\item[(v)] ${\rm A}_7$ when $p=5$ and $n$ is odd; or
\item[(vi)] $\PSU(3,p^m)$ with $m\mid n$ and $n/m$ odd; or
\item[(vii)] $\PGU(3,p^m)$ with $m\mid n$, $n/m$ is odd, and $3$ divides both $n/m$ and $q+1$.
\item[-] For $p=2$:
\item[(viii)] $\PSU(3,p^m)$ with $m\mid n$ and $n/m$ odd; or
\item[(ix)] $\PGU(3,p^m)$ with $m\mid n$, $n/m$ is odd, and $3$ divides both $n/m$ and $q+1$; or
\item[(x)] $SmallGroup(36,9)$ when $n$ is odd.
\end{itemize}
Also, $H$ is contained in a maximal subgroup $M$ of $\PGU(3,q)$. From Theorem \ref{MaximalSubgroupsPGU}, $M\cong\PSU(3,q)$ unless one of the following cases holds: $H \cap \PSU(3,q)$ is isomorphic to the Hessian groups of order $72$ or $36$, and $M \cong H_{216}$; or $H \cap \PSU(3,q) \cong \PSU(3,p^m)$ where $m \mid n$ and $n \mid m \ne 3$ is odd, and $M \cong \PGU(3,p^m)$.

If $M\cong\PSU(3,q)$, then $H$ is isomorphic to one of the groups {\it (i)--(x)} from Theorem \ref{MaximalSubgroupsNotFixingPSU} and Remark \ref{rimarco3}.

If $H \cap \PSU(3,q) \cong \PSU(3,p^m)$, then either $H=M\cong\PGU(3,p^m)$ or $H \cong \PSU(3,p^m)$, since there are no proper subgroups of $\PGU(3,p^m)$ properly containing $\PSU(3,p^m)$.

If $H\cap\PSU(3,q)\cong H_{72}$, then either $H=M\cong H_{216}$ or $H \cong H_{72}$, since there are no proper subgroups of $H_{216}$ properly containing $H_{72}$.

If $H\cap\PSU(3,q)\cong H_{36}$, then $H=M\cong H_{216}$ or $H \cong H_{36}$, since the unique proper subgroup of $H_{216}$ properly containing $H_{36}$ is $H_{72}$, which is contained in $\PSU(3,q)$.
\end{proof}

\end{document}